\tikzset{individu/.style={draw,thick}}
\numberwithin{equation}{section}
\theoremstyle{plain}
\newtheorem{theorem}{Theorem}[section]
\newtheorem{corollary}[theorem]{Corollary}
\newtheorem{conjecture}[theorem]{Conjecture}
\newtheorem{lemma}[theorem]{Lemma}
\newtheorem{proposition}[theorem]{Proposition}
\theoremstyle{definition}
\newtheorem{definition}[theorem]{Definition}
\newtheorem{question}[theorem]{Question}
\theoremstyle{remark}
\newtheorem{remark}[theorem]{Remark}
\newcommand{\R}{\mathbb{R}}
\newcommand{\C}{\mathbb{C}}
\newcommand{\calD}{\mathcal{D}}
\newcommand{\calC}{\mathcal{C}}
\newcommand{\calB}{\mathcal{B}}
\newcommand{\Kel}{\mathrm{K}}
\newcommand{\Fel}{\mathrm{F}}
\renewcommand{\Re}{\operatorname{Re}}
\renewcommand{\Im}{\operatorname{Im}}
\DeclareMathOperator{\cotan}{\mathrm{cotan}}
\renewcommand{\tilde}[1]{\widetilde{#1}}
\renewcommand{\epsilon}{\varepsilon}
\renewcommand{\phi}{\varphi}
\DeclareMathOperator{\sgn}{\mathrm{sgn}}
\DeclareMathOperator{\cn}{\mathrm{cn}}
\DeclareMathOperator{\sn}{\mathrm{sn}}
\DeclareMathOperator{\dn}{\mathrm{dn}}
\let\sc\relax
\DeclareMathOperator{\sc}{\mathrm{sc}}
\DeclareMathOperator{\nc}{\mathrm{nc}}
\DeclareMathOperator{\ns}{\mathrm{ns}}
\DeclareMathOperator{\cs}{\mathrm{cs}}
\DeclareMathOperator{\ds}{\mathrm{ds}}
\DeclareMathOperator{\cd}{\mathrm{cd}}
\DeclareMathOperator{\sd}{\mathrm{sd}}
\DeclareMathOperator{\nd}{\mathrm{nd}}
\DeclareMathOperator{\am}{\mathrm{am}}
\title{Cube moves for $s$-embeddings and $\alpha$-realizations}
\author{Paul Melotti\footnote{Faculté des Sciences d'Orsay, Université Paris-Saclay,
    91405 Orsay Cedex, France.\hfill\break
  \textit{E-mail address}: \texttt{paul.melotti at
    universite-paris-saclay.fr}}
\and Sanjay Ramassamy\footnote{Université Paris-Saclay, CNRS, CEA, Institut de physique théorique, 91191 Gif-sur-Yvette, France.\hfill\break
  \textit{E-mail address}: \texttt{sanjay.ramassamy at ipht.fr}}
\and Paul Th\'evenin\footnote{Uppsala Universitet, Lägerhyddsvägen 1, 752 37 Uppsala, Sweden.\hfill\break
  \textit{E-mail address}: \texttt{paul.thevenin at math.uu.se}}
}
\date{\today}
\newcommand{\dbc}{\Upsilon^{\times}}
\begin{document}

\maketitle

\begin{abstract}
Chelkak introduced $s$-embeddings as tilings by tangential quads which provide the right setting to study the Ising model with arbitrary coupling constants on arbitrary planar graphs. We prove the existence and uniqueness of a local transformation for $s$-embeddings called the cube move, which consists in flipping three quadrilaterals in such a way that the resulting tiling is also in the class of $s$-embeddings. In passing, we give a new and simpler formula for the change in coupling constants for the Ising star-triangle transformation which is conjugated to the cube move for $s$-embeddings. We introduce more generally the class of $\alpha$-embeddings as tilings of a portion of the plane by quadrilaterals such that the side lengths of each quadrilateral $ABCD$ satisfy the relation $AB^\alpha+CD^\alpha=AD^\alpha+BC^\alpha$, providing a common generalization for harmonic embeddings adapted to the study of resistor networks ($\alpha=2$) and for $s$-embeddings ($\alpha=1$). We investigate existence and uniqueness properties of the cube move for these $\alpha$-embeddings.
\end{abstract}

\section{Introduction}

The star-triangle transformation was first introduced by Kennelly in the context of resistor networks \cite{Kennelly}, as a local transformation which does not change the electrical properties of the network (such as the equivalent resistance) outside of the location where the transformation is performed. It consists in replacing a vertex of degree $3$ by a triangle as on Figure~\ref{fig:st_ising}, and the conductances after the transformation are given as some rational functions of the conductances before the transformation. It follows from the classical connection between resistor networks and random walks \cite{Kakutani} that applying this star-triangle transformation to a graph with weights on the edges also preserves the probabilistic properties of the random walk on this graph. Conversely, a triangle can be made into a star, and the terminology ``star-triangle transformation'' is often used to refer to both of these
operations.

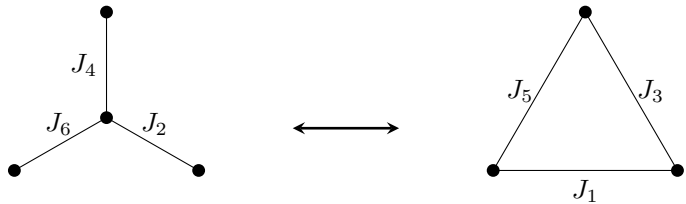
\begin{figure}[h]
  \centering
  \begin{tikzpicture}[scale=0.7]

\begin{scope}[xshift=9cm]
  \coordinate (hh) at (0 ,2) ;
  \coordinate (bd) at (1.732,-1) ;
  \coordinate (bg) at (-1.732,-1) ;
  \coordinate (hd) at ($(hh)+(bd)$) ;
  \coordinate (hg) at ($(hh)+(bg)$) ;
  \coordinate (bb) at ($(bd)+(bg)$) ;
  \node [draw=black, fill=black,thick,circle,inner sep=0pt,minimum size=4pt] at (hh) {};
  \node [draw=black, fill=black,thick,circle,inner sep=0pt,minimum size=4pt] at (bd) {};
  \node [draw=black, fill=black,thick,circle,inner sep=0pt,minimum size=4pt] at (bg) {};
  \draw (hh) -- (bg) -- (bd) -- (hh);
  \draw (-0.8,0.5) node [left] {$J_5$};
  \draw (0.8,0.5) node [right] {$J_3$};
  \draw (0,-1) node [below] {$J_1$};
\end{scope}

\begin{scope}[yshift=0cm]
  \begin{scope}[xshift=0cm]
  \coordinate (hh) at (0 ,2) ;
  \coordinate (bd) at (1.732,-1) ;
  \coordinate (bg) at (-1.732,-1) ;
  \coordinate (hd) at ($(hh)+(bd)$) ;
  \coordinate (hg) at ($(hh)+(bg)$) ;
  \coordinate (bb) at ($(bd)+(bg)$) ;
  \coordinate (cc) at ($(hd) + (bb)- (bd)$);
  \node [draw=black, fill=black,thick,circle,inner sep=0pt,minimum size=4pt] at (hh) {};
  \node [draw=black, fill=black,thick,circle,inner sep=0pt,minimum size=4pt] at (bd) {};
  \node [draw=black, fill=black,thick,circle,inner sep=0pt,minimum size=4pt] at (bg) {};
  \node [draw=black, fill=black,thick,circle,inner sep=0pt,minimum size=4pt] at (cc) {};
  \draw (hh) -- (cc);
  \draw (bg) -- (cc);
  \draw (bd) -- (cc);
  \draw (0.9,-0.5) node [above] {$J_2$};
  \draw (-0.9,-0.5) node [above] {$J_6$};
  \draw (0,1) node [left] {$J_4$};
\end{scope}
\begin{scope}[xshift=3.5cm, yshift=-0.2cm]
  \draw[>=stealth,<->, line width = 1pt] (0,0) -- (2,0);
\end{scope}
\end{scope}

\end{tikzpicture}
  \caption{Star-triangle transformation.}
  \label{fig:st_ising}
\end{figure}

Another probabilistic model known to possess a star-triangle transformation is the Ising model, a celebrated model of magnetization which samples a random configuration of spins living at the vertices
of a graph; this property appears in \cite{Onsager,Wannier}, see also Chapter~6 of \cite{Baxter:exactly}. The probability distribution of this configuration depends on coupling constants attached to the edges of the graph. When these coupling constants are all positive (which is called the ferromagnetic regime), one can perform a local transformation of the graph as on Figure~\ref{fig:st_ising} without changing the correlations of spins outside of the location where the transformation is performed \cite{Onsager,Wannier}. Note however that the formulas relating the coupling constants before and after the transformation are not the same as those for the conductances in resistor networks.

Let $G$ be a planar graph, that is, a graph that can be embedded in the plane - or equivalently in the sphere. We denote by $G^\diamond$ its diamond graph, whose vertex set is composed of the
vertices and dual vertices of $G$ and whose faces are all
quadrilaterals, one for each edge of $G$ (see
Figure~\ref{fig:diamond}). We note that planar graphs are graphs that can be embedded in the plane but which do not come with a distinguished embedding. From now on we will assume that every planar graph $G$ is $3$-connected, which implies \cite{Whitney} that it possesses a unique embedding up to homeomorphisms of the sphere. In particular the faces of $G$ are well-defined. Hence its diamond graph $G^\diamond$ is well-defined, is also $3$-connected and has well-defined faces.

\begin{figure}[h]
  \centering
  \def\svgwidth{6cm}
  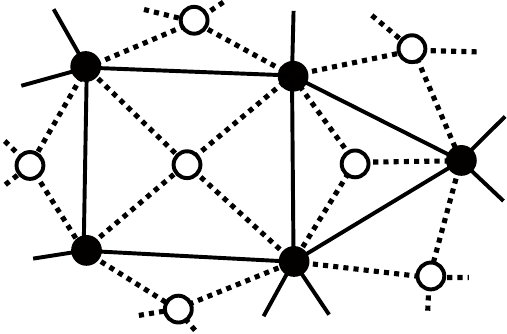
  \caption{A piece of a planar graph $G$ (black dots and solid
    lines), its dual vertices (white dots) and its diamond graph
    $G^{\diamond}$ (dotted lines).}
  \label{fig:diamond}
\end{figure}

To each of the two models described above (random walk and Ising model) is associated a class of graph embeddings. More precisely, if $G$ denotes a planar graph carrying positive edge weights (conductances for the random walk, coupling constants for the Ising model), one embeds in the plane its diamond graph $G^\diamond$.

The embeddings associated to random walks are called the Tutte embeddings or harmonic embeddings \cite{Tutte:emb} and have the property that the
faces of $G^\diamond$ are embedded as orthodiagonal quads, that is, their diagonals are perpendicular. The conductance of an edge of $G$ corresponding to an orthodiagonal quad embedding of a face of $G^\diamond$ is given by a ratio of the lengths of the diagonals of the quad. The embeddings associated to the ferromagnetic Ising model were introduced by Chelkak \cite{Chelkak,Chelkak2} under the name of $s$-embeddings; in this case the faces of $G^\diamond$ are embedded as tangential quads, which are quads admitting a circle tangential to the four sides. The coupling constant of an edge $e$ of $G$ corresponding to a tangential quad embedding of a face of $G^\diamond$ is given by
\[
  J_e = \frac12 \ln\left(\frac{1+\sin\theta_e}{\cos\theta_e}\right),
\]
where
\[
    \tan^2\theta_e = \frac{\cotan \delta + \cotan \beta}{\cotan \alpha
      + \cotan \gamma}.
\]
and $\alpha,\beta,\gamma,\delta$ denote the half-angles of the tangential quad, as shown on Figure~\ref{fig:1quad}. In the special case when the tangential quad is a rhombus, the angle $\theta_e$ arises as the half-angle of a corner of the quad corresponding to a primal vertex. In the general case $\theta_e$ does not seem to have a geometric realization as an angle.

\begin{figure}[h]
  \centering
  \def\svgwidth{10cm}
\begingroup%
  \makeatletter%
  \providecommand\color[2][]{%
    \errmessage{(Inkscape) Color is used for the text in Inkscape, but the package 'color.sty' is not loaded}%
    \renewcommand\color[2][]{}%
  }%
  \providecommand\transparent[1]{%
    \errmessage{(Inkscape) Transparency is used (non-zero) for the text in Inkscape, but the package 'transparent.sty' is not loaded}%
    \renewcommand\transparent[1]{}%
  }%
  \providecommand\rotatebox[2]{#2}%
  \newcommand*\fsize{\dimexpr\f@size pt\relax}%
  \newcommand*\lineheight[1]{\fontsize{\fsize}{#1\fsize}\selectfont}%
  \ifx\svgwidth\undefined%
    \setlength{\unitlength}{358.49132628bp}%
    \ifx\svgscale\undefined%
      \relax%
    \else%
      \setlength{\unitlength}{\unitlength * \real{\svgscale}}%
    \fi%
  \else%
    \setlength{\unitlength}{\svgwidth}%
  \fi%
  \global\let\svgwidth\undefined%
  \global\let\svgscale\undefined%
  \makeatother%
  \begin{picture}(1,0.26451463)%
    \lineheight{1}%
    \setlength\tabcolsep{0pt}%
    \put(0,0){\includegraphics[width=\unitlength,page=1]{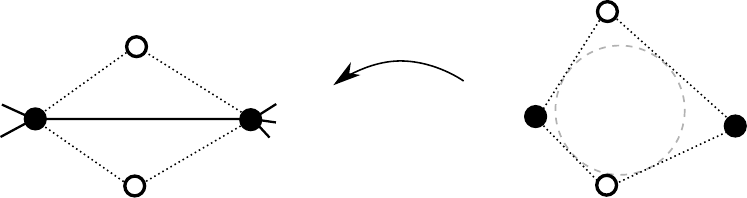}}%
    \put(0.14985405,0.11322556){\color[rgb]{0,0,0}\makebox(0,0)[lt]{\lineheight{1.25}\smash{\begin{tabular}[t]{l}$e$\end{tabular}}}}%
    \put(0,0){\includegraphics[width=\unitlength,page=2]{1emb.pdf}}%
    \put(0.79180727,0.17785592){\color[rgb]{0,0,0}\makebox(0,0)[lt]{\lineheight{1.25}\smash{\begin{tabular}[t]{l}$2\delta$\end{tabular}}}}%
    \put(0,0){\includegraphics[width=\unitlength,page=3]{1emb.pdf}}%
    \put(0.75553928,0.10291363){\color[rgb]{0,0,0}\makebox(0,0)[lt]{\lineheight{1.25}\smash{\begin{tabular}[t]{l}$2\alpha$\end{tabular}}}}%
    \put(0.89892303,0.09880417){\color[rgb]{0,0,0}\makebox(0,0)[lt]{\lineheight{1.25}\smash{\begin{tabular}[t]{l}$2\gamma$\end{tabular}}}}%
    \put(0.80031057,0.06667545){\color[rgb]{0,0,0}\makebox(0,0)[lt]{\lineheight{1.25}\smash{\begin{tabular}[t]{l}$2\beta$\end{tabular}}}}%
  \end{picture}%
\endgroup%

  \caption{Relation between an abstract weighted graph and an $s$-embedding.}
  \label{fig:1quad}
\end{figure}

Existence and uniqueness questions for both harmonic embeddings and $s$-embeddings may be asked either for finite weighted graphs or infinite weighted graphs periodic in two directions. Complete answers are known for harmonic embeddings in both cases and for $s$-embeddings in the periodic case, see \cite{KLRR,Chelkak2} for a discussion. However, such questions are not relevant for our purposes, as we will start from one given embedding and discuss whether or not there exists another embedding related by a local transformation.

To avoid unnecessary complications related to boundary issues in the case of finite graphs, we will always assume that the edges involved in our star-triangle transformations are not boundary edges. Note that one could consider a broader framework including such boundary edges, provided that one replaces the notion of a dual graph by that of the augmented dual graph, which has several dual vertices associated with the outer face, one between each pair of consecutive boundary vertices, see e.g. \cite{KLRR}.

A harmonic embedding or an $s$-embedding of the diamond graph $G^\diamond$ determines a drawing of $G$, since the vertices of $G$ form a subset of the vertices of $G^\diamond$. However a drawing of $G$ does not uniquely determine a harmonic embedding or an $s$-embedding.

Such embeddings provide the appropriate geometric setting to observe conformally invariant objects such as Brownian motion or SLE processes when taking the scaling limits of random walks and Ising models on generic planar graphs \cite{BergerBiskup,Chelkak,Chelkak2}. They
generalize isoradial embeddings (embeddings of the faces of $G^\diamond$ as rhombi), for which specific techniques can
lead to proofs of scaling limits (for instance in \cite{Kenyon:intro,
  ChelkakSmirnov1, ChelkakSmirnov2, GrimmettManolescu,
  BoutillierDeTiliereRaschel} and others),
but which correspond to specific choices of weights on an already
embedded graph.

\begin{figure}[h]
  \centering
  \def\svgwidth{6cm}
  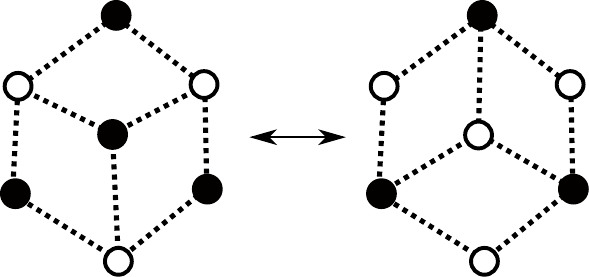
  \caption{The transformation on $G^{\diamond}$ induced by the
    star-triangle transformation on $G$: a \emph{cube move}.}
  \label{fig:cube_diamond}
\end{figure}

The star-triangle move for random walks (resp. the Ising model)
translates into a geometric local move for harmonic embeddings
(resp. for $s$-embeddings), whereby three orthodiagonal quads
(resp. tangential quads) sharing a vertex as on the left-hand side
picture of Figure~\ref{fig:cube_diamond} get erased
and replaced by three other orthodiagonal quads (resp. tangential
quads) with the same hexagonal outer boundary, as on the right-hand side picture of Figure~\ref{fig:cube_diamond}. We call this geometric local move a \emph{cube move}, phrase which was
originally introduced in \cite{KenyonPemantle_minors} only with a combinatorial meaning (and not a geometric one). We emphasize that cube moves for us are geometric local moves which apply to tilings of the plane by quads, while star-triangle transformations are combinatorial local moves which apply to graphs with edge weights.

The existence and uniqueness of the cube move for harmonic embeddings
follows from a classical theorem of planar geometry called Steiner's
theorem \cite{KS,KLRR}. In this article, we show the existence and uniqueness of the cube move for $s$-embeddings (see definitions in Section \ref{sec:def} and Theorem \ref{theo:1quad} for the exact statement).

\begin{figure}[h]
  \centering
  \def\svgwidth{11cm}
  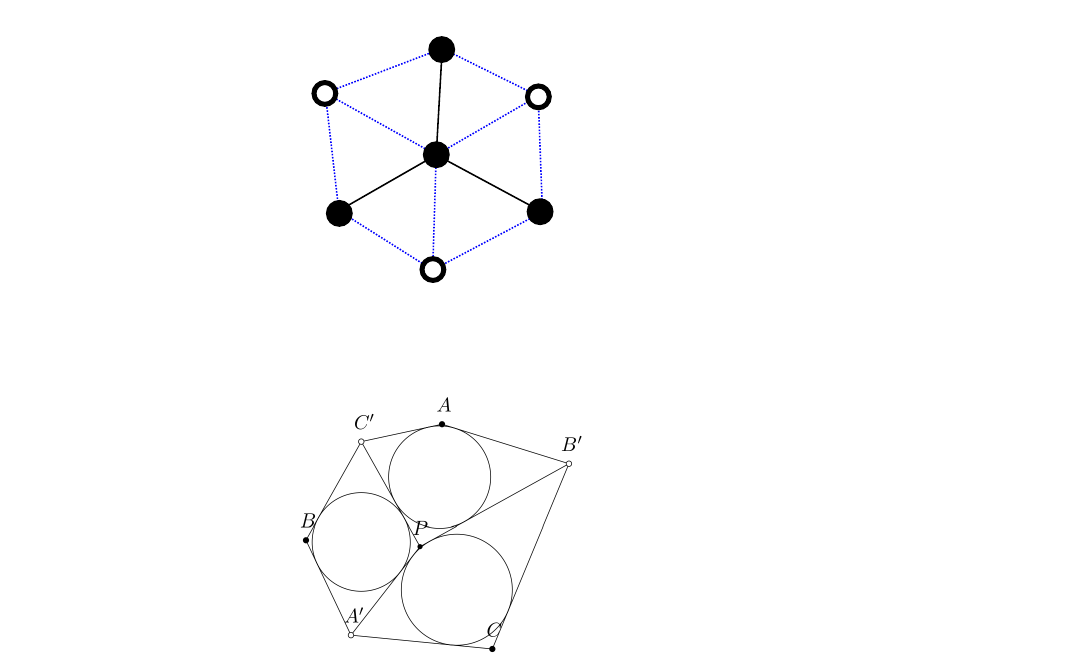
  \caption{The star-triangle transformation for the Ising model is
    conjugated to a local geometric transformation called a \emph{cube
      move}, via Chelkak's $s$-embeddings introduced in
    \cite{Chelkak,Chelkak2}. Notice that in the embedded graphs, only the
    central points $P$ and $P'$ differ.}
  \label{fig:diagram}
\end{figure}

\begin{theorem}
For any $s$-embedding with the combinatorics of the graph on the left-hand side of Figure~\ref{fig:cube_diamond}, erase the central black point and keep all the other points fixed. Then there exists a unique central white point that gives an $s$-embedding with the combinatorics of the graph on the right-hand side of Figure~\ref{fig:cube_diamond}.
\end{theorem}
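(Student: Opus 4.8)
The plan is to translate the tangentiality of all quadrilaterals into Pitot's theorem, reduce the sought point $w$ to an intersection of two hyperbola branches, and then treat existence and uniqueness separately; uniqueness will be clean, existence will be the delicate point.

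First I would fix notation: call the six fixed points $a,x,b,y,c,z$, read cyclically around the hexagonal boundary, with $a,b,c$ the primal (black) vertices and $x,y,z$ the dual (white) vertices. Then the three quadrilaterals of the ``star'' $s$-embedding are $vxaz$, $vxby$, $vycz$ ($v$ the erased point), and the three we wish to produce are $axbw$, $bycw$, $czaw$ ($w$ the point sought). Recall that a convex quadrilateral $PQRS$ is tangential iff $|PQ|+|RS|=|QR|+|SP|$. Writing this for the three ``star'' quadrilaterals and eliminating $|vx|,|vy|,|vz|$ yields the identity
\[
|ax|-|xb|+|by|-|yc|+|cz|-|za|=0,
\]
i.e.\ the alternating sum of the hexagon's side lengths vanishes. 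Writing Pitot for the three ``triangle'' quadrilaterals and using this identity, the three conditions on $w$ become equivalent to the \emph{two} conditions
\[
|wa|-|wb|=|ax|-|xb|,\qquad |wb|-|wc|=|by|-|yc|
\]
(the third is then automatic). Equivalently, $w$ must lie on the intersection of two hyperbola branches: the branch $H_1$ with foci $a,b$ through $x$, and the branch $H_2$ with foci $b,c$ through $y$ (and then automatically on the branch $H_3$ with foci $c,a$ through $z$). I would also record the elementary fact that in any \emph{valid} ``triangle'' $s$-embedding the point $w$ lies inside $\triangle abc$: each quadrilateral $axbw$, $bycw$, $czaw$ is convex, so its diagonals cross, forcing $w$ onto the side of line $ab$ (resp.\ $bc$, $ca$) opposite to $x$ (resp.\ $y$, $z$) — the side containing $c$ (resp.\ $a$, $b$) — so that $w\in\triangle abc$.

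\emph{Uniqueness.} Suppose $w$ and $w'$ both give valid ``triangle'' $s$-embeddings. Both lie on $H_1,H_2,H_3$, so subtracting the defining equations gives $|wa|-|w'a|=|wb|-|w'b|=|wc|-|w'c|=:t$. If $w\ne w'$ then $a,b,c$ all lie on a single branch $\mathcal B$ of the hyperbola $\{p:\,|wp|-|w'p|=t\}$; since $a,b,c$ are not collinear (vertices of convex quadrilaterals) we have $t\ne 0$ and $\mathcal B$ is a genuine branch. Such a branch bounds a convex ``pocket'' containing exactly one of the two foci; since $a,b,c\in\mathcal B$, the triangle $abc$ lies in that pocket, so the \emph{other} focus cannot lie in $\triangle abc$. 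But both $w$ and $w'$ lie in $\triangle abc$, and one of them is that other focus (the one with the sign of $t$ forcing it outside the pocket) — a contradiction. Hence $w=w'$.

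\emph{Existence.} This is where I expect the main obstacle, and where the hypothesis that the ``star'' $s$-embedding exists must genuinely be used: for an arbitrary hexagon satisfying the alternating-sum identity, $H_1$ and $H_2$ need not meet, and when they do the intersection point need not give convex quadrilaterals. I would argue by continuity/degree. Note first an a priori bound: a valid $w$ lies in $\triangle abc$, hence in the bounded hexagon, so solutions cannot escape to infinity. Next one checks that the set $\mathcal U$ of hexagons admitting a valid ``star'' $s$-embedding is connected — any such hexagon can be deformed through $\mathcal U$ to a $3$-fold symmetric model (for which $v$ and $w$ coincide with the common center, every surrounding quadrilateral being a kite, hence tangential). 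Over $\mathcal U$ consider $\mathcal Z=\{(\text{hexagon},w):\,w\text{ valid},\ |wa|-|wb|=|ax|-|xb|,\ |wb|-|wc|=|by|-|yc|\}$. By the uniqueness above, $\mathcal Z\to\mathcal U$ is injective; by the a priori bound it is proper; and away from the (thin) locus where $H_1,H_2$ are tangent at $w$ it is a local homeomorphism by the implicit function theorem (the gradients $\widehat{w-a}-\widehat{w-b}$ and $\widehat{w-b}-\widehat{w-c}$ are independent there). Hence its image is open and closed in $\mathcal U$ and nonempty, so equals $\mathcal U$, giving existence. The hard part, as I see it, is making this paragraph rigorous — establishing connectedness of $\mathcal U$ and handling the tangency locus; an alternative is to produce $w$ directly by the intermediate value theorem along $H_1$, where the relevant function equals the Pitot defect of the sub-quadrilateral $xbyc$ at the vertex $x\in H_1$ and is governed by the asymptotic directions at the two ends, the content then being to show that the existence of $v$ prevents all three of these quantities from having the same sign.
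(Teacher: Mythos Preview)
Your reduction via Pitot's theorem to an intersection of hyperbola branches, and the observation that the three Pitot conditions on $w$ collapse to two once the alternating boundary sum vanishes, match the paper's setup. From there the two arguments diverge.

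\textbf{Uniqueness.} Your argument is genuinely different from the paper's and, modulo one point, cleaner. The paper works with the construction branches themselves: it shows that two branches sharing a focus meet in at most two points (via a ``corner chamber'' lemma locating the exterior focus relative to any three points on a branch), and then rules out one of the two candidates by a delicate case analysis of which arcs $A_2$ and $A_6$ can lie on while keeping the embedding proper. You instead place $a,b,c$ on a single branch $\mathcal B$ with foci $w,w'$ and use that the exterior focus lies outside $\operatorname{conv}(\mathcal B)\supset\triangle abc$. The step needing a word is $w\in\triangle abc$: you derive it from convexity of the three quads, which is true for $s$-embeddings in Chelkak's sense (each face carries an inscribed circle, hence is convex), but is \emph{not} automatic from the bare Pitot relation --- a non-convex dart can satisfy it. So either invoke the inscribed-circle property explicitly, or argue directly that in a proper three-quad tiling the central vertex must lie in $\triangle abc$.

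\textbf{Existence.} Here there is a genuine gap, and you have correctly located it. The continuity/degree outline is a reasonable strategy, but each flagged ingredient --- connectedness of $\mathcal U$ and the tangency locus --- is real work, and a third issue you do not flag is that a point $w$ solving the two distance equations must still be shown to produce three \emph{proper, correctly oriented} quads rather than a crossed or mis-oriented configuration (this is exactly where ``validity'' of $w$ interacts badly with the implicit-function step). The paper takes a completely different, constructive route that sidesteps all of this: it uses Chelkak's bijection between $s$-embeddings and complex-valued solutions of the Ising \emph{propagation equations}, proves that the Ising star-triangle relation transports such a solution from the star to the triangle combinatorics (a three-dimensional linear-algebra identity, verified via Jacobi elliptic functions), and then checks that the resulting quads are correctly oriented by tracking the cyclic order of the complex arguments of the propagation values around the hexagon. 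This produces $A_7$ explicitly, with no topological argument, at the cost of importing the propagation machinery. Your route would be more elementary if completed, but as written the existence half is a sketch, not a proof.
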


Moreover it follows from the construction in Section~\ref{sec:Ising} of the $s$-embedding after the cube move that the coupling constants associated with this new $s$-embedding are the same as those obtained by applying the star-triangle transformation to the coupling constants associated with the $s$-embedding before the cube move.
In other words, the star-triangle transformation of the Ising model
only has a local effect on $s$-embeddings. This is illustrated by a
commutative diagram in Figure~\ref{fig:diagram}. Our proof of
existence relies only on \cite[Proposition $4.7$]{Chelkak} by Chelkak, which connects a linear system on the graph (called \emph{propagation
  equations}) to the construction of $s$-embeddings. We prove
uniqueness via self-contained geometric arguments (see
Section~\ref{sec:Ising}).

In passing we prove new and simpler formulas for the transformation of coupling constants under the Ising star-triangle transformation:

\begin{theorem}
\label{thm:newformula}
Denote by $J_2,J_4,J_6$ (resp. $J_1,J_3,J_5$) the coupling
constants before (resp. after) the star-triangle transformation for
the Ising model, as on Figure~\ref{fig:st_ising}. For every $1\leq i \leq 6$ let $\theta_i$ be the unique angle in $(0,\tfrac{\pi}{2})$ such that $J_i = \frac12 \ln\left(\frac{1+\sin\theta_i}{\cos\theta_i}\right)$. Then
\begin{align}
\forall i \in \{2,4,6\}, & \ \cos(\theta_i) = \frac{\sin\theta_{i+3} \cos \theta_{i+1} \cos
      \theta_{i+5}}{\sin \theta_{i+3} + \sin \theta_{i+1} \sin
      \theta_{i+5}},   \label{eq:st_theta2} \\
\forall i \in \{1,3,5\}, & \ \sin(\theta_i) =
     \frac{\cos\theta_{i+3} \sin \theta_{i+1} \sin
      \theta_{i+5}}{\cos \theta_{i+3} + \cos \theta_{i+1} \cos
      \theta_{i+5}},   \label{eq:st_theta1}
\end{align}
where the indices are considered modulo $6$.
\end{theorem}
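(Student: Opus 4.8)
The plan is to start from the classical Ising star-triangle relations and perform a change of variables. Recall that in the standard formulation (e.g. Chapter~6 of \cite{Baxter:exactly}), the star-triangle transformation is governed by relations of the form $\sinh(2J_i)\sinh(2J_{i+3}) = k^{-1}$ for a common constant $k$, together with an identity expressing each $\cosh(2J_i)$ in terms of the opposite coupling constants. The substitution $J_i = \tfrac12\ln\!\left(\tfrac{1+\sin\theta_i}{\cos\theta_i}\right)$ is exactly the one that linearizes these hyperbolic identities: a direct computation gives $\e^{2J_i} = \tfrac{1+\sin\theta_i}{\cos\theta_i}$, hence $\sinh(2J_i) = \tan\theta_i$ and $\cosh(2J_i) = \sec\theta_i = \tfrac{1}{\cos\theta_i}$, and also $\tanh(2J_i)=\sin\theta_i$. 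So the first step is to record these three elementary identities, which turn every hyperbolic quantity appearing in the star-triangle relations into a rational trigonometric expression in the $\theta_i$.

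Next I would substitute these into the known star-triangle formulas. Writing $k$ for the common value of $\sinh(2J_2)\sinh(2J_4)\sinh(2J_6)$-type product (the precise bookkeeping of which index plays the role of ``star spin'' I would fix once and for all from Figure~\ref{fig:st_ising}), the duality/consistency relation becomes, after clearing denominators, a polynomial identity among $\sin\theta_i,\cos\theta_i$. Concretely, the classical formula $\cosh(2J_i) = \tfrac{\sinh(2J_{i+1})\sinh(2J_{i+5}) + \text{(something)}}{\sinh(2J_{i+3})}$ (I will extract the exact shape from \cite{Baxter:exactly} or rederive it from the two-spin partition-function matching) translates, using $\cosh(2J_i)=1/\cos\theta_i$ and $\sinh(2J_i)=\tan\theta_i$, into precisely \eqref{eq:st_theta2} after multiplying through by the product of the relevant cosines. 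The companion identity \eqref{eq:st_theta1} is then obtained either by the same manipulation applied to the ``triangle'' side, or — more elegantly — by invoking the Kramers–Wannier-type duality $\theta_i \leftrightarrow \tfrac{\pi}{2}-\theta_i$ that swaps the roles of the star and the triangle and interchanges $\sin\leftrightarrow\cos$; one checks that \eqref{eq:st_theta2} and \eqref{eq:st_theta1} are exchanged under this involution together with the relabeling $i \mapsto i+3$, so it suffices to prove one of them.

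I would organize the verification so that the algebra is symmetric in the three pairs $\{i,i+3\}$: after the substitution, the claim is that a single rational function of three angles equals $\cos\theta_i$, and one can check this by clearing all six cosines and expanding; the resulting polynomial identity in $\sin,\cos$ should collapse using only $\sin^2+\cos^2=1$. The main obstacle I anticipate is not conceptual but bookkeeping: getting the index conventions and the orientation of the transformation (which triple is the star, which is the triangle) exactly consistent with Figure~\ref{fig:st_ising}, and making sure the common constant $k$ is eliminated correctly rather than surviving spuriously in the final formula. A secondary subtlety is checking that $\theta_i$ indeed lies in $(0,\tfrac{\pi}{2})$ — this follows because $J_i>0$ forces $\e^{2J_i}>1$, and the map $\theta\mapsto \tfrac12\ln\tfrac{1+\sin\theta}{\cos\theta}$ is an increasing bijection from $(0,\tfrac{\pi}{2})$ onto $(0,+\infty)$ — and that the right-hand sides of \eqref{eq:st_theta2}–\eqref{eq:st_theta1} indeed return values in $(0,1)$, which is what legitimates reading off $\theta_i$ from its cosine or sine; this positivity I would get for free from the ferromagnetic assumption once the formula is established.
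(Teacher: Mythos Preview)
Your approach is viable but genuinely different from the paper's. The paper does not substitute into a pre-existing closed hyperbolic formula; instead it obtains \eqref{eq:st_theta2} as a byproduct of a linear-algebra computation (Proposition~\ref{prop:st_prop}) showing that the $3$-dimensional spaces of solutions to the propagation equations on the star and on the triangle coincide. The key identity \eqref{eq:entry_1} that emerges there is verified via the elliptic parametrisation $\tau_i = \Fel(\theta_i,k)$ and the addition formula \eqref{eq:ell_add} for Jacobi functions; combining \eqref{eq:entry_1} with \eqref{eq:tan_ratio} then gives the $i=4$ case directly. Your route avoids elliptic functions entirely, which is a genuine simplification, and both approaches finish the second family of identities the same way, via Kramers--Wannier duality $\theta_i \mapsto \tfrac{\pi}{2}-\theta_i$.

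One caution: the ``classical formula $\cosh(2J_i)=\dots$'' you plan to look up is not usually stated in exactly the shape you need. What \eqref{eq:st_theta2} is equivalent to, after your substitutions $\cosh 2J_i = \sec\theta_i$, $\sinh 2J_i=\tan\theta_i$, $\tanh 2J_i=\sin\theta_i$, is the hyperbolic identity
\[
\cosh 2J_{i} \;=\; \cosh 2J_{i+1}\,\cosh 2J_{i+5} \;+\; \coth 2J_{i+3}\,\sinh 2J_{i+1}\,\sinh 2J_{i+5}
\qquad (i\in\{2,4,6\}),
\]
expressing one star weight purely in terms of the three triangle weights. This relation does follow from the partition-function matching, but you will likely have to derive it yourself by taking suitable products and quotients of the four equations $2\cosh(\pm J_4 \pm J_6 \pm J_2)=R\,e^{\pm J_5 \pm J_3 \pm J_1}$ and eliminating $R$; it is not the form Baxter writes down. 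Once that is in hand, your substitution and the duality argument complete the proof exactly as you outline.
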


Formulas \eqref{eq:st_theta2} and \eqref{eq:st_theta1} are considerably less involved than the
classical transformation procedure which is described in Proposition
\ref{prop:Baxter}. We stress that the relation between the coupling constants before and after the star-triangle transformation is the same as in \cite{Onsager,Wannier,Baxter:exactly}; what is new is the simpler expression for that relation.

\medskip

In addition to the results on local transformations for $s$-embeddings and the Ising model presented above, the second main contribution of this article involves a new class of embeddings, called $\alpha$-embeddings, which we introduce a one-parameter common generalization of harmonic embeddings and $s$-embeddings. Given $\alpha\in\R^*$, we call a quadrilateral $ABCD$ drawn on the plane an \emph{$\alpha$-quad} if its side lengths satisfy $AB^\alpha+CD^\alpha=AD^\alpha+BC^\alpha$. The quadrilateral $ABCD$ may be non-convex or even have its edges intersecting away from their endpoints. An \emph{$\alpha$-embedding} of a graph $G$ is an embedding of its
diamond graph $G^\diamond$ such that each face of $G^\diamond$ is
drawn as an $\alpha$-quad.
This definition is firstly motivated by the introduction of a unified
framework for properties of harmonic and $s$-embeddings, which
correspond respectively to the cases $\alpha=2$ and
$\alpha=1$. Secondly, the celebrated isoradial embeddings are $\alpha$-embeddings for every $\alpha$, since rhombi are $\alpha$-quads for every $\alpha$. More generally, kite embeddings are $\alpha$-embeddings for every $\alpha$, see Section~\ref{sec:space}. Thirdly, as we shall see, there is a whole range of
parameters for which these quadrilaterals satisfy a version of a cube
move. This property suggests the presence of an integrable system, and it would be remarkable to find a family of such systems indexed by a continuous parameter.

To state this version of the cube move, we introduce the weaker notion
of an $\alpha$-realization of $G$ as a map from the vertices of
$G^\diamond$ to the plane such that each face of $G^\diamond$ is
mapped to an $\alpha$-quad, with edges possibly intersecting. We also
provide a definition of the above notions in the cases where
$\alpha\in\{-\infty,0,+\infty\}$. In addition, we show that, for
$\alpha>1$, the cube move is possible for $\alpha$-realizations (although the solution may not be unique). See Theorem \ref{theo:>1} for a
precise statement.

\begin{theorem}
  \label{theo:>1intro}
  Let $\alpha>1$ be a real number.
  For any $\alpha$-realization with the combinatorics of the graph on the
  left-hand side of Figure~\ref{fig:cube_diamond}, erase the central
  black point and keep all the other points fixed. Then there exists a
  central white point that gives an $\alpha$-realization with the
  combinatorics of the graph on the right-hand side of Figure~\ref{fig:cube_diamond}.
\end{theorem}

One may wonder whether a probabilistic model can be associated to $\alpha$-embeddings beyond the cases where $\alpha$ is $1$ or $2$. In other words, is there a way to define the interaction constants of a probabilistic model from the local geometry of $\alpha$-embeddings in such a way that the cube move for $\alpha$-embeddings is conjugated to the star-triangle move for the probabilistic model? The FK-percolation model is a common generalization of the Ising model and of spanning trees (which have the same star-triangle transformation as random walks). This model also has a star-triangle transformation; however, it can be applied only when the weights satisfy some local constraint  \cite{Kenyon:intro,DCLM}, whereas the star-triangle transformation for the Ising model and spanning trees holds without any condition on the weights. We did not succeed in relating $\alpha$-embeddings to FK-percolation for arbitrary weights that would satisfy the local constraint and do not expect such a general connection to hold true. Nevertheless there exists a subclass of weights among those satisfying the local constraint which can be associated to isoradial embeddings, that is, embeddings where the quadrilaterals are rhombi. In that case, the star-triangle transformation corresponds to a cube move for rhombus tilings, which exists and is unique.

\subsection*{Organization of the paper}

We define in Section~\ref{sec:def} our main object of interest,
$\alpha$-quads; we introduce $\alpha$-embeddings and
$\alpha$-realizations, before defining their cube
move. Section~\ref{sec:Ising} is devoted to the specific case
$\alpha=1$, where we recall the connection with the Ising model and
prove the existence and uniqueness of the cube move for properly
embedded graphs. We also prove Theorem~\ref{thm:newformula}.
In Section~\ref{sec:>1}, we show that for $\alpha>1$, such a move is always possible under weaker conditions, which leads to the proof of Theorem~\ref{theo:>1intro}.
Finally in Section~\ref{sec:space} we investigate some more
basic geometric properties of $\alpha$-quads and we propose a general
framework to study a broader class of quadrilaterals.  Appendix~\ref{sec:elliptic} contains a brief introduction to Jacobi elliptic functions, which are one of our main tools in the proofs of Section~\ref{sec:Ising}.

\section{Definitions}
\label{sec:def}

This first section is devoted to the definition of the so-called
$\alpha$-quads --- which are the main object of interest of this paper --- of their embeddings and of their realizations. We introduce an operation on
them called a \emph{cube move}, before defining an important tool to
prove this property, which we call
\emph{construction curves}.

In what follows, we
denote by $\R^*$ the set $\R \setminus \{0\}$ and by $\overline \R$ the set
$\R \cup \{-\infty,+\infty\}$ of extended real numbers. For two points
$A,B$ in the plane, $AB$ denotes the Euclidean distance between $A$ and $B$,
and we use
a dot to write products of lengths such as $AB.CD$.

\subsection{$\alpha$-embeddings and realizations}

Let us start by defining the notion of $\alpha$-quads.

\begin{definition}
\label{def:alpha}
A quadrilateral $ABCD$ is called an \emph{$\alpha$-quad} for $\alpha \in \R^*$ if
\begin{equation}
\label{eq:quadlengths}
AB^\alpha + CD^\alpha = AD^\alpha + BC^\alpha.
\end{equation}
It is called a $0$-quad if
\begin{equation}
\label{eq:alpha0}
AB.CD=AD.BC,
\end{equation}
a $+\infty$-quad if
\begin{equation}
\max(AB,CD) = \max(AD,BC)
\end{equation}
and a $-\infty$-quad if
\begin{equation}
\min(AB,CD) = \min(AD,BC).
\end{equation}
\end{definition}

Note that the $\alpha$-quads are not required to be convex nor even proper (see Definition~\ref{def:proper}), meaning that edges may intersect away from their endpoints.

One can immediately notice that $\alpha$-quads (for
$\alpha\in\overline\R$) are simply quadrilaterals $ABCD$ such that
$f_\alpha(AB,CD) = f_\alpha(AD,BC)$, where for $x,y>0$ we set
$f_\alpha(x,y)=x^\alpha+y^\alpha$ if $\alpha \in \R^*$, $f_0(x,y)=xy$,
$f_{+\infty}(x,y)=\max(x,y)$ and $f_{-\infty}(x,y)=\min(x,y)$. Such a
notation calls for a generalization from $f_\alpha$ to any homogeneous
symmetric function $f$ of two variables, which is done at the end of Section~\ref{sec:space}.

Of particular interest are the following three families of $\alpha$-quads, which were already defined in a different context:
\begin{itemize}
\item $1$-quads correspond to \emph{tangential quads}, i.e. quads such that there is a circle tangential to their four sides (see e.g. \cite{Josefsson2}) ;

\item $2$-quads correspond to \emph{orthodiagonal quads}, i.e. quads whose diagonals are perpendicular (see e.g. \cite{Josefsson}) ;

\item $0$-quads are known under the name of \emph{balanced quads} \cite{Josefsson2} and contain a well-studied class of quads, the \emph{harmonic quads}, which are defined as cyclic $0$-quads (that is, $0$-quads inscribed in a circle, see \cite{Atzema} for details).
\end{itemize}

\begin{definition}
\label{def:proper}
  For $n\geq 3$, an $n$-tuple of distinct points $A_1,\dots,A_n$ is said to be a
  \emph{proper} polygon if the line
  segments $[A_1A_2], [A_2A_3], \dots, [A_{n-1}A_n], [A_nA_1]$ do not
  intersect except possibly at their endpoints. In other words, the
  closed piecewise linear curve
  $A_1,A_2,\dots,A_n,A_1$ is a Jordan curve.

  A proper polygon is said to be \emph{positively} (resp. \emph{negatively})
    \emph{oriented} if the points on its boundary oriented counterclockwise
  (resp. clockwise) are $A_1,A_2,\dots,A_n$ in this order.
\end{definition}

Let $G$ be a planar graph, finite or infinite. Denote its dual graph by $G^*$, whose vertices are faces of $G$ and where we connect two dual vertices if the two corresponding faces share an edge. We construct the graph $G^{\diamond}$ as the bipartite graph whose black (resp. white) vertices are the vertices of $G$ (resp. $G^*$) and where an edge connects a black vertex to a white vertex whenever the black vertex is on the boundary of the face associated with the white vertex. In particular, all the faces of $G^{\diamond}$ are quadrilaterals, see Figure~\ref{fig:diamond}.

\begin{definition}
Let $\alpha \in \overline{\R}$. An \emph{$\alpha$-embedding} of $G$ is
an embedding of $G^{\diamond}$ in the plane such that every internal face of
$G^{\diamond}$ is an $\alpha$-quad.
\end{definition}

The fact that $G^{\diamond}$ is embedded in the plane implies that the $\alpha$-quads corresponding to internal faces are proper. In particular, for embedded graphs, edges cannot collapse to a point, two distinct edges cannot meet outside of their endpoints, faces have non-empty interiors and two distinct faces have disjoint interiors. This setting is the one preferred for the study of statistical mechanical models such as spanning trees and the Ising model. However, in the more general setting we consider, we need to allow drawings of graphs that are not necessarily embeddings.

\begin{definition}
Let $\alpha \in \overline{\R}$. An \emph{$\alpha$-realization} of $G$ is defined to be any map from the vertices of $G^{\diamond}$ to the plane such that every face of $G^{\diamond}$ is mapped to an $\alpha$-quad.
\end{definition}

See Figure \ref{fig:embeddingrealization} for an example of a $4$-embedding and a $4$-realization of the same graph.

\begin{figure}[htbp]
\begin{center}
\includegraphics[width=4in]{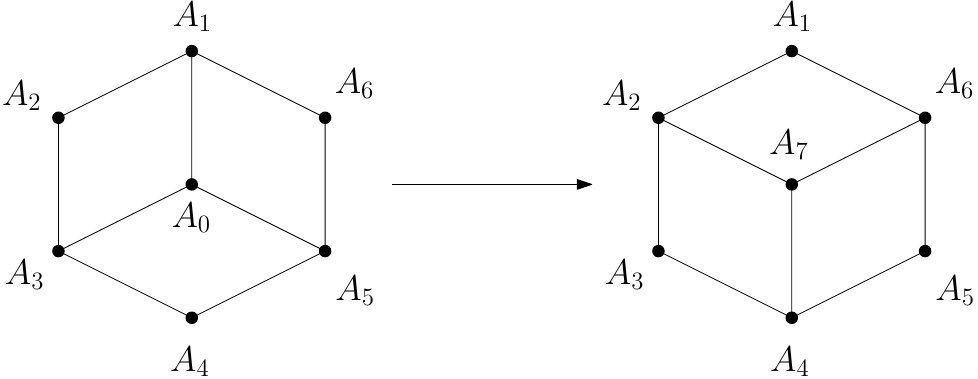}
\end{center}
\caption{The combinatorics of a cube move for $\alpha$-embeddings and $\alpha$-realizations.}
\label{fig:YDelta}
\end{figure}

\begin{figure}[h]
  \centering
  \def\svgwidth{5.1cm}
\begingroup%
  \makeatletter%
  \providecommand\color[2][]{%
    \errmessage{(Inkscape) Color is used for the text in Inkscape, but the package 'color.sty' is not loaded}%
    \renewcommand\color[2][]{}%
  }%
  \providecommand\transparent[1]{%
    \errmessage{(Inkscape) Transparency is used (non-zero) for the text in Inkscape, but the package 'transparent.sty' is not loaded}%
    \renewcommand\transparent[1]{}%
  }%
  \providecommand\rotatebox[2]{#2}%
  \newcommand*\fsize{\dimexpr\f@size pt\relax}%
  \newcommand*\lineheight[1]{\fontsize{\fsize}{#1\fsize}\selectfont}%
  \ifx\svgwidth\undefined%
    \setlength{\unitlength}{222.12371225bp}%
    \ifx\svgscale\undefined%
      \relax%
    \else%
      \setlength{\unitlength}{\unitlength * \real{\svgscale}}%
    \fi%
  \else%
    \setlength{\unitlength}{\svgwidth}%
  \fi%
  \global\let\svgwidth\undefined%
  \global\let\svgscale\undefined%
  \makeatother%
  \begin{picture}(1,0.82331768)%
    \lineheight{1}%
    \setlength\tabcolsep{0pt}%
    \put(0.33252824,0.79276787){\color[rgb]{0,0,0}\makebox(0,0)[t]{\lineheight{1.25}\smash{\begin{tabular}[t]{c}$A_1$\end{tabular}}}}%
    \put(0.05882095,0.34214298){\color[rgb]{0,0,0}\makebox(0,0)[t]{\lineheight{1.25}\smash{\begin{tabular}[t]{c}$A_2$\end{tabular}}}}%
    \put(0.15652544,0.10005281){\color[rgb]{0,0,0}\makebox(0,0)[t]{\lineheight{1.25}\smash{\begin{tabular}[t]{c}$A_3$\end{tabular}}}}%
    \put(0.80897442,0.00946889){\color[rgb]{0,0,0}\makebox(0,0)[t]{\lineheight{1.25}\smash{\begin{tabular}[t]{c}$A_4$\end{tabular}}}}%
    \put(0.74166683,0.50148766){\color[rgb]{0,0,0}\makebox(0,0)[t]{\lineheight{1.25}\smash{\begin{tabular}[t]{c}$A_5$\end{tabular}}}}%
    \put(0.94117901,0.5212679){\color[rgb]{0,0,0}\makebox(0,0)[t]{\lineheight{1.25}\smash{\begin{tabular}[t]{c}$A_6$\end{tabular}}}}%
    \put(0.6238147,0.311746){\color[rgb]{0,0,0}\makebox(0,0)[t]{\lineheight{1.25}\smash{\begin{tabular}[t]{c}$A_0$\end{tabular}}}}%
    \put(0,0){\includegraphics[width=\unitlength,page=1]{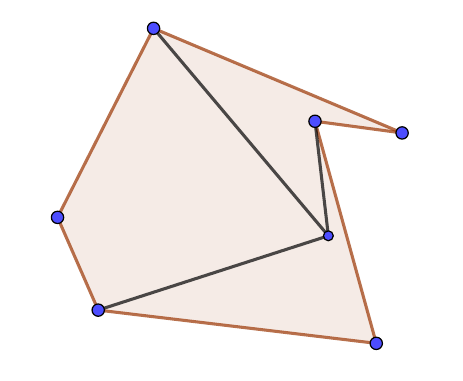}}%
  \end{picture}%
\endgroup%

  \def\svgwidth{5.1cm}
\begingroup%
  \makeatletter%
  \providecommand\color[2][]{%
    \errmessage{(Inkscape) Color is used for the text in Inkscape, but the package 'color.sty' is not loaded}%
    \renewcommand\color[2][]{}%
  }%
  \providecommand\transparent[1]{%
    \errmessage{(Inkscape) Transparency is used (non-zero) for the text in Inkscape, but the package 'transparent.sty' is not loaded}%
    \renewcommand\transparent[1]{}%
  }%
  \providecommand\rotatebox[2]{#2}%
  \newcommand*\fsize{\dimexpr\f@size pt\relax}%
  \newcommand*\lineheight[1]{\fontsize{\fsize}{#1\fsize}\selectfont}%
  \ifx\svgwidth\undefined%
    \setlength{\unitlength}{222.12371225bp}%
    \ifx\svgscale\undefined%
      \relax%
    \else%
      \setlength{\unitlength}{\unitlength * \real{\svgscale}}%
    \fi%
  \else%
    \setlength{\unitlength}{\svgwidth}%
  \fi%
  \global\let\svgwidth\undefined%
  \global\let\svgscale\undefined%
  \makeatother%
  \begin{picture}(1,0.82331768)%
    \lineheight{1}%
    \setlength\tabcolsep{0pt}%
    \put(0,0){\includegraphics[width=\unitlength,page=1]{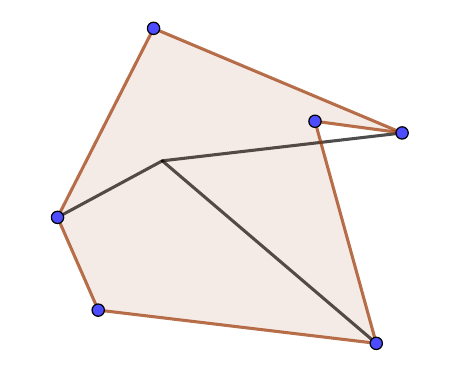}}%
    \put(0.33252824,0.79276787){\color[rgb]{0,0,0}\makebox(0,0)[t]{\lineheight{1.25}\smash{\begin{tabular}[t]{c}$A_1$\end{tabular}}}}%
    \put(0.05882095,0.34214298){\color[rgb]{0,0,0}\makebox(0,0)[t]{\lineheight{1.25}\smash{\begin{tabular}[t]{c}$A_2$\end{tabular}}}}%
    \put(0.15652544,0.10005281){\color[rgb]{0,0,0}\makebox(0,0)[t]{\lineheight{1.25}\smash{\begin{tabular}[t]{c}$A_3$\end{tabular}}}}%
    \put(0.80897442,0.00946889){\color[rgb]{0,0,0}\makebox(0,0)[t]{\lineheight{1.25}\smash{\begin{tabular}[t]{c}$A_4$\end{tabular}}}}%
    \put(0.74166683,0.50148766){\color[rgb]{0,0,0}\makebox(0,0)[t]{\lineheight{1.25}\smash{\begin{tabular}[t]{c}$A_5$\end{tabular}}}}%
    \put(0.94117901,0.5212679){\color[rgb]{0,0,0}\makebox(0,0)[t]{\lineheight{1.25}\smash{\begin{tabular}[t]{c}$A_6$\end{tabular}}}}%
    \put(0,0){\includegraphics[width=\unitlength,page=2]{alpha_realization.pdf}}%
    \put(0.33979454,0.50079057){\color[rgb]{0,0,0}\makebox(0,0)[t]{\lineheight{1.25}\smash{\begin{tabular}[t]{c}$A_7$\end{tabular}}}}%
  \end{picture}%
\endgroup%

  \caption{Here $\alpha=4$. Left: an $\alpha$-embedding of the portion of the graph
    on the left-hand side of Figure~\ref{fig:YDelta}. Right: an
    $\alpha$-realization of the portion of the graph on the right-hand side of
    Figure~\ref{fig:YDelta}. The quad $A_4A_5A_6A_7$ on the right is not proper.}
    \label{fig:embeddingrealization}
\end{figure}

As mentioned in the introduction, there are two notable classes of examples of $\alpha$-embeddings. The
class of $1$-embeddings of a planar graph corresponds to the class of $s$-embeddings
defined by Chelkak in \cite{Chelkak,Chelkak2} (see also \cite{Lis}), while the
class of $2$-embeddings corresponds to the class of harmonic embeddings
\cite{Tutte:emb,KLRR}.

\subsection{The cube move: setting}

We now define the property that we want to study on $\alpha$-quads,
which we call the \emph{flip property}. This property states that it
is possible to perform a \emph{cube move} like that of
Figure~\ref{fig:YDelta} locally on the realization or embedding, while
keeping the global structure unchanged.

\begin{definition}
\label{def:flipproperty}
For any $\alpha \in \overline{\R}$, the set of all
$\alpha$-realizations is said to satisfy the \emph{flip property} if,
for any six distinct points in the plane $A_1,A_2,\dots,A_6$ such
that $A_1,A_3,A_5$ (resp. $A_2,A_4,A_6$) are not aligned, the following are equivalent:
\begin{itemize}
\item there exists a point $A_0$ such that $A_0,A_1,\dots,A_6$ is an $\alpha$-realization of the graph on the left-hand side of Figure~\ref{fig:YDelta};
\item there exists a point $A_7$ such that $A_1,\dots,A_7$ is an $\alpha$-realization of the graph on the right-hand side of Figure~\ref{fig:YDelta}.
\end{itemize}

The set of $\alpha$-embeddings is said to satisfy the \emph{proper flip property} if, in the equivalence of Definition \ref{def:flipproperty}, it is also required that the figures are $\alpha$-embeddings, and that each of the quadrilaterals is proper, with its boundary vertices oriented in the same order as in Figure~\ref{fig:YDelta}.

In both cases, it is said to satisfy the \emph{unique} (proper) flip property if it satisfies the (proper) flip property and if, in addition, when the points $A_0$ and $A_7$ exist they are unique.
\end{definition}

Our ultimate goal is to understand for which values of $\alpha$ these
properties are satisfied by the set of $\alpha$-realizations or
$\alpha$-embeddings. In this direction, we notably prove Theorem
\ref{theo:1quad} for $1$-embeddings and Theorem~\ref{theo:>1} for
$\alpha$-realizations with $\alpha>1$, and conjecture a generalization
to other values of $\alpha$ (see Conjecture
\ref{conj:flipproperty}). Note that it is already known that the set
of $2$-embeddings satisfies the unique proper flip property and that the set of $2$-realizations satisfies the unique flip property. This can be proved using a theorem of Steiner, see
\cite{KS,KLRR}.

There is a necessary condition on the outer hexagon $A_1\ldots A_6$ in order to be in a position to flip three quads:

\begin{proposition}
\label{prop:hexagon}
Let $A_1,\dots,A_6$ be six distinct points and let $\alpha\in\R$. Suppose that there is either a point $A_0$ producing an $\alpha$-realization of the left-hand side of Figure~\ref{fig:YDelta} or a point $A_7$ producing an $\alpha$-realization of the right-hand side of Figure~\ref{fig:YDelta}. Then the side lengths of the hexagon $A_1\ldots A_6$ must satisfy
\begin{align}
A_1A_2^\alpha + A_3A_4^\alpha + A_5A_6^\alpha = A_2A_3^\alpha + A_4A_5^\alpha + A_6A_1^\alpha  &\quad \text{if } \alpha\neq0 \label{eq:alpha_hex} \\
A_1A_2. A_3A_4. A_5A_6 = A_2A_3. A_4A_5. A_6A_1 &\quad\text{if } \alpha=0. \label{eq:alpha_hex0}
\end{align}
\end{proposition}

\begin{proof}
When $\alpha\neq0$, formula~\eqref{eq:alpha_hex} follows immediately from summing the three equations defining the three $\alpha$-quads involving $A_0$ (resp. $A_7$). The case $\alpha=0$ works similarly.
\end{proof}

The example on Figure~\ref{fig:1real_no_flip} with $\alpha=1$ illustrates the fact that condition~\eqref{eq:alpha_hex} is not sufficient to have an $\alpha$-realization. Indeed, the existence of a point $A_0$ implies that condition~\eqref{eq:alpha_hex} is satisfied, but there exists no point $A_7$. We also point out that there is no analogue of Proposition~\ref{prop:hexagon} when $\alpha=\pm\infty$.

\begin{figure}[h]
  \centering
  \def\svgwidth{7cm}
  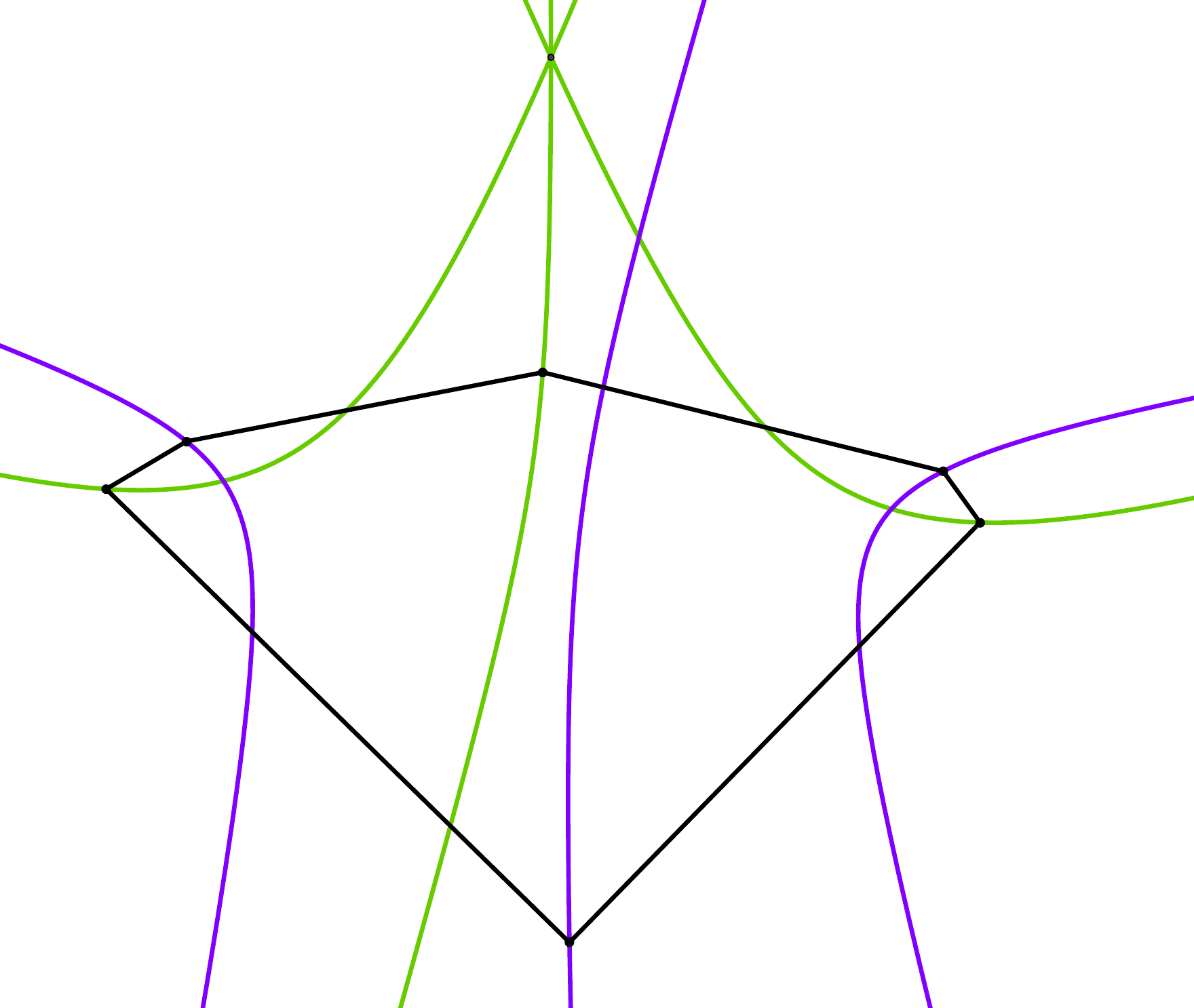
  \caption{A configuration for $\alpha=1$, with the corresponding
    construction curves. The green construction curves have an intersection point $A_0$ but the purple ones do not intersect.}
  \label{fig:1real_no_flip}
\end{figure}

\subsection{Construction curves}

In order to check the existence of the points $A_0$ and $A_7$
introduced in Definition \ref{def:flipproperty}, we will see them as intersection points of certain curves called construction curves. Let us first define them properly.

\begin{definition}
Let $A,B$ and $C$ be three distinct points in the plane and let $\alpha \in \overline \R$. The $\alpha$-\emph{construction curve} with foci $A,B$ going through $C$ is the set of points $M$ such that $ACBM$ is an $\alpha$-quad.

Let $A_1,\ldots,A_6$ be the distinct vertices of a hexagon. The $\alpha$-construction curves of the hexagon $A_1\ldots A_6$ are the six curves $\calC_i$ where for every $1\leq i\leq 6$, $\calC_i$ is defined to be the $\alpha$-construction curve with foci $A_{i-1}$ and $A_{i+1}$ going through $A_i$. Here indices are taken modulo $6$.
\end{definition}

\begin{figure}[h]
  \centering
  \def\svgwidth{12cm}
  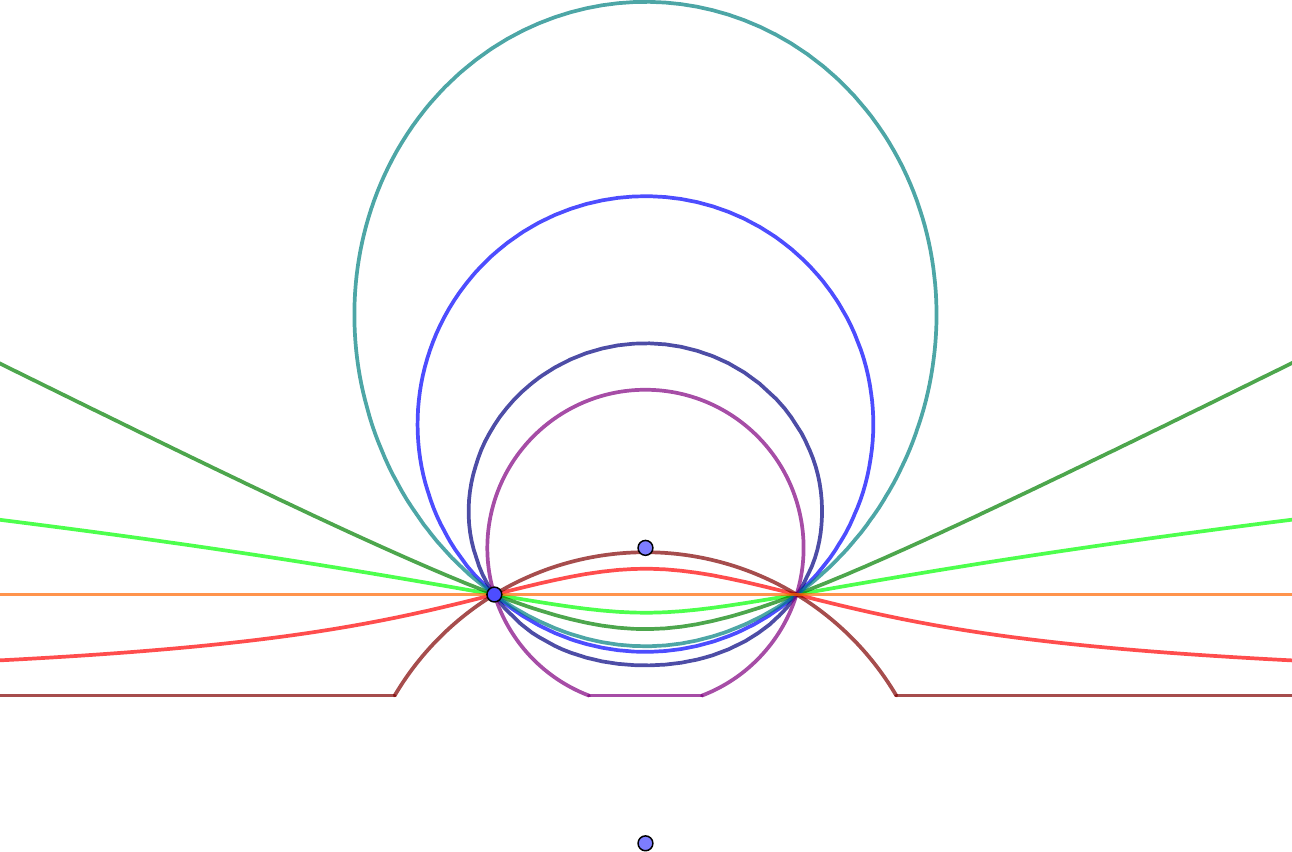
  \caption{Construction curves with foci $A$ and $B$ going through
    $C$. The fact that they intersect only at $C$ and at its symmetric
    with respect to $(AB)$ is a consequence of
    Proposition~\ref{prop:intersection}. The asymptotic
      behavior of construction curves is described in
      Lemma~\ref{lemma:asymp}.}
  \label{fig:illustr_construct_curves}
\end{figure}

The role of construction curves in our problem is the following. Start
with a hexagon $A_1A_2A_3A_4A_5A_6$ and let $\alpha\in\overline\R$. The existence of a point $A_0$ as on the left-hand side of Figure~\ref{fig:YDelta} is equivalent to the fact that $\calC_2$, $\calC_4$ and $\calC_6$ have a common point, see Figure \ref{fig:ex_intersections}. Similarly the existence of a point $A_7$ as on the right-hand side of Figure~\ref{fig:YDelta} is equivalent to the fact that $\calC_1$, $\calC_3$ and $\calC_5$ have a common point.

\begin{remark}
\label{rem:2curves}
In order for the flip property to be satisfied when $\alpha\in\R$, it is actually enough to consider the intersection of two construction curves rather than three. Indeed, if $\alpha\neq0$ and if $A_0,A_1,\dots,A_6$ are seven points such that the quadrilaterals $A_1A_2A_3A_0$, $A_3A_4A_5A_0$ and $A_5A_6A_1A_0$ are all $\alpha$-quads, then by Proposition~\ref{prop:hexagon} formula~\eqref{eq:alpha_hex} is satisfied. Hence, if $\calC_1$ and $\calC_3$ have a common point $A_7$, then combining the equation of the two $\alpha$-quads $A_2A_3A_4A_7$ and $A_6A_1A_2A_7$ with formula~\eqref{eq:alpha_hex} yields that $A_7$ also lies on $\calC_5$. The case $\alpha=0$ works similarly, replacing formula~\eqref{eq:alpha_hex} by formula~\eqref{eq:alpha_hex0}.
\end{remark}

\begin{figure}[htbp]
  \centering
  \definecolor{xfqqff}{rgb}{0.4980392156862745,0.,1.}
\definecolor{xdxdff}{rgb}{0.49019607843137253,0.49019607843137253,1.}
\definecolor{uuuuuu}{rgb}{0.26666666666666666,0.26666666666666666,0.26666666666666666}
\definecolor{wwccqq}{rgb}{0.4,0.8,0.}
\begin{tikzpicture}[line cap=round,line join=round,x=1.0cm,y=1.0cm, scale=2]
\clip(-0.982883474532752,-1.4670062146945726) rectangle (1.840114012870508,0.932859636113895);
\draw [line width=1.pt,color=wwccqq] (0.5838341505140906,0.4376643694032091) circle (0.5441005951115861cm);
\draw [line width=1.pt,color=wwccqq] (-3.7219161778991188,4.717896557368066) circle (6.60019283989575cm);
\draw [line width=1.pt,color=wwccqq] (1.603132952483884,-0.575593531909651) circle (0.9192761058052245cm);
\draw [line width=1.pt,color=xfqqff] (-0.08890167820051813,0.11838511401555643) circle (0.5324426891464713cm);
\draw [line width=1.pt,color=xfqqff] (-0.17160406707526732,0.6515193378698584) circle (1.0896766753476703cm);
\draw [line width=1.pt,color=xfqqff] (0.40525287922990966,-3.0671420518167003) circle (2.6483128162262393cm);
\draw [line width=1.pt] (0.41110554233457375,0.3013823725696064)-- (0.11812284798744888,0.15631327148510765);
\draw [line width=1.pt] (0.11812284798744888,0.15631327148510765)-- (-0.4725112398056511,-0.39578679225454766);
\draw [line width=1.pt] (-0.4725112398056511,-0.39578679225454766)-- (0.8735081792599249,-1.1348039118225282);
\draw [line width=1.pt] (0.8735081792599249,-1.1348039118225282)-- (1.1990298756758708,-0.5405873121561119);
\draw [line width=1.pt] (1.1990298756758708,-0.5405873121561119)-- (1.2360082739915272,0.36111670831028236);
\draw [line width=1.pt] (1.2360082739915272,0.36111670831028236)-- (0.41110554233457375,0.3013823725696064);
\begin{scriptsize}
\draw [fill=black] (0.41110554233457375,0.3013823725696064) circle (0.8pt);
\draw[color=black] (0.4310169875814654,0.3540054778649638) node [above] {$A_1$};
\draw [fill=black] (0.11812284798744888,0.15631327148510765) circle (0.8pt);
\draw[color=black] (0.13803429323434052,0.20893637678046503) node [above] {$A_2$};
\draw [fill=black] (-0.4725112398056511,-0.39578679225454766) circle (0.8pt);
\draw[color=black] (-0.453620079816164,-0.34289510577625565) node [left] {$A_3$};
\draw [fill=black] (1.1990298756758708,-0.5405873121561119) circle (0.8pt);
\draw[color=black] (1.2189413209227622,-0.4879642068607544) node [right] {$A_5$};
\draw [fill=black] (1.2360082739915272,0.36111670831028236) circle (0.8pt);
\draw[color=black] (1.2559197192384188,0.4137398136056397) node [above] {$A_6$};
\draw [fill=uuuuuu] (1.051717107790615,0.15993969298615707) circle (0.8pt);
\draw[color=uuuuuu] (1.071027727660136,0.2460918846023376) node {$A_0$};
\draw [fill=black] (0.8735081792599249,-1.1348039118225282) circle (0.8pt);
\draw[color=black] (0.8918247204381083,-1.1481520564456413) node [below] {$A_4$};
\draw [fill=uuuuuu] (0.8643350577483865,-0.028559502402513863) circle (0.8pt);
\draw[color=uuuuuu] (0.883291243903726,0.058355400845927475) node {$A'_0$};
\end{scriptsize}
\end{tikzpicture}
  \definecolor{xfqqff}{rgb}{0.4980392156862745,0.,1.}
\definecolor{xdxdff}{rgb}{0.49019607843137253,0.49019607843137253,1.}
\definecolor{uuuuuu}{rgb}{0.26666666666666666,0.26666666666666666,0.26666666666666666}
\definecolor{wwccqq}{rgb}{0.4,0.8,0.}
\definecolor{ududff}{rgb}{0.30196078431372547,0.30196078431372547,1.}
\begin{tikzpicture}[line cap=round,line join=round,x=1.0cm,y=1.0cm, scale=2]
\clip(-0.982883474532752,-1.4670062146945726) rectangle (1.840114012870508,0.932859636113895);
\draw [line width=1.pt,color=wwccqq] (0.5838341505140906,0.4376643694032091) circle (0.5441005951115861cm);
\draw [line width=1.pt,color=wwccqq] (5.525353763465017,-4.919412686501057) circle (6.803122475564672cm);
\draw [line width=1.pt,color=wwccqq] (1.3523444646763976,-0.39547387423236413) circle (0.6869033148582944cm);
\draw [line width=1.pt,color=xfqqff] (-0.08890167820051813,0.11838511401555643) circle (0.5324426891464713cm);
\draw [line width=1.pt,color=xfqqff] (-0.2707448431274471,0.703549611186293) circle (1.1176986207197133cm);
\draw [line width=1.pt,color=xfqqff] (0.47948549127446516,-1.710663518952187) circle (1.4515403636094448cm);
\draw [line width=1.pt] (0.41110554233457375,0.3013823725696064)-- (0.11812284798744888,0.15631327148510765);
\draw [line width=1.pt] (0.11812284798744888,0.15631327148510765)-- (-0.4725112398056511,-0.39578679225454766);
\draw [line width=1.pt] (-0.4725112398056511,-0.39578679225454766)-- (0.807152794659839,-0.8133281633187261);
\draw [line width=1.pt] (0.807152794659839,-0.8133281633187261)-- (1.093783665085156,-0.3955182110716131);
\draw [line width=1.pt] (1.093783665085156,-0.3955182110716131)-- (1.2360082739915272,0.36111670831028236);
\draw [line width=1.pt] (1.2360082739915272,0.36111670831028236)-- (0.41110554233457375,0.3013823725696064);
\begin{scriptsize}
\draw [fill=black] (0.41110554233457375,0.3013823725696064) circle (0.8pt);
\draw[color=black] (0.4310169875814654,0.3540054778649638) node [above] {$A_1$};
\draw [fill=black] (0.11812284798744888,0.15631327148510765) circle (0.8pt);
\draw[color=black] (0.13803429323434052,0.20893637678046503) node [above] {$A_2$};
\draw [fill=black] (-0.4725112398056511,-0.39578679225454766) circle (0.8pt);
\draw[color=black] (-0.453620079816164,-0.34289510577625565) node [left] {$A_3$};
\draw [fill=black] (1.093783665085156,-0.3955182110716131) circle (0.8pt);
\draw[color=black] (1.113695110332048,-0.34289510577625565) node [right] {$A_5$};
\draw [fill=black] (1.2360082739915272,0.36111670831028236) circle (0.8pt);
\draw[color=black] (1.2559197192384188,0.4137398136056397) node [above] {$A_6$};
\draw [fill=uuuuuu] (1.0906077316950136,0.23960883421712942) circle (0.8pt);
\draw[color=uuuuuu] (1.1108506181539206,0.30573766558990554) node {$A_0$};
\draw [fill=black] (0.807152794659839,-0.8133281633187261) circle (0.8pt);
\draw[color=black] (0.8235569081630506,-0.8267244403172422) node [below] {$A_4$};
\draw [fill=uuuuuu] (0.270774818983947,-0.2742063206829597) circle (0.8pt);
\draw[color=uuuuuu] (0.2616368708532219,-0.16627092647303115) node {$A_7$};
\draw [fill=uuuuuu] (0.7404147490581201,-0.0834190920119231) circle (0.8pt);
\draw[color=uuuuuu] (0.7609780802442472,-0.00568995053849362) node {$A'_0$};
\draw [fill=uuuuuu] (-0.16271633853162257,-0.4089161406252129) circle (0.8pt);
\draw[color=uuuuuu] (-0.16063738546903888,-0.3081413163669704) node {$A'_7$};
\end{scriptsize}
\end{tikzpicture}
  \def\svgwidth{11cm}
  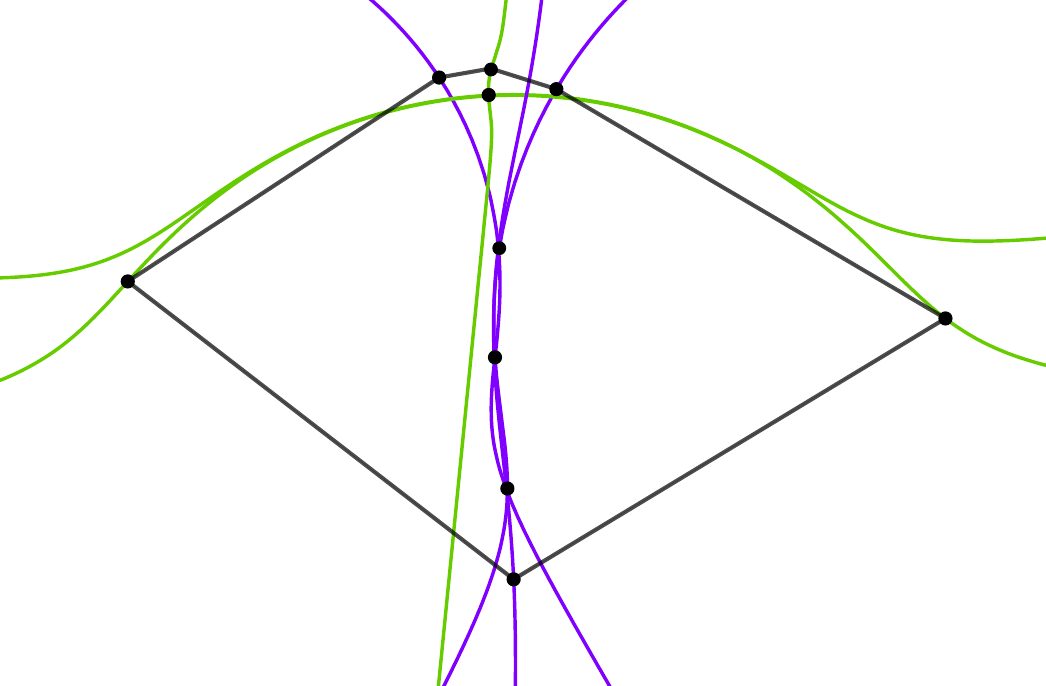
  \caption{Examples of construction curves for some hexagons. Top
    left: for $\alpha=0$, two possible points $A_0$ and no point
    $A_7$. Top right: for $\alpha=0$, two points $A_0$ and two points
    $A_7$. Pictures with the same number of solutions as the top two pictures can be obtained for $\alpha$ close to $0$. Bottom: for $\alpha=10$, three points $A_0$ and
    one point $A_7$.}
  \label{fig:ex_intersections}
\end{figure}

In a few cases, the construction curves are well known. Fix $A$ and $B$ two points in the plane.
If $C$ is a point such that $AC=BC$, then clearly for any $\alpha$ the construction curve going through $C$ is the perpendicular bisector of $[AB]$, which we denote from now on by $P(AB)$. Without loss of generality, we may assume in what follows that $BC < AC$. See Figure \ref{fig:illustr_construct_curves} for the general shape of construction curves depending on the value of $\alpha$.

For any point $O$, we denote by $\calC(O,C)$ the circle with center $O$ going through $C$, and $\calD(O,C)$ the closed disk enclosed by $\calC(O,C)$. The following proposition characterizes the construction curves in the special cases $\alpha \in \{-\infty, 0,1,2,+\infty \}$.

\begin{proposition}
  \label{prop:special_construction}
  Let $A,B$ and $C$ be three points such that $BC<AC$. We denote by $H$ the closed half-plane bounded by $P(AB)$ containing $B$. Then one has the following description for the $\alpha$-construction curve with foci $A,B$ going through $C$:
\begin{itemize}
\item For $\alpha=-\infty$, it is the union of the circular arc $\calC(B,C)\cap H$ and the segment $P(A,B) \cap \calD(B,C)$;
\item For $\alpha=+\infty$, it is the union of the circular arc $\calC(A,C)\cap H$ and the two half-lines $P(A,B) \cap \calD(A,C)^c$;
\item For $\alpha=0$, it is a circle going through $C$;
\item For $\alpha=1$, it is the branch closest to $B$ of the hyperbola with foci $A,B$ going through $C$;
\item For $\alpha=2$, it is the perpendicular to $(AB)$ going through $C$.
\end{itemize}
\end{proposition}

\begin{proof}
For $\alpha=-\infty$, the curve is $\{M \in \R^2 \mid \min(AM,BC) = \min(BM,AC)\}$. The explicit description follows from a simple case handling, depending of the length realizing the minimum. The same goes for $\alpha=+\infty$.

For $\alpha = 0$, the curve is the set of points $M$ such that the ratio $\frac{MA}{MB}$ is fixed, which is a circle by Apollonius's circle theorem.

For $\alpha = 1$, the curve is the set of points $M$ such that $AM-BM$ is a fixed positive number, which is a branch of hyperbola with foci $A,B$.

For $\alpha = 2$, as stated in Section~\ref{sec:space}, the quadrilateral $ACBM$ is a $2$-quad if and only if $(AB)$ and $(CM)$ are perpendicular.
\end{proof}

\section{The cube move for $\alpha=1$}
\label{sec:Ising}

Our goal in this section is to prove the unique proper flip property for $1$-embeddings.

\begin{theorem}
  \label{theo:1quad}
The set of $1$-embeddings satisfies the unique proper flip property.
\end{theorem}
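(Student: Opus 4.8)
The plan is to run the construction-curve strategy from the proof of Theorem~\ref{theo:>1}, but since for $\alpha=1$ the asymptotic argument there breaks down (two branches of hyperbolas need not intersect), I would feed in the Ising star--triangle transformation to obtain existence, and use a Jacobi-elliptic parametrisation of the construction curves to single out the solution and check properness.

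\emph{Reduction.} Exactly as in Theorem~\ref{theo:>1}, the point $A_0$ exists iff the three construction curves $\calC_2$ (foci $A_1,A_3$, through $A_2$), $\calC_4$ (foci $A_3,A_5$, through $A_4$), $\calC_6$ (foci $A_5,A_1$, through $A_6$) have a common point, and $A_7$ exists iff $\calC_1$ (foci $A_2,A_6$, through $A_1$), $\calC_3$ (foci $A_2,A_4$, through $A_3$), $\calC_5$ (foci $A_4,A_6$, through $A_5$) have a common point; for $\alpha=1$ each such curve is a branch of a hyperbola, or a perpendicular bisector in degenerate position. The $\alpha=1$ case of the identity~\eqref{eq:alpha_hex} shows that once $A_0$ exists, every point of $\calC_1\cap\calC_3$ lies on $\calC_5$; and the cyclic relabelling $A_i\mapsto A_{i+1}$ of the hexagon exchanges the two triples of curves, so it suffices to prove that, when $A_0$ exists and the three quads around it form a proper positively oriented embedding, $\calC_1\cap\calC_3$ consists of exactly one point, which is proper and positively oriented.

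\emph{Existence via Ising.} I would first recall from \cite{Chelkak} (and Section~\ref{sec:Ising}) the dictionary between proper positively oriented tangential quads and ferromagnetic Ising couplings: such a quad has a shape angle $\theta\in(0,\tfrac\pi2)$ (equivalently $J=\tfrac12\ln\tfrac{1+\sin\theta}{\cos\theta}>0$), this angle varies monotonically along a construction curve and takes every value in $(0,\tfrac\pi2)$, and conversely fixing the angle together with three vertices of a quad determines the fourth. Under our hypothesis the three quads around $A_0$ are the $s$-embedding of a ferromagnetic Ising model on the ``Y'' graph, with angles $\theta_2,\theta_4,\theta_6$ in the labelling of Theorem~\ref{thm:newformula}; the classical star--triangle transformation (Proposition~\ref{prop:Baxter}) then yields a ferromagnetic ``$\Delta$'', with angles $\theta_1,\theta_3,\theta_5\in(0,\tfrac\pi2)$. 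By monotonicity, each of $\calC_1,\calC_3,\calC_5$ carries a unique point realising the corresponding angle $\theta_1$, $\theta_3$, $\theta_5$; the crux is that these three points coincide. This is where Jacobi elliptic functions (Appendix~\ref{sec:elliptic}) enter: one parametrises each construction curve by a spectral parameter so that composing the three local moves around a vertex becomes addition of parameters, and the addition theorem for $\sn,\cn,\dn$ then shows that the three points agree --- giving a point $A_7\in\calC_1\cap\calC_3$ --- and, as a by-product, yields the simplified formulas \eqref{eq:st_theta2}--\eqref{eq:st_theta1}. Properness of the three new quads follows from $\theta_1,\theta_3,\theta_5\in(0,\tfrac\pi2)$.

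\emph{Uniqueness and main obstacle.} Any proper positively oriented $A_7\in\calC_1\cap\calC_3\cap\calC_5$ produces three proper tangential quads with shape angles $\theta_1',\theta_3',\theta_5'\in(0,\tfrac\pi2)$ forming, by the dictionary, a ferromagnetic ``$\Delta$'' obtained from the fixed ``Y''; since the star--triangle map is single-valued, $(\theta_1',\theta_3',\theta_5')=(\theta_1,\theta_3,\theta_5)$, and since the point of $\calC_j$ with a prescribed shape angle is unique, $A_7$ is forced. The step I expect to be the real obstacle is the elliptic parametrisation itself: finding the correct spectral parameter on a construction curve and proving that the composition around a vertex is genuinely additive (equivalently, that $\calC_1$ and $\calC_3$ meet in precisely the predicted point). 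Once this is established, monotonicity of the shape angle delivers both uniqueness and properness and, in particular, rules out the second ``spurious'' intersection point of the two hyperbola branches --- the one responsible for the non-uniqueness of the cube move for $\alpha$-realizations, cf.\ Figure~\ref{fig:ex_intersections}. Some routine care is also needed in the degenerate sub-cases, for instance when a construction curve is a perpendicular bisector or when one of the quads is a kite (Proposition~\ref{prop:intersection}).
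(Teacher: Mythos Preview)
Your existence strategy via the Ising star--triangle move is the right idea and is what the paper does, but the paper implements it differently and more directly: rather than parametrising each construction curve $\calC_j$ by a shape angle and then trying to prove an elliptic ``additivity'' statement to make three distinguished points coincide (the step you yourself flag as the main obstacle, and do not carry out), the paper lifts the whole configuration to a solution of Chelkak's propagation equations on the double cover $\dbc$ and shows (Proposition~\ref{prop:st_prop}) that the $3$-dimensional spaces of admissible boundary data $(w_1,\dots,w_6)$ for the star and for the triangle coincide when the angles are related by star--triangle. The elliptic identity \eqref{eq:ell_add} is used once, inside that linear-algebra computation, rather than as an addition law along construction curves. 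Properness of the new quads is also more delicate than ``$\theta_i\in(0,\tfrac\pi2)$'': the paper needs a separate lemma (Lemma~\ref{lemma:orient}) relating the orientation of a quad to the cyclic order of the arguments of the propagation values, together with an argument about the total angle of the boundary hexagon.

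The genuine gap in your proposal is the uniqueness argument. You write that any proper $A_7$ yields a ferromagnetic ``$\Delta$'' \emph{obtained from the fixed ``Y''}, and then invoke single-valuedness of the star--triangle map to conclude $(\theta_1',\theta_3',\theta_5')=(\theta_1,\theta_3,\theta_5)$. But nothing you have said forces the $\Delta$ arising from a second point $A_7'$ to be the star--triangle transform of the given Y: the only link between the two Ising models is that their $s$-embeddings share the same boundary hexagon, and you have not shown that this geometric constraint pins down the triangle angles. (The paper explicitly warns in the introduction that ``uniqueness is not clear a priori, as after removing the central black dot, the figure might be achievable as the $s$-embedding of some other Ising weights.'') The paper proves uniqueness by a completely different, purely geometric route: two generalised hyperbola branches with a single common focus meet in at most two points (Lemma~\ref{lemma:branches_intersection}, proved via a ``corner chambers'' argument, Lemma~\ref{lemma:chamber}), and then a direct coordinate comparison shows that at most one of these two intersection points can yield a proper, correctly oriented embedding (Proposition~\ref{prop:uniqueness}). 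Your monotonicity-of-shape-angle idea, even if it could be made rigorous, does not by itself rule out a second proper $A_7'$ with different angles.
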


Although our proof only works in this specific case, based on the observation of many numerical examples we expect the following more general result to hold:
\begin{conjecture}
\label{conj:flipproperty}
For any $\alpha \in [-\infty, 1]$, for any six distinct points in the
plane $A_1,A_2,\dots,A_6$ such
that $A_1,A_3,A_5$ (resp. $A_2,A_4,A_6$) are not aligned, the following are equivalent:
  \begin{itemize}
  \item there exists a \emph{unique} point $A_0$ such that $A_0,A_1,\dots,A_6$ is an $\alpha$-\emph{embedding} of the graph on the left-hand side of Figure~\ref{fig:YDelta};
  \item there exists a \emph{unique} point $A_7$ such that $A_1,\dots,A_7$ is an $\alpha$-\emph{embedding} of the graph on the right-hand side of Figure~\ref{fig:YDelta}.
  \end{itemize}
\end{conjecture}
Notice that this is weaker than the unique proper flip property, as it
is possible that several points $A_0$ exist and no point $A_7$. Such a
configuration is shown in the top-left picture of
Figure~\ref{fig:ex_intersections}. For the sake of
completeness, let us also mention that the set of $1$-realizations does
not satisfy the flip property (when there is no ``proper''
requirement): there exist configurations such that there is a point
$A_0$ yielding $1$-realizations, but there is no point $A_7$ (see Figure~\ref{fig:1real_no_flip}).

The rest of this section is devoted to the proof of Theorem~\ref{theo:1quad}.

\subsection{Uniqueness}

In this first part, we prove the following proposition, which states the uniqueness of the point $A_0$, if it exists.
\begin{proposition}
  \label{prop:uniqueness}
For any proper positively oriented hexagon $A_1,A_2,\dots,A_6$, there exists at most one point $A_0$ such that $A_0,A_1,\dots,A_6$ is a proper $1$-embedding of the left-hand side of Figure~\ref{fig:YDelta}.
\end{proposition}

In order to prove it, we first need some information on the geometric properties of $1$-quads. Notice that if three distinct points $A,B,C$ are fixed, then the construction curve for $1$-quads is the set of points $D$ such that
\begin{equation}
AD-CD = AB - BC.
\end{equation}
Hence, it is a hyperbola branch with foci $A,C$ (with possible degenerate cases being the perpendicular bisector of $[AC]$, and
half-lines $A + t (A-C)$ or $C + t(C-A)$ for $t\geq 0$). We put all these cases under the same name:
\begin{definition}
  Let $A,C$ be two distinct points in the plane. For any $\lambda \in
  \R$, the set of points $D$ in the plane such that
  \begin{equation}
    \label{eq:gen_br}
    AD - CD = \lambda
  \end{equation}
  is called a \emph{generalised hyperbola branch} with foci $A$ and $C$.
\end{definition}

The following lemma already implies that there are at most two admissible points $A_0$, in the sense of Proposition \ref{prop:uniqueness}.
\begin{lemma}
\label{lemma:branches_intersection}
Assume that two generalised hyperbola branches have exactly one common focus, then they have at most two intersection points.
\end{lemma}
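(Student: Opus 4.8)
The statement is purely algebraic: two generalised hyperbola branches sharing a common focus meet in at most two points. I would set up coordinates adapted to the shared focus. Say the two branches have foci $\{A,C\}$ and $\{A,C'\}$ with the common focus at $A$, and are described respectively by $AD - CD = \lambda$ and $AD - C'D = \mu$ for constants $\lambda,\mu$. A point $D$ lying on both satisfies $CD - C'D = \mu - \lambda =: \nu$; that is, $D$ lies on a generalised hyperbola branch with foci $C$ and $C'$. So the problem reduces to showing that a level set $\{AD - CD = \lambda\}$ and a level set $\{CD - C'D = \nu\}$ meet in at most two points, i.e. that two generalised hyperbola branches (now with three pairwise distinct foci $A,C,C'$, after discarding the trivial coincidence cases) intersect at most twice.

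To bound the intersection of $\{AD-CD=\lambda\}$ and $\{CD-C'D=\nu\}$, I would eliminate radicals. Writing $r_A = AD$, $r_C = CD$, $r_{C'} = C'D$, we have $r_A = r_C + \lambda$ and $r_{C'} = r_C - \nu$. Squaring $r_A^2 = (r_C+\lambda)^2$ gives $r_A^2 - r_C^2 = 2\lambda r_C + \lambda^2$; but $r_A^2 - r_C^2$ is an affine function of the coordinates of $D$ (the quadratic terms cancel), so this expresses $r_C$ as an affine function of $D$, say $r_C = \ell_1(D)$, provided $\lambda\neq 0$ (the case $\lambda=0$ being the perpendicular bisector, a line, handled separately — and two of our curves being lines is an easy sub-case). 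Similarly $r_{C'}^2 - r_C^2$ affine gives $r_C = \ell_2(D)$ for another affine function, provided $\nu\neq 0$. Then $\ell_1(D) = \ell_2(D)$ confines $D$ to a line $L$ (or, if $\ell_1\equiv\ell_2$, we must argue separately — this happens only in degenerate collinear configurations). Finally, on the line $L$, the equation $r_C = \ell_1(D)$ with $r_C = CD$ becomes: the distance from the variable point $D\in L$ to the fixed point $C$ equals an affine function of the arclength parameter along $L$. Squaring once more, $CD^2$ is quadratic in the parameter while $\ell_1(D)^2$ is also quadratic, but the leading (quadratic) coefficients: $CD^2$ has leading coefficient $1$ in the arclength parameter, and $\ell_1$ restricted to $L$ is affine so $\ell_1^2$ has leading coefficient the square of its slope. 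If these differ we get a genuine quadratic in one variable, hence at most two roots; if they agree we get at most a linear equation, hence at most one root — in all cases at most two. I would also need to check that squaring introduced no spurious count beyond this bound, but since each squaring step is an implication used only to confine $D$ to a smaller set, the final count of at most two points of the line $L$ is an honest upper bound on the true intersection.

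The main obstacle I anticipate is the bookkeeping of degenerate cases: (i) $\lambda = 0$ or $\nu = 0$, where one or both "branches" are straight lines or half-lines, (ii) the situation where the three foci $A, C, C'$ become collinear, in which case the affine functions $\ell_1,\ell_2$ above may coincide identically and the reduction to a line $L$ fails — here one should instead exploit the one-dimensional structure directly (restrict to the common line of the foci and to its perpendicular bisectors), and (iii) the degenerate input where the two original branches actually share \emph{both} foci, which the hypothesis "exactly one common focus" is designed to exclude but which must be separated cleanly from "the branches are equal". A cleaner alternative to the radical-elimination argument, which I would consider to minimize casework, is to note that each generalised hyperbola branch lies on a conic (a genuine hyperbola, or a degenerate conic: a pair of lines, or a line), so two distinct branches with a common focus lie on two conics; two distinct conics meet in at most four points by Bézout, and a parity/orientation argument on which \emph{branch} of each conic is selected — using that a single branch is one connected arc going to infinity in controlled directions — should cut this to two. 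I expect the elementary squaring argument above to be the most self-contained route and would present that, relegating the collinear degeneracies to a short case analysis.
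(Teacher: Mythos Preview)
Your approach is sound and will work, but it is genuinely different from the paper's. You and the paper share the opening move: subtracting the two defining equations shows that any common point of the branches with foci $\{A,C\}$ and $\{A,C'\}$ also lies on a third generalised branch with foci $\{C,C'\}$. From there the paper proceeds purely geometrically: it introduces the notions of \emph{interior} and \emph{exterior} focus of a branch, observes that among the three foci one must be interior for one of the three branches and exterior for another, and then proves a small lemma (Lemma~\ref{lemma:chamber}) saying that for any three non-collinear points $S,T,U$ on a single branch, the exterior focus lies in one of the three ``corner chambers'' of the triangle $STU$ while the interior focus does not. Three intersection points would then force the distinguished focus to be simultaneously in and not in a corner chamber, a contradiction.

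By contrast, your route is algebraic: rewrite $r_A-r_C=\lambda$ as $r_A^2-r_C^2=2\lambda r_C+\lambda^2$ to express $r_C$ affinely, do the same from the third branch, intersect the two affine level sets to get a line $L$, and then bound solutions of $r_C=\ell_1$ on $L$ by a quadratic. This is more mechanical and more amenable to being written out symbolically, at the cost of the degenerate bookkeeping you already flagged (lines/half-lines when $\lambda$ or $\nu$ vanishes, and the collinear-foci situation where $\ell_1$ and $\ell_2$ have proportional gradients). One small point to add to your checklist: when the leading coefficients of $r_C^2$ and $\ell_1^2$ on $L$ agree, the resulting equation might be not just linear but identically zero; that would force $L$ to lie inside the (possibly degenerate) conic $\{r_C^2=\ell_1^2\}$, and you should rule this out together with your case~(ii). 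The paper's geometric argument sidesteps this algebraic casework, while your method avoids introducing the auxiliary corner-chamber lemma; both reach the goal.
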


\begin{figure}[h]
  \centering
  \def\svgwidth{10cm}
\begingroup%
  \makeatletter%
  \providecommand\color[2][]{%
    \errmessage{(Inkscape) Color is used for the text in Inkscape, but the package 'color.sty' is not loaded}%
    \renewcommand\color[2][]{}%
  }%
  \providecommand\transparent[1]{%
    \errmessage{(Inkscape) Transparency is used (non-zero) for the text in Inkscape, but the package 'transparent.sty' is not loaded}%
    \renewcommand\transparent[1]{}%
  }%
  \providecommand\rotatebox[2]{#2}%
  \newcommand*\fsize{\dimexpr\f@size pt\relax}%
  \newcommand*\lineheight[1]{\fontsize{\fsize}{#1\fsize}\selectfont}%
  \ifx\svgwidth\undefined%
    \setlength{\unitlength}{270.1715965bp}%
    \ifx\svgscale\undefined%
      \relax%
    \else%
      \setlength{\unitlength}{\unitlength * \real{\svgscale}}%
    \fi%
  \else%
    \setlength{\unitlength}{\svgwidth}%
  \fi%
  \global\let\svgwidth\undefined%
  \global\let\svgscale\undefined%
  \makeatother%
  \begin{picture}(1,0.52130071)%
    \lineheight{1}%
    \setlength\tabcolsep{0pt}%
    \put(0,0){\includegraphics[width=\unitlength,page=1]{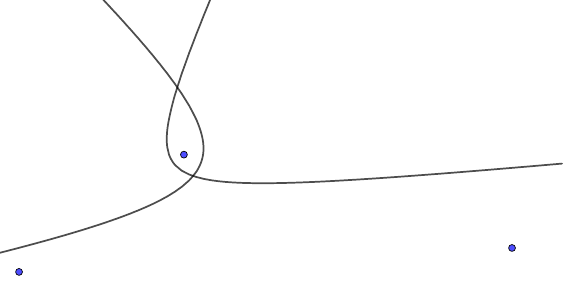}}%
    \put(0.30698824,0.26590961){\color[rgb]{0,0,0}\makebox(0,0)[lt]{\lineheight{1.25}\smash{\begin{tabular}[t]{l}$A_1$\end{tabular}}}}%
    \put(0.02990486,0.05224445){\color[rgb]{0,0,0}\makebox(0,0)[lt]{\lineheight{1.25}\smash{\begin{tabular}[t]{l}$A_3$\end{tabular}}}}%
    \put(0.90097331,0.09680948){\color[rgb]{0,0,0}\makebox(0,0)[lt]{\lineheight{1.25}\smash{\begin{tabular}[t]{l}$A_5$\end{tabular}}}}%
  \end{picture}%
\endgroup%

  \caption{A case of two branches intersecting at two points. The focus
    $A_1$ is the interior focus of both branches.}
  \label{fig:uniqueness}
\end{figure}

Although this result, which appears in \cite{LSFW,XSR}, has a very classical flavor, we could not find any earlier reference. We give an alternative proof below.

Let us start with some definitions. The focus of a hyperbola branch $\calB$ which belongs to (resp. does not belong to) the convex hull of $\calB$ is said to be the \emph{interior} (resp. \emph{exterior})
focus of $\calB$. For example, on Figure~\ref{fig:uniqueness}, $A_1$ is the interior focus of both branches while $A_3$ and $A_5$ are the exterior foci of these branches. Assume that $\calB_1$ is a hyperbola branch with foci $A$ and $C$ and $\calB_2$ is a hyperbola branch with foci $A$ and $E$, with $A,C,E$ distinct. Then any intersection point of $\calB_1$ and $\calB_2$ lies on a hyperbola branch $\calB_3$ with foci $C$ and $E$ (whose equation is obtained by subtracting the equations \eqref{eq:gen_br} for $\calB_1$ and $\calB_2$), and we can also suppose that $\calB_3$ is not a line or a half-line. Then there is at least one of the three points $A$, $C$ or $E$ which is the interior focus of one branch and the exterior focus of another branch. Without loss of generality, we assume that $A$ is the interior focus of $\calB_1$ and the exterior focus of $\calB_2$. We will show that these two branches intersect in at most two points.

Suppose that $\calB_1$ and $\calB_2$ intersect at three distinct points $S,T,U$. As these points belong to a non-degenerate hyperbola branch, they are not aligned. The lines $(ST),(TU),(SU)$ therefore delimit exactly seven open regions in the plane, three of which touch the triangle $STU$ only at one vertex. We call these three regions the \emph{corner chambers} of $S,T,U$.

\begin{lemma}
\label{lemma:chamber}
Let $S,T$ and $U$ be three distinct points on a hyperbola branch $\calB$. Then the exterior focus of $\calB$ belongs to a corner chamber of $S,T,U$, while the interior focus does not.
\end{lemma}

\begin{proof}[Proof of Lemma \ref{lemma:branches_intersection}]
If one of the generalised branches is a line or a half-line, the result easily comes from the fact that a hyperbola and a line have at most two points of intersection.

Suppose now that it is not the case. The proof of Lemma~\ref{lemma:branches_intersection} follows from Lemma \ref{lemma:chamber}, as $A$ should be both in a corner chamber of $S,T,U$ (because $A$ is the exterior focus of $\calB_2$) and not in one (because $A$ is the interior focus of $\calB_1$).
\end{proof}

\begin{proof}[Proof of Lemma~\ref{lemma:chamber}]
For any two distinct points $A,B$ on $\calB$, the line $(AB)$ cuts the interior of the convex hull of $\calB$ into a finite part and an infinite part. The half-plane delimited by $(AB)$ that contains the finite part is called the \emph{exterior half-plane} of $A,B$. It is easy to see that the exterior focus belongs to the exterior half-plane of $A,B$, for instance by noting that this property is invariant by affine transformations of the plane, and is straightforward to prove for the special branch $\{(x,y) \in (0,\infty)^2 \mid xy=1\}$.

Suppose that $T$ belongs to the exterior half-plane of $S,U$ (which is equivalent to saying that $S,T,U$ are met in that order when following the branch $\calB$). Then, applying the previous property to $(S,T)$ and $(T,U)$, we get that the exterior focus has to belong to the corner chamber that touches $T$.

We now consider the interior focus of $\calB$. By convexity of $\calB$, the corner chambers are disjoint from the interior of the convex hull of $\calB$, which by definition contains its interior focus. The result follows.
\end{proof}

We can now prove Proposition \ref{prop:uniqueness}.

\begin{proof}[Proof of Proposition~\ref{prop:uniqueness}]
  By Lemma~\ref{lemma:branches_intersection}, there exists at most two
  such points. Suppose that there are two, $A_0$ and $A'_0$. We claim
  that $A_1,A_3,A_5$ belong to a unique hyperbola branch with foci
  $A_0,A'_0$. Indeed, since $A_0,A'_0$ are on the same hyperbola
  branch with foci $A_1,A_3$, we have
  $A_1A_0 - A_3A_0 = A_1A'_0 - A_3A'_0$. This is equivalent to
  $A_1A_0 - A_1A'_0=A_3A_0-A_3A'_0$, which in turn means that
  $A_1,A_3$ are on the same hyperbola branch with foci $A_0,A'_0$. The
  same holds for $A_5$.

  Therefore, by Lemma~\ref{lemma:chamber}, one of the points
  $A_0,A'_0$ is in a corner chamber of the triangle $A_1A_3A_5$, and
  the other is not. But notice that for any point $M$ that does not belong to the lines
  $(A_1A_3),(A_1A_5),(A_3A_5)$, the cyclic order of the vectors
  $\overrightarrow{MA_1}, \overrightarrow{MA_3},
  \overrightarrow{MA_5}$
  is always the same when $M$ belongs to the union
  of the three corner chambers,
  and the opposite when $M$ is
  outside that union.
However, the cyclic order of the vectors
 $\overrightarrow{A_0A_1}, \overrightarrow{A_0A_3},
 \overrightarrow{A_0A_5}$ should be the same as the cyclic order of the vectors
 $\overrightarrow{A'_0A_1}, \overrightarrow{A'_0A_3},
 \overrightarrow{A'_0A_5}$, since this order should be fixed by the
 proper embedding. Therefore it is impossible for both $A_0$ and
 $A'_0$ to correspond to proper embeddings.
\end{proof}

\subsection{Existence}

The second part of this section consists in proving that, if we start
with a proper $1$-embedding $A_0,A_1,\dots,A_6$ of the left-hand side
of Figure~\ref{fig:YDelta}, then there actually exists a point $A_7$
inducing a proper $1$-embedding of the right-hand side. To show this,
we transform the problem into a linear one by using the
\emph{propagation equations} and \emph{$s$-embeddings} defined by
Chelkak \cite{Chelkak,Chelkak2}. We briefly explain this construction (we refer
to Chelkak's original papers for more details), then give a few
extra properties concerning the ordering of the vertices of the quads
it gives, and finally apply it to our setting.

\subsubsection{Ising model, propagation equations and $s$-embeddings}
\label{sec:ising-model-prop}

Let $G := (V,E)$ be a finite planar graph, in which each edge $e\in E$ carries a positive weight $J_e>0$. The weights $(J_e)_{e\in E}$ are called the coupling constants of the ferromagnetic Ising model on $G$, that is, every spin configuration $\sigma \in \{\pm 1\}^V$ is assigned the Boltzmann weight
\begin{equation}
  w(\sigma) = \exp\left( \sum_{e = \{u,v\} \in E} J_e \sigma_u \sigma_v
  \right)
\end{equation}
and a spin configuration is randomly sampled with probability proportional to its Boltzmann weight. Note however that we will not refer to any statistical mechanical property of the Ising model thereafter; all the proofs are of purely geometric nature.

One checks that, for every $e\in E$ there exists a unique $\theta_e \in (0,\frac{\pi}{2})$ such that
\begin{equation}
\label{eq:defJ}
  J_e = \frac12 \ln\left(\frac{1+\sin\theta_e}{\cos\theta_e}\right).
\end{equation}
We also set
\begin{equation}
\label{eq:defx}
  x_e = \tanh J_e = \tan \frac{\theta_e}{2} \in (0,1).
\end{equation}

\medskip

Let $G^c$ be the weighted graph whose vertices are the \emph{corners} of the faces of $G$, and whose edges are of two types:
\begin{enumerate}
 \item those connecting two corners that correspond to the same vertex and to the same edge $e$ of $G$; such edges carry the weight $\cos\theta_e$;
 \item those connecting two corners that correspond to the same face and to the same edge $e$ of $G$; such edges carry the weight $\sin\theta_e$.
\end{enumerate}
See Figure~\ref{fig:gc} for an illustration. There exists a double cover $\dbc$ of $G^c$ that branches around every edge, vertex and face of $G$, graphically represented around an edge in Figure~\ref{fig:propagation} (see also Figure~\ref{fig:st_propagation}), which inherits the edge weights of $G^c$.

\begin{figure}[h]
  \centering
  \begin{tikzpicture}[scale=0.8]
\clip(-7,-3) rectangle (-0.4,1);
\draw [line width=2pt,gray] (-5.5,-1)-- (-1.5,-1);
\draw [line width=2pt,gray] (-5.5,-1)-- (-7.32,1.02);
\draw [line width=2pt,gray] (-5.5,-1)-- (-7,-3);
\draw [line width=2pt,gray] (-1.5,-1)-- (0.22962035925127702,-0.016947265540150244);
\draw [line width=2pt,gray] (-1.5,-1)-- (0.07419079774291935,-0.6559354628522881);
\draw [line width=2pt,gray] (-1.5,-1)-- (0,-2);
\draw [dashed] (-5,0)-- (-5,-2);
\draw [dashed] (-5,-2)-- (-2,-2);
\draw [dashed] (-2,-2)-- (-2,0);
\draw [dashed] (-2,0)-- (-5,0);
\draw [dashed] (-5,0)-- (-6.5,-1);
\draw [dashed] (-5,-2)-- (-6.5,-1);
\draw [dashed] (-6.5,-1)-- (-7.783637034068496,0.4407064433455701);
\draw [dashed] (-5,0)-- (-6.117086735673328,1.399188739313777);
\draw [dashed] (-6.5,-1)-- (-7.533222740527253,-2.305215809968752);
\draw [dashed] (-5,-2)-- (-5.823497563935319,-3.1082685444286007);
\draw [dashed] (-2,0)-- (-0.6338772046840433,-0.6386655115735816);
\draw [dashed] (-0.6338772046840433,-0.6386655115735816)-- (-0.616607253405337,-1.174034001213481);
\draw [dashed] (-0.616607253405337,-1.174034001213481)-- (-2,-2);
\draw [dashed] (-2,-2)-- (-0.8411166200285203,-2.9010291290841237);
\draw [dashed] (-2,0)-- (-0.279843203470562,0.8206453714771115);
\draw [dashed] (-0.6338772046840433,-0.6386655115735816)-- (0.5,0);
\draw [dashed] (-0.616607253405337,-1.174034001213481)-- (0.7736238245305288,-0.733650243606467);
\draw [dashed] (-0.616607253405337,-1.174034001213481)--
(0.8081637270879417,-1.9684517600339766);
\draw (-3.5,-1) node [above] {$e$};
\draw (-3.5,0) node [above] {$\sin \theta_e$};
\draw (-5,-1) node [above] {$\cos \theta_e$};
\end{tikzpicture}
  \caption{The corner graph $G^c$ (dashed) around an edge $e$ of $G$
    (solid).}
  \label{fig:gc}
\end{figure}
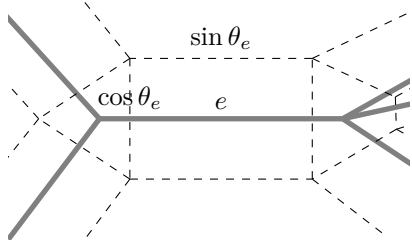

Let $V^{\times}$ be the vertices of $\dbc$. We say that a function $X: V^{\times} \to \C$ satisfies the \emph{propagation equation} if, for every $v \in V^{\times}$ with neighbours $v',v'' \in V^{\times}$ around an edge $e$ like in Figure~\ref{fig:propagation},
\begin{equation}
  X_v = \sin\theta_e X_{v'} + \cos\theta_e X_{v''}.
\end{equation}
It is easy to check that if $X$ satisfies the propagation equation, its value is multiplied by $-1$ whenever we change sheets above a vertex of $G^c$.
\begin{figure}[h]
  \centering
  \begin{tikzpicture}[scale=0.6]
\draw [line width=2pt,gray] (0,0) -- (7,0);
\draw [dashed] (1,2) -- (6,2) -- (6,-2) -- (1.5,-1.5) -- (1.5,1.5) --
(5.5,1.5) -- (5.5,-1.5) -- (1,-2) -- cycle;
\draw (1,2) node [above left] {$v$};
\draw (6,2) node [above right] {$v'$};
\draw (1,-2) node [below left] {$v''$};
\draw (3.5,0) node [above] {$e$};
\end{tikzpicture}
  \caption{The double cover $\dbc$ around the edge $e$ of $G$.}
  \label{fig:propagation}
\end{figure}

\medskip

If $X$ is a solution to the propagation equation, then one can
construct a function $\mathcal{S} : G^{\diamond} \to \C$,
which is called the \emph{$s$-embedding} associated to $X$, in the following way.

Fix the image $\mathcal{S}(u_0)$ of a base vertex $u_0$ of $G$ in the plane. Then define $\mathcal{S}$ such that for every vertex $u$ of $G$ and every face $f$ adjacent to it, denoting by $c$ the corner between $u$ and $f$,
\begin{equation}
  \label{eq:incrS}
 \mathcal{S}(f) - \mathcal{S}(u) = X_c^2
\end{equation}
where $X_c$ is any of the two values of $X$ above the corner $c$;
both values produce the same constraint on $\mathcal{S}$. See
Figure~\ref{fig:prop_loc} for an example. Notice that it is not
clear \emph{a priori} that $\mathcal{S}$ is well-defined, as one needs to check at
least that conditions~\eqref{eq:incrS} are closed around an edge, as
in Figure~\ref{fig:prop_loc}. We
will rely on the following result of Chelkak \cite{Chelkak} that
asserts that the $s$-embedding $\mathcal{S}$ is well-defined, and
identifies $s$-embeddings with our notion of $1$-embedding.

\begin{proposition}[\cite{Chelkak}]
\label{prop:chelkak}
For any solution $X$ of the propagation equation such that
$\Re(X),\Im(X)$ are two vectors independent over $\R$, the associated
$s$-embedding $\mathcal{S}$ is well-defined, and is such that every face of $G^{\diamond}$ is sent to a proper $1$-quad in the complex plane. Conversely, for any $1$-embedding $\mathcal{T}$ of $G$, for any edge $e\in E$ let $\theta_e$ be the unique angle in $\left(0,\frac{\pi}{2}\right)$ such that, using the notation of Figure~\ref{fig:1quad},
\begin{equation}
    \tan^2\theta_e = \frac{\cotan \delta + \cotan \beta}{\cotan \alpha
      + \cotan \gamma}.
  \end{equation}
Then $\mathcal{T}$ is an $s$-embedding associated to a solution of the propagation equations on $G$ with parameters $(\theta_e)_{e\in E}$.
\end{proposition}

The plan of our proof of the existence part of the flip property is therefore to translate the initial configuration into a solution of the propagation
equations, then show that one can apply a star-triangle transformation on this solution, and finally go back to embeddings. However, we need more information than what is contained in Proposition~\ref{prop:chelkak}, as we want to keep track of the
orientation of quadrilaterals. This is the aim of the following subsection.

\subsubsection{Orientation of quads in $s$-embeddings}

\begin{figure}[h]
  \centering
  \begin{tikzpicture}[scale=0.7]
\draw [line width=2pt,gray] (0,0) -- (7,0);
\draw [line width=2pt,gray, dashed] (3.5,-2.5) -- (3.5,2.5);
\draw [dashed] (1,2) -- (6,2) -- (6,-2) -- (1.5,-1.5) -- (1.5,1.5) --
(5.5,1.5) -- (5.5,-1.5) -- (1,-2) -- cycle;
\draw (1,2) node [above left] {$d$};
\draw (1.2,1.8) node [below right] {$-d$};
\draw (6,2) node [above right] {$c$};
\draw (5.8,1.8) node [below left] {$-c$};
\draw (1,-2) node [below left] {$a$};
\draw (1.2,-1.8) node [above right] {$-a$};
\draw (6,-2) node [below right] {$b$};
\draw (5.8,-1.8) node [above left] {$-b$};
\draw (3.5,0) node [above left] {$e$};
\draw (0,0) node [left] {$u$};
\draw (7,0) node [right] {$v$};
\draw (3.5,-2.5) node [below] {$x$};
\draw (3.5,2.5) node [above] {$y$};

\draw (9,0) node [] {$\longrightarrow$};
\node [draw=black,fill=black, thick, circle, inner sep=0pt, minimum
size=3pt] at (12,0) {};
\node [draw=black, fill=black, thick, circle, inner sep=0pt, minimum
size=3pt] at (16,0.5) {};
\node [draw=black, fill=white, thick, circle, inner sep=0pt, minimum
size=3pt] at (14,2) {};
\node [draw=black, fill=white, thick, circle, inner sep=0pt, minimum
size=3pt] at (14.8,-1.9) {};

\draw [-{Latex[length=3mm]}] (12,0) -- (14,2);
\draw [-{Latex[length=3mm]}] (12,0) -- (14.8,-1.9);
\draw [-{Latex[length=3mm]}] (16,0.5) -- (14,2);
\draw [-{Latex[length=3mm]}] (16,0.5) -- (14.8,-1.9);

\draw (12,0) node [left] {$\mathcal{S}(u)$};
\draw (16,0.5) node [right] {$\mathcal{S}(v)$};
\draw (14,2) node [above] {$\mathcal{S}(y)$};
\draw (14.8,-1.9) node [below] {$\mathcal{S}(x)$};

\draw (13,1) node [left] {$d^2$};
\draw (15,1.25) node [above right] {$c^2$};
\draw (13.4,-0.9) node [below left] {$a^2$};
\draw (15.4,-0.7) node [right] {$b^2$};

\end{tikzpicture}

  \caption{An edge $e\in E$ with vertices $u,v$ and adjacent faces $x,y$, with a solution of its propagation equation, and the corresponding $s$-embedding, where the numbers $a^2,\ldots,d^2$ are the complex coordinates of the vectors bounding the quad.}
  \label{fig:prop_loc}
\end{figure}
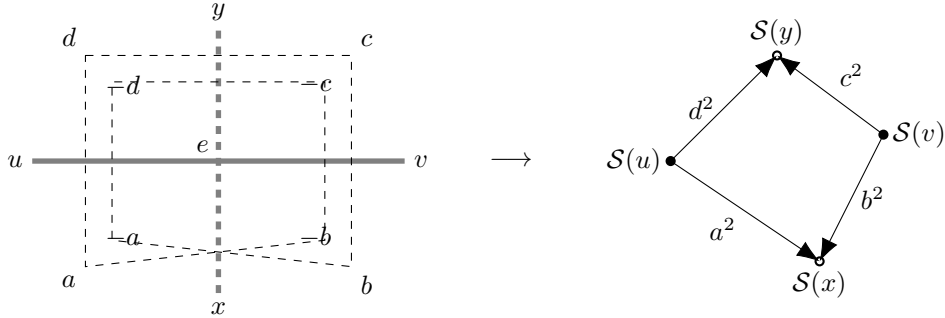

\begin{lemma}
  \label{lemma:orient}
  Let $a,b,c,d \in \C^*$ be a solution to the propagation equation at an edge $e\in E$ with parameter $\theta \in \left(0,\frac{\pi}{2}\right)$, set around $e$ as in Figure~\ref{fig:prop_loc}. Suppose that $b/a \notin \R$. Then the following are equivalent:
  \begin{enumerate}[label=(\roman*)]
  \item The $1$-quad $\mathcal{S}(u) \mathcal{S}(x) \mathcal{S}(v) \mathcal{S}(y)$ is positively oriented;
  \item $\Im(b/a) > 0$;
  \item The cyclic order around the circle of the arguments of $\pm a, \pm b, \pm c, \pm d$ is $(a, \, d, \,c, \,b, \,-a, \,-d, \,-c, \,-b)$ (see Figure~\ref{fig:order}).
  \end{enumerate}
\end{lemma}

\begin{figure}[h]
  \centering
  \def\svgwidth{4cm}
  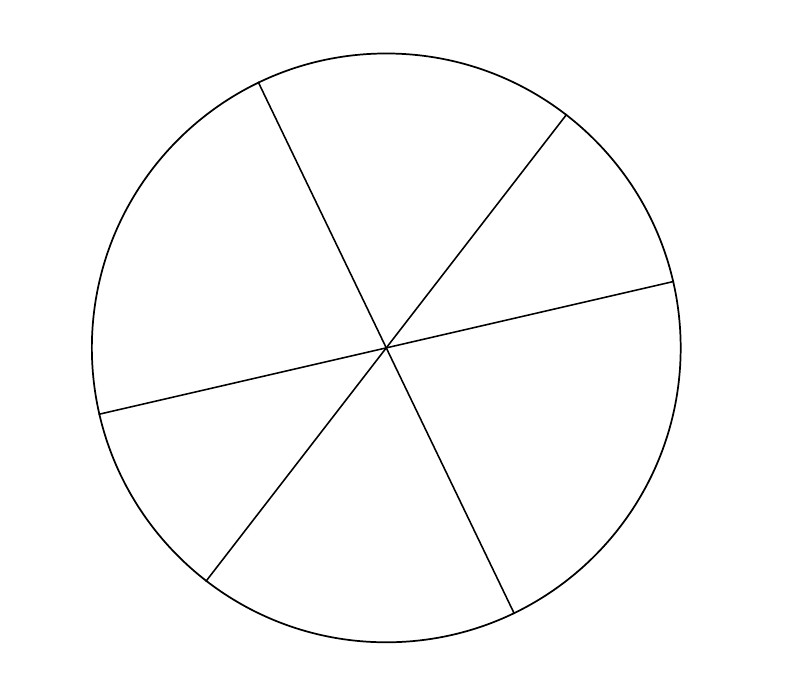
  \caption{Cyclic order of the arguments of the complex numbers
    $a,b,c,d$.}
  \label{fig:order}
\end{figure}

\begin{proof}
The three statements are unaltered if we multiply all the complex numbers $a,b,c,d$ by the same nonzero complex number. Hence we can suppose $a=1$.
Since the quad $\mathcal{S}(u) \mathcal{S}(x) \mathcal{S}(v) \mathcal{S}(y)$ is proper by Proposition~\ref{prop:chelkak}, the sum of the representatives in $(0,2\pi)$ of the oriented angles between pairs of vectors $\widehat{(a^2,d^2)}, \widehat{(b^2,a^2)}, \widehat{(c^2,b^2)}, \widehat{(d^2,c^2)}$ has to be either $2\pi$ if the quad is oriented positively or $6\pi$ if the quad is oriented negatively. In particular, the quad is oriented positively (resp. negatively) if and only if the arguments of $a^2,d^2,c^2,b^2$ (resp. $b^2,c^2,d^2,a^2$) are in that cyclic order around the circle.
  \begin{itemize}
    \item $(iii) \Rightarrow (i)$: Immediate consequence of the previous sentence.
    \item $(ii) \Rightarrow (iii)$:
    Suppose that $\Im(b)>0$. Solving the propagation equation gives
    \begin{equation}
      \begin{split}
        c &= \frac{b}{\cos \theta} + \tan \theta, \\
        d &= \frac{1}{\cos  \theta} + b \tan \theta.
      \end{split}
    \end{equation}
    Thus $c$ and $d$ are positive combinations of $1$ and $b$, so their complex arguments lie between $0$ and the argument of $b$. Moreover, one can compute:
    \begin{equation}
      \Im\left(\frac{c}{d}\right) =  \frac{\cos^2\theta
        \Im(b)}{\left|1+b\sin \theta \right|^2} > 0
    \end{equation}
 and we deduce that the order of the arguments matches the one of Figure~\ref{fig:order}.

    \item $(i) \Rightarrow (ii)$:
Assume that $(ii)$ is not verified, that is, $\Im(b)<0$. Then consider $\bar{b},\bar{c},\bar{d}$, which is still a solution to the propagation equation as this equation has real coefficients. These values do satisfy $(ii)$, hence also $(iii)$ and thus $(i)$ by the previous points. Note that they correspond to a quadrilateral that is the image by a reflection of our desired quad $\mathcal{S}(u) \mathcal{S}(x) \mathcal{S}(v) \mathcal{S}(y)$, and therefore for our initial solution $(i)$ does not hold.
\end{itemize}
\end{proof}

\subsubsection{Star-triangle transformation on propagation equations}

This part consists in rephrasing Baxter's results on the star-triangle transformation of the Ising model (see Section 6.4 of \cite{Baxter:exactly}). It is slightly easier to present it in the triangle-star direction, \emph{i.e.} from the right-hand side to the left-hand side of Figure~\ref{fig:st}, although the converse is also possible.

Let us suppose that the graph $G$ contains a triangle, as on the right-hand side of Figure~\ref{fig:st}. We label its edges with the parameters $\theta_i$ and define $x_i, J_i$ as in \eqref{eq:defJ} and \eqref{eq:defx}. It is possible
to transform the triangle into the star displayed on the left-hand
side, while finding parameters such that both Ising models are coupled
and agree everywhere except at $A_7$. This is called the
\emph{star-triangle transformation}.

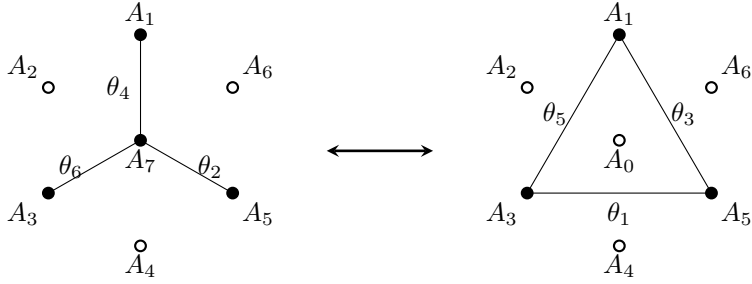
\begin{figure}[h]
  \centering
  \begin{tikzpicture}[scale=0.7]

\begin{scope}[xshift=9cm]
  \coordinate (hh) at (0 ,2) ;
  \coordinate (bd) at (1.732,-1) ;
  \coordinate (bg) at (-1.732,-1) ;
  \coordinate (hd) at ($(hh)+(bd)$) ;
  \coordinate (hg) at ($(hh)+(bg)$) ;
  \coordinate (bb) at ($(bd)+(bg)$) ;
  \node [draw=black, fill=black,thick,circle,inner sep=0pt,minimum size=4pt] at (hh) {};
  \node [draw=black, fill=white,thick,circle,inner sep=0pt,minimum size=4pt] at (hd) {};
  \node [draw=black, fill=white,thick,circle,inner sep=0pt,minimum size=4pt] at (hg) {};
  \node [draw=black, fill=black,thick,circle,inner sep=0pt,minimum size=4pt] at (bd) {};
  \node [draw=black, fill=black,thick,circle,inner sep=0pt,minimum size=4pt] at (bg) {};
  \node [draw=black, fill=white,thick,circle,inner sep=0pt,minimum size=4pt] at (bb) {};
  \node [draw=black, fill=white,thick,circle,inner sep=0pt,minimum size=4pt] at (0,0) {};
  \draw (hh) node [above] {$A_1$};
  \draw (hd) node [above right] {$A_6$};
  \draw (hg) node [above left] {$A_2$};
  \draw (bd) node [below right] {$A_5$};
  \draw (bg) node [below left] {$A_3$};
  \draw (bb) node [below] {$A_4$};
  \draw (0,0) node [below] {$A_0$};
  \draw (hh) -- (bg) -- (bd) -- (hh);
  \draw (-0.8,0.5) node [left] {$\theta_5$};
  \draw (0.8,0.5) node [right] {$\theta_3$};
  \draw (0,-1) node [below] {$\theta_1$};
\end{scope}

\begin{scope}[yshift=0cm]
  \begin{scope}[xshift=0cm]
  \coordinate (hh) at (0 ,2) ;
  \coordinate (bd) at (1.732,-1) ;
  \coordinate (bg) at (-1.732,-1) ;
  \coordinate (hd) at ($(hh)+(bd)$) ;
  \coordinate (hg) at ($(hh)+(bg)$) ;
  \coordinate (bb) at ($(bd)+(bg)$) ;
  \coordinate (cc) at ($(hd) + (bb)- (bd)$);
  \node [draw=black, fill=black,thick,circle,inner sep=0pt,minimum size=4pt] at (hh) {};
  \node [draw=black, fill=white,circle,thick,inner sep=0pt,minimum size=4pt] at (hd) {};
  \node [draw=black, fill=white,thick,circle,inner sep=0pt,minimum size=4pt] at (hg) {};
  \node [draw=black, fill=black,thick,circle,inner sep=0pt,minimum size=4pt] at (bd) {};
  \node [draw=black, fill=black,thick,circle,inner sep=0pt,minimum size=4pt] at (bg) {};
  \node [draw=black, fill=white,thick,circle,inner sep=0pt,minimum size=4pt] at (bb) {};
  \node [draw=black, fill=black,thick,circle,inner sep=0pt,minimum size=4pt] at (cc) {};
  \draw (hh) node [above] {$A_1$};
  \draw (hd) node [above right] {$A_6$};
  \draw (hg) node [above left] {$A_2$};
  \draw (bd) node [below right] {$A_5$};
  \draw (bg) node [below left] {$A_3$};
  \draw (bb) node [below] {$A_4$};
  \draw (0,0) node [below] {$A_7$};
  \draw (hh) -- (cc);
  \draw (bg) -- (cc);
  \draw (bd) -- (cc);
  \draw (0.9,-0.5) node [right] {$\theta_2$};
  \draw (-0.9,-0.5) node [left] {$\theta_6$};
  \draw (0,1) node [left] {$\theta_4$};
\end{scope}
\begin{scope}[xshift=3.5cm, yshift=-0.2cm]
  \draw[>=stealth,<->, line width = 1pt] (0,0) -- (2,0);
\end{scope}
\end{scope}

\end{tikzpicture}
  \caption{Star-triangle transformation on $G$ (black vertices)}
  \label{fig:st}
\end{figure}

\begin{proposition}[\cite{Baxter:exactly}]
\label{prop:Baxter}
    Let $k'\in (0,\infty)$ be defined as
    \begin{equation}
      k' =
      \frac{(1-x_1^2)(1-x_3^2)(1-x_5^2)}{4\sqrt{(1+x_1x_3x_5)(x_1+x_3x_5)(x_3+x_1x_5)(x_5+x_1x_3)}}.
    \end{equation}
    Then the parameters $\theta_2,\theta_4,\theta_6$ obtained by the star-triangle transformation are the unique angles in $(0,\frac{\pi}{2})$ such that
    \begin{equation}
      \label{eq:st_theta}
      \forall i \in \{1,3,5\}, \ \tan \theta_i \tan \theta_{i+3} = \frac{1}{k'}
    \end{equation}
where the labels of the angles are considered modulo $6$.
\end{proposition}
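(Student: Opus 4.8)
The plan is to follow Baxter's derivation (Section~6.4 of \cite{Baxter:exactly}) and then translate it into the $\theta$-parametrization used here. Orient the triangle on the right of Figure~\ref{fig:st} so that for $i\in\{1,3,5\}$ the edge carrying $J_i$ (equivalently $\theta_i$, and $x_i=\tanh J_i$) joins the two vertices among $A_1,A_3,A_5$ different from $A_i$, while the star edge carrying $J_{i+3}$ joins the centre $A_7$ to $A_i$ (indices mod $6$). By definition the transformation is the identity, valid for all spins $\sigma_{A_1},\sigma_{A_3},\sigma_{A_5}\in\{\pm1\}$,
\[
\sum_{\sigma_{A_7}=\pm1}\exp\Bigl(J_4\sigma_{A_7}\sigma_{A_1}+J_6\sigma_{A_7}\sigma_{A_3}+J_2\sigma_{A_7}\sigma_{A_5}\Bigr)=R\,\exp\Bigl(J_1\sigma_{A_3}\sigma_{A_5}+J_3\sigma_{A_1}\sigma_{A_5}+J_5\sigma_{A_1}\sigma_{A_3}\Bigr)
\]
for some $R>0$. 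Evaluating it at the four spin patterns (all equal, or exactly one of $\sigma_{A_1},\sigma_{A_3},\sigma_{A_5}$ reversed) turns it into a system of four equations, each of the form $2\cosh(\pm J_4\pm J_6\pm J_2)=R\exp(\pm J_1\pm J_3\pm J_5)$ with signs fixed by the pattern.

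The first step is to solve this system for $J_2,J_4,J_6,R$ in terms of $J_1,J_3,J_5$. Writing $C=\cosh J_1\cosh J_3\cosh J_5$, the sum of the four left-hand sides factorises as $8\cosh J_2\cosh J_4\cosh J_6$ and the sum of the four right-hand sides as $4RC(1+x_1x_3x_5)$; three further signed combinations give, for each $i\in\{1,3,5\}$, a relation of the shape $8\cosh J_{i+3}\sinh J_{j+3}\sinh J_{k+3}=4RC(x_i+x_jx_k)$, where $\{i,j,k\}=\{1,3,5\}$. Dividing each of these three by the first eliminates $R$ and $C$ and yields $\tanh J_{j+3}\tanh J_{k+3}=\dfrac{x_i+x_jx_k}{1+x_1x_3x_5}$; multiplying two of the three and dividing by the remaining one gives each $\tanh^2 J_{i+3}$ explicitly. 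The elementary polynomial identity
\[
(1+x_1x_3x_5)(x_i+x_jx_k)-(x_j+x_ix_k)(x_k+x_ix_j)=x_i(1-x_j^2)(1-x_k^2)
\]
shows that this value lies in $(0,1)$ whenever $x_1,x_3,x_5\in(0,1)$, so $J_2,J_4,J_6>0$ and hence $\theta_2,\theta_4,\theta_6\in(0,\tfrac\pi2)$ are well-defined; feeding the star so obtained back into the (always valid) summation over $\sigma_{A_7}$ recovers $J_1,J_3,J_5$ and a positive $R$, which proves that the transformation exists.

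For the formula I would now compute $\sinh 2J_i\sinh 2J_{i+3}$ using $\sinh 2J=\tfrac{2\tanh J}{1-\tanh^2 J}$, the expression for $\tanh^2 J_{i+3}$, and the same identity; the factorisation makes the product symmetric in $x_1,x_3,x_5$ and equal to $\dfrac{4\sqrt{(1+x_1x_3x_5)(x_1+x_3x_5)(x_3+x_1x_5)(x_5+x_1x_3)}}{(1-x_1^2)(1-x_3^2)(1-x_5^2)}$, i.e.\ to $1/k'$ for $k'$ as in the statement. Next I record the identity $\sinh 2J=\tan\theta$ whenever $\theta\in(0,\tfrac\pi2)$ and $J=\tfrac12\ln\tfrac{1+\sin\theta}{\cos\theta}$: this is immediate after clearing denominators in $\sinh 2J=\tfrac12(e^{2J}-e^{-2J})=\tfrac12\bigl(\tfrac{1+\sin\theta}{\cos\theta}-\tfrac{\cos\theta}{1+\sin\theta}\bigr)$, since $(1+\sin\theta)^2-\cos^2\theta=2\sin\theta(1+\sin\theta)$. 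It rewrites $\sinh 2J_i\sinh 2J_{i+3}=1/k'$ as $\tan\theta_i\tan\theta_{i+3}=1/k'$ for $i\in\{1,3,5\}$. Uniqueness of $\theta_2,\theta_4,\theta_6$ in $(0,\tfrac\pi2)$ is then immediate, since $\tan$ is a bijection from $(0,\tfrac\pi2)$ onto $(0,\infty)$ and each product $\tan\theta_i\tan\theta_{i+3}$ is a fixed positive number.

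The main obstacle is not conceptual but bookkeeping: keeping the index shifts of Figure~\ref{fig:st} straight (which triangle edge pairs with which star edge), getting the four sign patterns and their combinations right, and verifying the displayed polynomial identity, which is exactly what makes the final expression symmetric and lets it be matched with the stated $k'$. A secondary point deserving a line of care is the existence claim: one must check that the $(\theta_2,\theta_4,\theta_6,R)$ produced by solving the system genuinely satisfies the original identity, equivalently that star$\to$triangle and triangle$\to$star are mutually inverse maps on positive weights.
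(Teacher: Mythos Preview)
Your proposal is correct and follows exactly the route the paper defers to: the paper does not give its own proof of this proposition but merely cites Section~6.4 of \cite{Baxter:exactly}, and your argument is a faithful (and carefully bookkept) reproduction of Baxter's derivation, together with the translation $\sinh 2J_e=\tan\theta_e$ into the $\theta$-parametrization of \eqref{eq:defJ}. There is nothing to add.
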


\begin{remark}
  \label{rem:elliptic}
  This transformation may be expressed in different ways; in this
  remark, we recall a convenient elliptic parametrization due to Baxter
  \cite{Baxter:exactly}, also used in \cite{BoutillierDeTiliereRaschel}. The previous definition of $k'$ naturally comes from the use of an \emph{elliptic modulus} $k \in i\R \cup [0,1)$ such that $k^2 + k'^2 = 1$; see Appendix~\ref{sec:elliptic} for a short introduction to elliptic functions.

We can define elliptic angles $\tau_1,\dots,\tau_6$ and $\theta'_1,\dots,\theta'_6$ by
  \begin{equation}
    \begin{split}
      \tau_i &= \Fel(\theta_i,k), \\
      \theta'_i &= \frac{\pi \tau_i }{2 \Kel(k)}.
    \end{split}
  \end{equation}
where $\Fel$ (resp. $\Kel$) is the incomplete (resp. complete) integral of the first kind, see Appendix~\ref{sec:elliptic} for definitions and details. The normalization is such that both $\theta$ and $\theta'$ variables live in $(0,\frac{\pi}{2})$, while $\tau$ variables live in $(0,K(k))$. Then $k$ can be seen as the only modulus such that the $\theta'$ angles satisfy
  \begin{equation}
    \label{eq:st_sum}
    \theta'_1+\theta'_3+\theta'_5 = \frac{\pi}{2}.
  \end{equation}
In these parameters, the star-triangle transformation simply reads \cite{Baxter:exactly,BoutillierDeTiliereRaschel}
  \begin{equation}
    \label{eq:st_tau}
    \forall i \in \{1,3,5\}, \ \theta'_{i+3} = \frac{\pi}{2} - \theta'_i.
  \end{equation}
  We emphasize that this simple formula requires the introduction of a
  local elliptic modulus, while
  Equations~\eqref{eq:st_theta2},\eqref{eq:st_theta1} do not.
\end{remark}

The graphs $\dbc$ corresponding to the star and to
the triangle configurations are represented in
Figure~\ref{fig:st_propagation}. We show that when the angles $\theta$
are chosen to satisfy the star-triangle relations \eqref{eq:st_theta},
the propagation equations ``seen from the boundary vertices''
$w_1,\dots,w_6$ are the same.

\begin{figure}[h]
  \centering
  \begin{tikzpicture}[scale=0.95]
  \begin{scope}[xshift=7cm]
\draw [line width=2pt,color=gray] (-2.2992330335477806,-2.184431458745066)-- (2.3111103481729165,-2.175925290144105);
\draw [line width=2pt,color=gray] (2.3111103481729165,-2.175925290144105)-- (-0.0014279007847367098,1.8124961142949965);
\draw [line width=2pt,color=gray] (-0.0014279007847367098,1.8124961142949965)-- (-2.2992330335477806,-2.184431458745066);
\draw [dashed] (-2.6905167891919723,-1.469913296264367)-- (-1,1.5);
\draw [dashed] (-1,1.5)-- (-0.1897032205095279,0.8182460573940611);
\draw [dashed] (-0.1897032205095279,0.8182460573940611)-- (-1.5,-1.5);
\draw [dashed] (-1.5,-1.5)-- (-2.2141713475381737,-1.2827775870432316);
\draw [dashed] (-2.2141713475381737,-1.2827775870432316)-- (-0.8446782027835015,1.1159619584276856);
\draw [dashed] (-0.8446782027835015,1.1159619584276856)-- (0.2356052095385069,0.8182460573940611);
\draw [dashed] (0.2356052095385069,0.8182460573940611)-- (-1.2614804642305755,-1.8782093891104805);
\draw [dashed] (-1.2614804642305755,-1.8782093891104805)-- (-2.6905167891919723,-1.469913296264367);
\draw [dashed] (-1.5,-1.5)-- (-1.2699866328315363,-2.490653528379651);
\draw [dashed] (-1.5506901966632392,-2.7628509236103933)-- (-1.2614804642305755,-1.8782093891104805);
\draw [dashed] (-1.5506901966632392,-2.7628509236103933)-- (1.605098354293179,-2.7543447550094324);
\draw [dashed] (1.605098354293179,-2.7543447550094324)-- (1.5,-1.5);
\draw [dashed] (1.5,-1.5)-- (-1.5,-1.5);
\draw [dashed] (-1.2614804642305755,-1.8782093891104805)-- (1.3158886218605153,-1.8782093891104805);
\draw [dashed] (1.3158886218605153,-1.8782093891104805)-- (1.3669256334662794,-2.490653528379651);
\draw [dashed] (1.3669256334662794,-2.490653528379651)-- (-1.2699866328315363,-2.490653528379651);
\draw [dashed] (2.2515671679661917,-1.3168022614470745)-- (1.3158886218605153,-1.8782093891104805);
\draw [dashed] (1.5,-1.5)-- (2.7364187782209513,-1.4614071276634064);
\draw [dashed] (2.7364187782209513,-1.4614071276634064)-- (1.1117405754374585,1.473221039668035);
\draw [dashed] (1.1117405754374585,1.473221039668035)-- (-0.1897032205095279,0.8182460573940611);
\draw [dashed] (0.2356052095385069,0.8182460573940611)-- (0.9416172034182446,1.0989496212257641);
\draw [dashed] (0.9416172034182446,1.0989496212257641)-- (2.2515671679661917,-1.3168022614470745);
\draw [dashed] (1.3158886218605153,-1.8782093891104805)-- (-0.1897032205095279,0.8182460573940611);
\draw [dashed] (1.5,-1.5)-- (0.2356052095385069,0.8182460573940611);

\draw [color=gray] (-1.1,-0.17) node [left] {$\theta_5$};
\draw [color=gray] (0,-2.1) node [below] {$\theta_1$};
\draw [color=gray] (1.1,-0.17) node [right] {$\theta_3$};

\draw (-1,1.5) node [] {$w_1$};
\draw (-2.7,-1.5) node [] {$w_2$};
\draw (-1.55,-2.8) node [] {$w_3$};
\draw (1.55,-2.8) node [] {$w_4$};
\draw (2.7,-1.5) node [] {$w_5$};
\draw (1.1,1.5) node [] {$w_6$};

\draw (-0.2,0.8) node [] {$y_1$};
\draw (-1.2,-1.9) node [] {$y_3$};
\draw (1.5,-1.5) node [] {$y_5$};
\end{scope}

\begin{scope}[]
\draw [line width=2pt,color=gray] (-2.2992330335477806,-2.184431458745066)-- (0.0034831379467993783,-0.849286878198058);
\draw [line width=2pt,color=gray] (0.0034831379467993783,-0.849286878198058)-- (-0.0014279007847367098,1.8124961142949965);
\draw [line width=2pt,color=gray] (0.0034831379467993783,-0.849286878198058)-- (2.3111103481729165,-2.175925290144105);
\draw [dashed] (-0.9467522259950308,-0.3981360525433185)-- (-0.8787028771873452,1.6773690860910926);
\draw [dashed] (-0.8787028771873452,1.6773690860910926)-- (0.8395431802067154,1.6943814232930139);
\draw [dashed] (0.8395431802067154,1.6943814232930139)-- (0.8140246744038332,-0.3896298839423578);
\draw [dashed] (0.8140246744038332,-0.3896298839423578)-- (-0.4959252901441139,-0.610790267567336);
\draw [dashed] (-0.4959252901441139,-0.610790267567336)-- (-0.46190061574027114,1.4051716908603502);
\draw [dashed] (-0.46190061574027114,1.4051716908603502)-- (0.4482594245625233,1.4051716908603502);
\draw [dashed] (0.4482594245625233,1.4051716908603502)-- (0.40572858155771985,-0.610790267567336);
\draw [dashed] (0.40572858155771985,-0.610790267567336)-- (-0.9467522259950308,-0.3981360525433185);
\draw [dashed] (-2.392800888158349,-1.3933577788557205)-- (-1.6272457140718866,-2.558702877187337);
\draw [dashed] (-1.6272457140718866,-2.558702877187337)-- (0.03145716311544922,-1.6145181624806988);
\draw [dashed] (0.03145716311544922,-1.6145181624806988)-- (-0.4959252901441139,-0.610790267567336);
\draw [dashed] (-0.4959252901441139,-0.610790267567336)-- (-1.8824307721007074,-1.5464688136730131);
\draw [dashed] (-1.8824307721007074,-1.5464688136730131)-- (-1.5166655222593974,-2.1333944471393016);
\draw [dashed] (-1.5166655222593974,-2.1333944471393016)-- (0.005938657312567138,-1.2147282382355458);
\draw [dashed] (0.005938657312567138,-1.2147282382355458)-- (-0.9467522259950308,-0.3981360525433185);
\draw [dashed] (-0.9467522259950308,-0.3981360525433185)-- (-2.392800888158349,-1.3933577788557205);
\draw [dashed] (0.03145716311544922,-1.6145181624806988)-- (1.7326908833075885,-2.6097398887931007);
\draw [dashed] (1.7326908833075885,-2.6097398887931007)-- (2.557789237600776,-1.4699132962643668);
\draw [dashed] (2.557789237600776,-1.4699132962643668)-- (0.8140246744038332,-0.3896298839423578);
\draw [dashed] (0.8140246744038332,-0.3896298839423578)-- (0.005938657312567138,-1.2147282382355458);
\draw [dashed] (0.005938657312567138,-1.2147282382355458)-- (1.690160040302785,-2.175925290144105);
\draw [dashed] (1.690160040302785,-2.175925290144105)-- (2.0474191215431343,-1.580493488076856);
\draw [dashed] (2.0474191215431343,-1.580493488076856)-- (0.40572858155771985,-0.610790267567336);
\draw [dashed] (0.40572858155771985,-0.610790267567336)--
(0.03145716311544922,-1.6145181624806988);

\draw [color=gray] (0.1,0.5) node [left] {$\theta_4$};
\draw [color=gray] (-1,-1.5) node [above] {$\theta_6$};
\draw [color=gray] (1,-1.5) node [above] {$\theta_2$};

\draw (-0.9,1.7) node [] {$w_1$};
\draw (-2.4,-1.4) node [] {$w_2$};
\draw (-1.6,-2.6) node [] {$w_3$};
\draw (1.7,-2.6) node [] {$w_4$};
\draw (2.6,-1.5) node [] {$w_5$};
\draw (0.8,1.7) node [] {$w_6$};

\draw (-1.1,-0.4) node [] {$z_2$};
\draw (0,-1.8) node [] {$z_4$};
\draw (1,-0.4) node [] {$z_6$};

\end{scope}
\end{tikzpicture}

  \caption{Star-triangle transformation of $\dbc$.}
  \label{fig:st_propagation}
\end{figure}

\begin{proposition}
  \label{prop:st_prop}
  Suppose that $\theta_1,\dots,\theta_6 \in (0,\frac{\pi}{2})$ satisfy the star-triangle relations \eqref{eq:st_theta}. Let $(w_1,w_2,w_3,w_4,w_5,w_6,y_1,y_3,y_5)$ be a solution of the propagation equation on the ``triangle'' graph on the right of Figure~\ref{fig:st_propagation}. Then there exists a unique triple $(z_2,z_4,z_6)$ such that $(w_1,w_2,w_3,w_4,w_5,w_6,z_2,z_4,z_6)$ is
  a solution of the propagation equation on the ``star'' graph on the left. Conversely, for any solution
  $(w_1,w_2,w_3,w_4,w_5,w_6,z_2,z_4,z_6)$ on the left, there exists a unique triple $y_1,y_3,y_5$ such that
  $(w_1,w_2,w_3,w_4,w_5,w_6,y_1,y_3,y_5)$ is a solution on the right.
\end{proposition}

\begin{proof}
It is easy to see that any values of $(y_1,y_3,y_5)$ uniquely characterize the solution of the propagation equation on the triangle graph. Thus, the set of possible vectors $(w_1,\dots, w_6)$ is a $3$-dimensional subspace $V$. By setting $(y_1,y_3,y_5)$ to be the elements of the canonical basis of $\C^3$ and solving the propagation equation, we get a basis of $V$:
  \begin{equation}
    u_1=\begin{pmatrix} 1/\cos \theta_5 \\ \tan \theta_5 \\ 0 \\
      0 \\ \tan\theta_3 \\ 1/\cos\theta_3
    \end{pmatrix},
    u_3=\begin{pmatrix} \tan \theta_5 \\ 1/\cos \theta_5 \\ 1/\cos\theta_1 \\
      \tan\theta_1 \\ 0 \\ 0
    \end{pmatrix},
    u_5=\begin{pmatrix} 0 \\ 0 \\ \tan \theta_1 \\
      1/\cos \theta_1 \\ 1/\cos\theta_3 \\ \tan\theta_3
    \end{pmatrix}.
  \end{equation}
  Similarly, the set of values of $(w_1,\dots,w_6)$ for the star graph
  is a subspace $V'$ with basis
  \begin{equation}
    v_2=\begin{pmatrix} 1/\sin \theta_4 \\ 1/\sin \theta_6 \\ 1/\tan \theta_6 \\
      0 \\ 0 \\ 1/\tan \theta_4
    \end{pmatrix},
    v_4=\begin{pmatrix} 0 \\ 1/\tan \theta_6 \\ 1/\sin \theta_6 \\ 1/\sin\theta_2 \\
      1/\tan\theta_2 \\ 0
    \end{pmatrix},
    v_6=\begin{pmatrix} 1/\tan \theta_4 \\ 0 \\ 0 \\
      1/\tan \theta_2 \\ 1/\sin\theta_2 \\ 1/\sin\theta_4
    \end{pmatrix}.
  \end{equation}

We want to show that these subspaces are equal. By an argument of dimension, we just need to show that $v_2,v_4,v_6 \in V$. Let us do it for $v_2$, the other cases being symmetric. We claim that
  \begin{equation}
    \frac{1}{\cos \theta_3 \tan \theta_4} u_1 + \frac{1}{\cos \theta_1
      \tan \theta_6} u_3 - \frac{\tan \theta_1}{\tan \theta_6} u_5  = v_2.
  \end{equation}
This is checked immediately for the third and fourth entries; the fifth and sixth entries are similar after noting that by \eqref{eq:st_theta},
  \begin{equation}
    \label{eq:tan_ratio}
    \frac{\tan \theta_1}{\tan \theta_6} = \frac{\tan \theta_3}{\tan \theta_4}.
  \end{equation}
The first and second entries are a bit more tedious.
  For the first one, we want to show that
  \begin{equation}
    \label{eq:entry_1}
    \frac{1}{\cos \theta_3 \tan \theta_4 \cos \theta_5} + \frac{\tan
      \theta_5}{\cos \theta_1
      \tan \theta_6} = \frac{1}{\sin \theta_4}.
  \end{equation}
  In terms of the elliptic  variables ${\tau}_i = F(\theta_i,k)$ (see
  Appendix \ref{sec:elliptic}), this amounts to showing (we omit the
  elliptic parameter $k$):
  \begin{equation}
    \label{eq:ell_c}
    \nc({\tau}_3)\cs({\tau}_4)\nc({\tau}_5)
    + \nc({\tau}_1) \sc({\tau}_5)
    \cs({\tau}_6) - \ns({\tau}_4) = 0.
  \end{equation}
  By \eqref{eq:st_tau}, ${\tau}_4 = \Kel(k) - {\tau}_1$
  and ${\tau}_6 = \Kel(k) - {\tau}_3$, and by
  \eqref{eq:st_sum}, ${\tau}_1 = \Kel(k) - ({\tau}_3 +
  {\tau}_5)$. Thus we can express all the arguments in
  terms of ${\tau}_3,{\tau}_5$. Using the change
  of arguments in elliptic functions \eqref{eq:ell_change_var}, the left-hand side of \eqref{eq:ell_c} is
  equal to
  \begin{equation}
    \begin{split}
      & \nc({\tau}_3)\nc({\tau}_5)\cs({\tau}_3+{\tau}_5)
      + \sc({\tau}_3) \sc({\tau}_5)
      \ds({\tau}_3 + {\tau}_5)
      - \ns({\tau}_3+{\tau}_5) \\
      =
      &\nc({\tau}_3)\nc({\tau}_5)\ns({\tau}_3+{\tau}_5)
      \times \\
      & \ \ \ \left( \cn({\tau}_3+{\tau}_5) +
        \sn({\tau}_3)\sn({\tau}_5)
        \dn({\tau}_3+{\tau}_5) -
        \cn({\tau}_3)\cn({\tau}_5) \right).
    \end{split}
  \end{equation}
  By \eqref{eq:ell_add}, this is equal to zero.

  For the second entry, we want to show that
  \begin{equation}
    \label{eq:entry_2}
     \frac{\tan
      \theta_5}{\cos \theta_3
      \tan \theta_4} + \frac{1}{\cos \theta_1 \tan \theta_6 \cos \theta_5}= \frac{1}{\sin \theta_6}.
  \end{equation}
  By \eqref{eq:st_theta}, $\frac{\tan \theta_5}{\tan \theta_4} =
  \frac{\tan \theta_1}{\tan \theta_2}$. Hence \eqref{eq:entry_2} is
  equivalent to
  \begin{equation}
     \frac{\tan
      \theta_1}{\cos \theta_3
      \tan \theta_2} + \frac{1}{\cos \theta_1 \tan \theta_6 \cos \theta_5}= \frac{1}{\sin \theta_6}.
  \end{equation}
  Up to a cyclic shift, this is the same as \eqref{eq:entry_1}, so it
  holds by the previous discussion.
\end{proof}

As a byproduct of the previous proof, we obtain Theorem~\ref{thm:newformula}, which provides formulas expressing the change of $\theta$ parameters in the star-triangle transformation. These formulas are much simpler than the classical computation described in Proposition \ref{prop:Baxter} and to the best of our knowledge, they are new.

\begin{proof}[Proof of Theorem~\ref{thm:newformula}]
For $i=4$, this follows from combining equations \eqref{eq:tan_ratio} and \eqref{eq:entry_1}. Cyclic shifts of indices give the cases $i=2$ and $i=6$.

To obtain the other three values of $i$, we apply the Kramers-Wannier duality of the Ising model \cite{KramersWannier}, which has the effect of transforming the variables $\theta_i$ into $\frac{\pi}{2}-\theta_i$. In this duality, the star graph becomes its dual, \emph{i.e.} a triangle, and vice-versa. Hence the formula can be deduced from the previous one by changing sines into cosines and vice-versa.
\end{proof}

\medskip

We now have all the elements to prove the unique proper flip property of $1$-embeddings.

\begin{proof}[Proof of Theorem~\ref{theo:1quad}]
  We start with $A_0,A_1,\dots,A_6$, a proper embedding of the left-hand side of Figure~\ref{fig:YDelta}. As uniqueness is a consequence of Proposition~\ref{prop:uniqueness}, we just have to prove that there exists a point $A_7$ such that $A_1,A_2,\dots, A_7$ is a proper embedding of the right-hand side.

By Proposition~\ref{prop:chelkak}, there exists a solution
$(w_1,w_2,w_3,w_4,w_5,w_6,y_1,y_3,y_5)$ of the propagation equation as
on the left-hand side of Figure~\ref{fig:st_propagation} such that the
points $A_0,A_1,\dots,A_6$ are the $s$-embedding of this
solution. Hence, by Proposition~\ref{prop:st_prop}, there exists
$(z_2,z_4,z_6)$ such that $(w_1,w_2,w_3,w_4,w_5,w_6,z_2,z_4,z_6)$ is a
solution to the propagation equation of the right-hand side of
Figure~\ref{fig:st_propagation}. Let us consider its $s$-embedding. It
has the same boundary as the initial one --- hence the points
$A_1,\dots,A_6$ are unchanged --- and we have a new point $A_7$. It
remains to prove that the three new $1$-quads are proper and oriented
as on the right-hand side of Figure~\ref{fig:YDelta}. Then
  $w_1/w_2$ is not a real number. Indeed, if it were the case, the
  propagation equations would imply that the variables
  $(w_1,w_2,w_3,w_4,w_5,w_6,z_2,z_4,z_6)$ all have the same
  argument, and the initial embedding would not be proper. Moreover the initial quad $A_1A_2A_3A_0$ is proper and positively oriented. Hence we can apply Lemma~\ref{lemma:orient} to obtain the ordering of the arguments of $w_1,w_2,y_3,y_1$. By doing the same for the other two initial quads, the order of the arguments of the variables $w_i$ and $y_i$ is that on the left of Figure~\ref{fig:order_x}.

  \begin{figure}[h]
    \centering
    \def\svgwidth{12cm}
    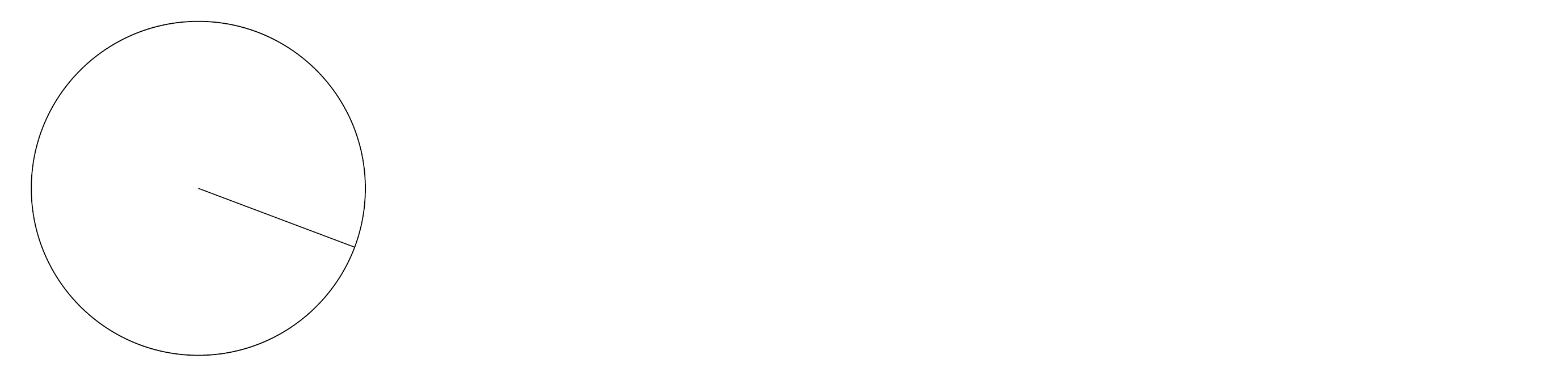
    \caption{The cyclic order of the arguments of the variables $w_i$ and $y_i$.}
    \label{fig:order_x}
  \end{figure}

  Let us prove that the new (proper) quad $A_1A_2A_7A_6$ is positively oriented. By Lemma~\ref{lemma:orient}, it is enough to prove that $\Im(w_6/w_1)>0$. If this is not the case, then we are in the situation of the second configuration of Figure~\ref{fig:order_x}, where all the arguments are included in a half-circle. Therefore we can get the order of the arguments of the $w_i^2$, as in the picture on the right of Figure~\ref{fig:order_x}. However, as in the proof of Lemma~\ref{lemma:orient}, the successive internal angles of the hexagon $A_1A_2A_3A_4A_5A_6$ can be expressed as the successive oriented angles in direct order in this last figure, and in particular their sum is $2\pi$. This contradicts the fact that the hexagon is non-crossed, as the sum should be $4\pi$. By symmetry, the three new quads are properly oriented, which concludes the proof.
\end{proof}

\begin{remark}
The so-called \emph{exact bosonization} operation provides a map from
two independent Ising models on some planar graph $G$ to a bipartite dimer model on a modified graph $\tilde{G}$ \cite{Dubedat}. Under this correspondence, the Ising star-triangle move corresponds to the composition of six local moves for the dimer model called urban renewals (\cite{KenyonPemantle}, Fig. 6). On the other hand, embeddings as centers of circle patterns are known to be adapted to bipartite dimer models and the counterpart of urban renewal for these geometric objects is called the Miquel move \cite{KLRR}. From all this, we conclude that it is possible to write the star-triangle move for $s$-embeddings as a composition of six Miquel moves.
\end{remark}

\section{The cube move for $\alpha>1$}
\label{sec:>1}

In this section, we prove that the set of $\alpha$-realizations for
$\alpha>1$ satisfies the flip property. In order to achieve this, we
first study basic properties of construction curves, namely (un)boundedness, connectedness and asymptotic behaviour.

\subsection{Properties of construction curves}

Recall that construction curves for
$\alpha \in \{-\infty,0,1,2,+\infty\}$ are completely characterized by
Proposition~\ref{prop:special_construction}.  In the rest of this
section, we restrict ourselves to $\alpha \in \R\setminus\{0,1,2\}$.

Without loss of generality, to study an $\alpha$-construction curve with foci $A,B$, we may assume that $A = (0,-1)$ and $B=(0,1)$. Then any $\alpha$-construction curve is characterized by a parameter $\lambda \in \R$, such that the
curve is the set of points
$\left\{ M \in \R^2 \mid MA^\alpha - MB^\alpha = \lambda
\right\}$. Hence, it is a level set of the function
$F_\alpha: M\mapsto MA^\alpha - MB^\alpha$, or
\begin{align*}
F_{\alpha}(x,y)=\left( x^2+(y+1)^2 \right)^{\alpha/2} - \left( x^2+(y-1)^2 \right)^{\alpha/2}.
\end{align*}

We denote this curve by $C(\alpha,\lambda):=F_\alpha^{-1}(\{\lambda\})$. Remark that when $\lambda=0$ the curve $C(\alpha,0)$ is $P(AB)$, which in that case is the horizontal axis. Moreover, changing $\lambda$ into $-\lambda$ has the effect of reflecting the curve across $P(AB)$.
Finally, note that the curve may be empty for some values of $\alpha$ and $\lambda$.

The following result is immediate:
\begin{lemma}
\label{lemma:sym}
Let $M$ be a point distinct from $A,B$. Let $M'$ be the image of $M$ by the reflection across $(AB)$, and $M''$ the image of $M$ by the reflection across $P(AB)$. Then one has $F_{\alpha}(M) = F_{\alpha}(M') = -F_{\alpha}(M'')$. In particular, $F_{\alpha}$ is the zero function on $P(AB)$. Moreover $F_{\alpha}$ has a constant sign on each half-plane bounded by $P(AB)$.
\end{lemma}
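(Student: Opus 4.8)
The plan is to read everything off the explicit formula
\[
F_\alpha(x,y)=\bigl(x^2+(y+1)^2\bigr)^{\alpha/2}-\bigl(x^2+(y-1)^2\bigr)^{\alpha/2},
\]
using that, with the normalization $A=(0,-1)$, $B=(0,1)$, the line $(AB)$ is the vertical axis and $P(AB)$ is the horizontal axis, so that the reflection across $(AB)$ is $(x,y)\mapsto(-x,y)$ and the reflection across $P(AB)$ is $(x,y)\mapsto(x,-y)$.

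First I would verify the two reflection identities. The variable $x$ enters $F_\alpha$ only through $x^2$, hence $F_\alpha(M')=F_\alpha(-x,y)=F_\alpha(x,y)=F_\alpha(M)$; equivalently, the reflection across $(AB)$ fixes both foci, so $MA=M'A$ and $MB=M'B$. Replacing $y$ by $-y$ exchanges the two summands of $F_\alpha$, since $(-y+1)^2=(y-1)^2$ and $(-y-1)^2=(y+1)^2$; thus $F_\alpha(M'')=-F_\alpha(M)$, which one can also see from the fact that the reflection across $P(AB)$ swaps the foci $A$ and $B$, so that $M''A=MB$ and $M''B=MA$. Taking $M\in P(AB)$, one has $M=M''$ and therefore $F_\alpha(M)=-F_\alpha(M)$, so $F_\alpha$ vanishes on $P(AB)$.

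For the sign statement I would compute $MA^2-MB^2=(y+1)^2-(y-1)^2=4y$, so $MA>MB$ precisely on the open half-plane $\{y>0\}$ and $MA<MB$ on $\{y<0\}$. Since in this section $\alpha\in\R\setminus\{0,1,2\}$, the map $t\mapsto t^\alpha$ on $(0,\infty)$ is strictly monotone --- increasing if $\alpha>0$, decreasing if $\alpha<0$ --- so $F_\alpha(M)=MA^\alpha-MB^\alpha$ is nonzero and of constant sign on each of the two open half-planes bounded by $P(AB)$ (positive on $\{y>0\}$ when $\alpha>0$ and negative there when $\alpha<0$), the two signs being opposite by the identity $F_\alpha(M'')=-F_\alpha(M)$. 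Here $M$ is assumed distinct from $A$ and $B$, so $MA$ and $MB$ are both positive and no issue of definition arises; in particular continuity of $F_\alpha$ is not even needed.

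There is no real obstacle: the lemma is purely a matter of the symmetries of the defining formula, the only small point being to separate the cases $\alpha>0$ and $\alpha<0$ when determining the sign.
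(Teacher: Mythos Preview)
Your proof is correct and is exactly the verification the paper has in mind: the lemma is stated as ``immediate'' with no proof given, and your argument simply unpacks the symmetries of the explicit formula for $F_\alpha$ together with the strict monotonicity of $t\mapsto t^\alpha$ on $(0,\infty)$ for $\alpha\neq 0$.
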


\begin{corollary}
For $\alpha \in\R^*$ and $\lambda > 0$, $C(\alpha,\lambda)$ is symmetric with respect to the axis $(AB)$ and remains on one side of $P(AB)$ (namely, the side containing $B$ if $\alpha>0$ and the side containing $A$ if $\alpha<0$).
\end{corollary}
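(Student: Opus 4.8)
The plan is to read off both assertions directly from Lemma~\ref{lemma:sym}, so the proof will be short. First I would dispatch the symmetry with respect to $(AB)$: by that lemma $F_\alpha(M)=F_\alpha(M')$, where $M'$ is the reflection of $M$ across $(AB)$, hence the level set $C(\alpha,\lambda)=F_\alpha^{-1}(\{\lambda\})$ is invariant under this reflection. That already gives the first claim (and it holds trivially if the curve is empty).

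For the second claim, the only real content is to pin down the sign of $F_\alpha$ on the open half-plane bounded by $P(AB)$ that contains $B$. I would note that any point $M$ strictly on the $B$-side of $P(AB)$ with $M\neq A,B$ satisfies $0<MB<MA$, since lying strictly on the $B$-side of the perpendicular bisector of $[AB]$ means exactly being closer to $B$ than to $A$. As $t\mapsto t^{\alpha}$ is increasing on $(0,\infty)$ when $\alpha>0$ and decreasing when $\alpha<0$, this yields $F_\alpha(M)=MA^{\alpha}-MB^{\alpha}>0$ for $\alpha>0$ and $F_\alpha(M)<0$ for $\alpha<0$. By Lemma~\ref{lemma:sym}, $F_\alpha$ vanishes identically on $P(AB)$ and has constant sign on each of the two half-planes, so it takes the opposite sign on the $A$-side. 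Consequently, for $\lambda>0$ the equation $F_\alpha(M)=\lambda$ can only be satisfied on the $B$-side when $\alpha>0$ and on the $A$-side when $\alpha<0$, which is precisely the asserted containment.

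I do not expect any genuine obstacle: everything reduces to Lemma~\ref{lemma:sym} once the correct side is identified, and the one point requiring a little care is that the inequality $MB<MA$ valid on the $B$-side translates, via the monotonicity of $t\mapsto t^{\alpha}$, into $F_\alpha>0$ or $F_\alpha<0$ according to the sign of $\alpha$. An alternative, even lighter route is to evaluate $F_\alpha$ at a single convenient witness point on the $B$-side and invoke the constant-sign part of Lemma~\ref{lemma:sym} to propagate the sign to the whole half-plane, avoiding any mention of monotonicity but relying slightly more on the lemma.
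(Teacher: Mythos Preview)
Your proof is correct and matches the paper's approach: the corollary is stated there without proof, as an immediate consequence of Lemma~\ref{lemma:sym}, and your argument spells out precisely the short sign computation the reader is expected to supply.
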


In order to understand the level sets of the function $F_\alpha$, let us study its profile. As a result of the symmetries of Lemma~\ref{lemma:sym}, it is sufficient to do so in the quadrant $\left(\R_+\right)^2$.

\begin{lemma}
\label{lemma:study_F}
For any $M = (x_M,y_M) \in \left(\R_+\right)^2$,
\begin{itemize}
\item[(i)] if $\alpha<0$:
\begin{itemize}
\item $F_\alpha$ is $C^{\infty}$ on $\R^2 \setminus \{A,B\}$.

\item $\lim_{M \to B} F_{\alpha}(M) = -\infty$ and $\lim_{x_M^2+y_M^2\to \infty} F_{\alpha}(M) = 0$.

\item if $y_M>0$ and $M\neq B$, then $F_{\alpha}(M) < 0$.

\item if $x_M>0$ and $y_M>0$, $\frac{\partial F_\alpha}{\partial x}(M) > 0$.

\item if $x_M=0$: if $y_M<1$, $\frac{\partial F_\alpha}{\partial y}(M) < 0$; if $y_M>1$, $\frac{\partial F_\alpha}{\partial y}(M) > 0$.
\end{itemize}

\item[(ii)] if $0<\alpha<1$:

\begin{itemize}
\item $F_\alpha$ is continuous on $\R^2$, and $C^{\infty}$ on $\R^2 \setminus \{A,B\}$.

\item $\lim_{x_M^2+y_M^2\to \infty} F_{\alpha}(M) = 0$.

\item if $y_M>0$, $F_{\alpha}(M) > 0$.

\item if $x_M>0$ and $y_M>0$, $\frac{\partial F_\alpha}{\partial x}(M) < 0$.

\item if $x_M=0$: if $y_M<1$, $\frac{\partial F_\alpha}{\partial y}(M) > 0$; if $y_M>1$, $\frac{\partial F_\alpha}{\partial y}(M) < 0$.

\end{itemize}

\item[(iii)] if $\alpha>1$, with $\alpha \neq 2$:

\begin{itemize}

\item $F_\alpha$ is $C^1$ on $\R^2$, and $C^{\infty}$ on $\R^2 \setminus \{A,B\}$.

\item for any fixed $x_M \geq 0$, $\lim_{y_M\to \infty} F_{\alpha}(M) = +\infty$.

\item for $y_M>0$, $F_{\alpha}(M) > 0$.

\item if $x_M>0$ and $y_M>0$, for $\alpha<2$, $\frac{\partial F_\alpha}{\partial x}(M) < 0$ and for $\alpha>2$, $\frac{\partial F_\alpha}{\partial x}(M) > 0$.

\item if $x_M,y_M \geq 0$, $\frac{\partial F_\alpha}{\partial y}(M) > 0$.
\end{itemize}
\end{itemize}
\end{lemma}

\begin{proof}
Firstly, all the claims on the regularity of $F_\alpha$ are clear. Furthermore, the sign of this function on the quadrant $(\R_+)^2$ is apparent as well, since $MA \geq MB$ with equality only if $x_M=0$.

Let us now deal with the asymptotic behaviour of $F_\alpha$. For $\alpha<0$, $MB^\alpha$ goes to $+\infty$ as $M \rightarrow B$ and therefore $\lim F_\alpha = -\infty$. On the other hand, for any $\alpha<1$, notice that by the mean value theorem, for any $M$ there exists a $u \in [MA,MB]$ such that
\[ F_{\alpha}(M) = \left(MA-MB\right) \alpha u^{\alpha-1} \leq \alpha AB . MB^{\alpha - 1},\]
where we used the triangular inequality. This goes to $0$ as $MB$ grows. In the case $\alpha>1$, let $x_M$ be fixed. For any large enough $y_M$, by the mean value theorem, there exists a $u\in [y_M-1,y_M+1]$ such that
\[F_{\alpha}(M) = u \alpha \left(x^2+u^2\right)^{\alpha/2 - 1}.\]
For large $y_M$, this is equivalent to $\alpha y_M^{\alpha-1}$ and so tends to $+\infty$.

We finally consider the partial derivatives of $F_\alpha$. For any $M \in (\R_+^*)^2$, one can compute:
\[\frac{\partial F_\alpha}{\partial x}(M) = \alpha x_M \left(MA^{\alpha-2} - MB^{\alpha-2}\right)\]
and its sign is apparent.
Regarding the second partial derivative of $F_\alpha$, we compute:
\begin{equation}
\label{eq:partial_y}
\frac{\partial F_\alpha}{\partial y} (M) = \alpha (y_M+1) \left( x_M^2+(y_M+1)^2 \right)^{\frac{\alpha}{2}-1} - \alpha (y_M-1) \left( x_M^2+(y_M-1)^2 \right)^{\frac{\alpha}{2}-1}.
\end{equation}
Let us investigate it in the case $\alpha<1$. When $x_M = 0$ and $y_M \neq 1$, this reduces to
\[\alpha(\sgn(y_M+1) |y_M+1|^{\alpha-1} - \sgn(y_M-1) |y_M-1|^{\alpha-1}).\]
A study of the function $t \mapsto \sgn(t)|t|^{\alpha-1}$ shows that, for $\alpha \in (0,1)$, this quantity is positive when $y_M<1$ and negative when $y_M>1$ (and conversely for $\alpha<0$).

Finally, we treat the case $\alpha>1$. Using the expression \eqref{eq:partial_y}, in order to prove that $\frac{\partial F_\alpha}{\partial y} (M)>0$, we only have to prove that $g : u \rightarrow u \left( x_M^2+u^2 \right)^{\alpha/2-1}$ is increasing for any $x_M\geq 0$. For $x_M=0$ the previous argument applies. For $x_M>0$, $g$ is differentiable and, for $u \in \R$, $g'(u) = [x_M^2+(\alpha-1)u^2] \left( x_M^2+u^2 \right)^{\alpha/2-2} >0$. Hence, $g$ is increasing and the result follows.
\end{proof}

The study of the function $F_\alpha$ allows one to prove that the curve $C(\alpha,\lambda)$ is obtained from the graph of a real function:

\begin{corollary}
  \label{cor:1}
Let $\alpha \in \R\setminus\{0,1,2\}$ and $\lambda \in \R^*$. We assume that $\alpha \lambda > 0$, which ensures that the curve $C(\alpha,\lambda)$ is in the upper half-plane.

For $\alpha<1$, the curve $C(\alpha,\lambda)$ is bounded. If the curve is nonempty, there exists $0<y_-<1<y_+$ such that $C(\alpha,\lambda) \cap (AB) = \left\{ (0,y_-), (0,y_+) \right\}$. For any $y \in [y_-,y_+]$, there exists a unique $g_\alpha(y) \geq 0$ such that $(g_\alpha(y),y)\in C(\alpha,\lambda)$. Furthermore, the function
$g_\alpha$ is $C^1$ on $(y_-,y_+)$.

For $\alpha>1$, the curve $C(\alpha,\lambda)$ is unbounded and, for any $x \in \R$, there exists a unique $f_\alpha(x) \in \R$ such that $(x,f_\alpha(x))\in C(\alpha,\lambda)$. The function $f_\alpha$ is
$C^1$ on $\R$. Moreover, for all $x\in\R, f_\alpha(x)>0$, and on the interval $[0,\infty)$, the function $f_\alpha$ is increasing for $1<\alpha<2$, decreasing for $\alpha>2$.
\end{corollary}

\begin{proof}
The (un)boundedness of the curve follows from the limits of Lemma~\ref{lemma:study_F}, while the existence of $g_\alpha$, $f_\alpha$ follows from the intermediate value theorem. In addition, by the implicit function theorem, all these functions are $C^1$. From the definition of the construction curve, we get that, for any $x \in \R$:
\begin{align*}
\frac{\partial F_\alpha}{\partial x} (x,f_\alpha(x)) + f_\alpha'(x) \frac{\partial F_\alpha}{\partial y} (x,f_\alpha(x)) = 0.
\end{align*}
By Lemma~\ref{lemma:study_F}, for all $x \in \R$, $\frac{\partial F_\alpha}{\partial x} (x,f_\alpha(x)) > 0$; moreover $\frac{\partial F_\alpha}{\partial y} (x,f_\alpha(x)) >0$ if $\alpha > 2$ and $\frac{\partial F_\alpha}{\partial y} (x,f_\alpha(x)) <0$ if $1 < \alpha < 2$. The result follows.
\end{proof}

\begin{corollary}
  \label{cor:connected}
$C(\alpha, \lambda)$ is a connected curve.
\end{corollary}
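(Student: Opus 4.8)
\emph{Proof strategy.} The plan is to read off connectedness from the two parametrizations of $C(\alpha,\lambda)$ established in Corollary~\ref{cor:1}, treating the regimes $\alpha>1$ and $\alpha<1$ separately. First I would reduce to the case $\alpha\lambda>0$: when $\lambda=0$ the curve is the line $P(AB)$, hence connected, and the case $\alpha\lambda<0$ follows from the case $\alpha\lambda>0$ via the reflection across $P(AB)$ of Lemma~\ref{lemma:sym}, which preserves connectedness. So from now on $C(\alpha,\lambda)$ lies in the open upper half-plane.

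For $\alpha>1$ there is essentially nothing to prove: Corollary~\ref{cor:1} gives $C(\alpha,\lambda)=\{(x,f_\alpha(x)):x\in\R\}$ with $f_\alpha$ continuous on $\R$, so $C(\alpha,\lambda)$ is the continuous image of the connected set $\R$ under $x\mapsto(x,f_\alpha(x))$, hence connected.

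For $\alpha<1$ with $\alpha\neq 0$ the curve is bounded and, setting aside the trivial cases where it is empty or a single point, Corollary~\ref{cor:1} provides reals $0<y_-<1<y_+$ with $C(\alpha,\lambda)\cap(AB)=\{(0,y_-),(0,y_+)\}$ and a function $g_\alpha$, of class $C^1$ on $(y_-,y_+)$, such that $(g_\alpha(y),y)$ is the unique curve point with ordinate $y$ and nonnegative abscissa. I would then establish three facts. First, $g_\alpha(y_-)=g_\alpha(y_+)=0$, which is immediate from uniqueness since $(0,y_\pm)\in C(\alpha,\lambda)$. Second, $g_\alpha$ is continuous on the whole closed interval $[y_-,y_+]$: the level set $C(\alpha,\lambda)$ is compact --- bounded by Corollary~\ref{cor:1}, and closed because $F_\alpha$ is continuous on $\R^2\setminus\{A,B\}$ and blows up at $A$ and $B$ (indeed it is continuous on all of $\R^2$ when $0<\alpha<1$, and in the non-degenerate case neither $A$ nor $B$ lies on the curve) --- so any subsequential limit of $(g_\alpha(y_n),y_n)$ with $y_n\to y_\pm$ is a curve point with ordinate $y_\pm$ and nonnegative abscissa, hence equals $(0,y_\pm)$ by uniqueness, giving $g_\alpha(y_n)\to 0$. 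Third, $C(\alpha,\lambda)=\Gamma^+\cup\Gamma^-$, where $\Gamma^\pm=\{(\pm g_\alpha(y),y):y\in[y_-,y_+]\}$: the inclusion $\supseteq$ is the symmetry of $F_\alpha$ across $(AB)$ from Lemma~\ref{lemma:sym}, and $\subseteq$ follows from Lemma~\ref{lemma:study_F}, since along each horizontal line $F_\alpha(\cdot,y)$ runs strictly monotonically from $F_\alpha(0,y)$ to $0$, so a curve point with positive abscissa forces $F_\alpha(0,y)$ strictly past $\lambda$, which by the profile of $y\mapsto F_\alpha(0,y)$ on $[0,\infty)$ (vanishing at $0$, monotone up to $y=1$, monotone thereafter, tending to $0$ at infinity) means $y\in(y_-,y_+)$, and then the abscissa equals $g_\alpha(y)$. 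With these three facts in hand, $\Gamma^+$ and $\Gamma^-$ are each connected as continuous images of an interval (using the second fact), and they share the point $(0,y_-)$ (by the first), so the union $C(\alpha,\lambda)=\Gamma^+\cup\Gamma^-$ is connected.

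The main obstacle is the $\alpha<1$ case, specifically showing that the graph of $g_\alpha$ --- a priori known to be $C^1$ only on the open interval --- closes up continuously onto the two axis points $(0,y_\pm)$, and that no piece of the level set escapes the vertical strip $\{y_-\le y\le y_+\}$. Both points hinge on the compactness of the level set together with the monotonicity and asymptotic information gathered in Lemma~\ref{lemma:study_F}; once they are secured the conclusion is purely topological, and the $\alpha>1$ case is immediate.
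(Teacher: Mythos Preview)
Your proof is correct and, for the case $\alpha>1$, essentially identical to the paper's: both rely on the graph parametrization $x\mapsto(x,f_\alpha(x))$ from Corollary~\ref{cor:1}, which is exactly what the paper's one-line proof is invoking via the implicit function theorem and the inequality $\partial_y F_\alpha>0$.

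Where you differ is in the regime $\alpha<1$. The paper's proof as written only uses $\partial_y F_\alpha>0$, which Lemma~\ref{lemma:study_F} establishes solely in case~(iii), i.e.\ for $\alpha>1$; so the paper's argument does not actually cover $\alpha<1$ (and in fact the corollary is only invoked later for $\alpha>1$, in Theorem~\ref{theo:>1}). You take the statement at face value and supply a genuine argument for $\alpha<1$: the curve in the closed half-plane $\{x\ge 0\}$ is the graph of $g_\alpha$ over $[y_-,y_+]$, you extend continuity of $g_\alpha$ to the endpoints via compactness of the level set and uniqueness of the intersection with each horizontal line, and then glue the two symmetric halves at the axis points $(0,y_\pm)$. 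The compactness step and the exclusion of curve points with $y\notin[y_-,y_+]$ are both handled correctly using the horizontal monotonicity of $F_\alpha$ and the unimodal profile of $y\mapsto F_\alpha(0,y)$ drawn from Lemma~\ref{lemma:study_F}. In short, your approach agrees with the paper's where the paper is complete, and fills in the case the paper's proof leaves implicit.
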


\begin{proof}
By Lemma~\ref{lemma:study_F}, for any $(x,y) \in \R^2$, $\frac{\partial F}{\partial y} (x,y)>0$. We can therefore use the implicit function theorem along with Corollary~\ref{cor:1} to get the result.
\end{proof}

We now investigate the asymptotic behaviour of the function $f_\alpha$ at $+\infty$ (and thus the asymptotic direction of the associated construction curve). The following states that, in the case $1<\alpha<2$, the construction curve admits the perpendicular bisector of $[AB]$ as asymptotic direction, and in the case $\alpha>2$, it is the actual asymptote of the curve.

\begin{lemma}
\label{lemma:asymp}
Let $\alpha\in(1,\infty)\setminus\{2\}$, and $\lambda \neq 0$. As $|x| \rightarrow \infty$, $f_\alpha(x)$ has the following asymptotic behaviour :
\begin{align*}
f_\alpha(x) \sim \frac{\lambda}{2\alpha} |x|^{2-\alpha}.
\end{align*}
\end{lemma}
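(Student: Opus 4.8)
The plan is to extract the asymptotic behaviour directly from the defining equation $MA^\alpha - MB^\alpha = \lambda$ at a point $M = (x, f_\alpha(x))$, using that $f_\alpha(x)$ stays bounded near the asymptotic direction. First I would record the exact expressions $MA^2 = x^2 + (f_\alpha(x)+1)^2$ and $MB^2 = x^2 + (f_\alpha(x)-1)^2$, so that $MA^2 - MB^2 = 4 f_\alpha(x)$ exactly, regardless of $\alpha$. The key structural fact I want to exploit is that the curve stays close to the perpendicular bisector $(AB)$, i.e. $f_\alpha(x) \to 0$ as $|x| \to \infty$ when $1 < \alpha < 2$, whereas for $\alpha > 2$ Lemma~\ref{lemma:asymp} itself (once proved) shows $f_\alpha(x) \to 0$ as well since $|x|^{2-\alpha} \to 0$; so in fact I should first establish, independently, that $f_\alpha(x) \to 0$ as $|x| \to \infty$ for all $\alpha \in (1,\infty)\setminus\{2\}$. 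This follows from Corollary~\ref{cor:1}: for $1<\alpha<2$ the function $f_\alpha$ is increasing on $[0,\infty)$ and positive, hence has a limit $L \in (0,\infty]$; if $L$ were positive the mean-value estimate from Lemma~\ref{lemma:study_F} would force $MA^\alpha - MB^\alpha \to 0$, contradicting the constant value $\lambda \neq 0$ unless $L = 0$ (and $L=\infty$ is excluded by boundedness of $F_\alpha$-type estimates); for $\alpha>2$, $f_\alpha$ is decreasing and positive on $[0,\infty)$, so again it has a finite limit, which must be $0$ by the same contradiction.

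Once $f_\alpha(x) \to 0$ is in hand, I would set $r = r(x) := MB = \sqrt{x^2 + (f_\alpha(x)-1)^2}$, which satisfies $r(x) \sim |x|$ as $|x|\to\infty$, and write $MA = \sqrt{r^2 + 4 f_\alpha(x)}$ using $MA^2 - MB^2 = 4f_\alpha(x)$. Then
\begin{align*}
\lambda = MA^\alpha - MB^\alpha = r^\alpha\left[\left(1 + \frac{4 f_\alpha(x)}{r^2}\right)^{\alpha/2} - 1\right].
\end{align*}
Since $f_\alpha(x)/r^2 \to 0$, I can Taylor-expand the bracket: $(1+u)^{\alpha/2} - 1 = \tfrac{\alpha}{2} u + O(u^2)$, giving
\begin{align*}
\lambda = r^\alpha \left[\frac{\alpha}{2}\cdot\frac{4 f_\alpha(x)}{r^2} + O\!\left(\frac{f_\alpha(x)^2}{r^4}\right)\right] = 2\alpha\, r^{\alpha-2} f_\alpha(x)\,(1 + o(1)).
\end{align*}
Solving, $f_\alpha(x) = \tfrac{\lambda}{2\alpha} r^{2-\alpha}(1+o(1))$, and since $r \sim |x|$ this yields $f_\alpha(x) \sim \tfrac{\lambda}{2\alpha}|x|^{2-\alpha}$, as claimed. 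One should double-check that the error term genuinely is negligible: the correction is of relative size $O(f_\alpha(x)/r^2) = O(|x|^{-\alpha})$, which $\to 0$, so this is fine; and $r \sim |x|$ because $f_\alpha(x) - 1 \to -1$ is bounded.

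The main obstacle, and the only place requiring care, is the bootstrap at the start: one needs $f_\alpha(x)\to 0$ before the expansion is legitimate, and the cleanest way to get it is the monotonicity-plus-limit argument above combined with the observation that a positive limit $L$ is incompatible with $MA^\alpha - MB^\alpha \equiv \lambda \neq 0$. For $1<\alpha<2$ this incompatibility is: if $f_\alpha(x)\to L>0$ then $MA, MB \to \infty$ with $MA - MB \to 0$ (since $MA^2-MB^2 = 4L$ stays bounded while $MA+MB\to\infty$), and then $MA^\alpha - MB^\alpha = \alpha\xi^{\alpha-1}(MA-MB)$ for some $\xi\in[MB,MA]$ with $\xi^{\alpha-1}\to\infty$ but $MA - MB \to 0$ — here one must be slightly careful that the product does not converge to a nonzero constant, but in fact $MA - MB = 4L/(MA+MB) \sim 2L/|x|$ while $\xi^{\alpha-1}\sim |x|^{\alpha-1}$, so the product is $\sim 2\alpha L |x|^{\alpha-2}\to 0$ since $\alpha<2$, contradicting $\lambda\neq 0$. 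For $\alpha>2$ the analogous computation gives the product $\sim 2\alpha L|x|^{\alpha-2}\to\infty$, again contradicting the constant value $\lambda$; hence $L=0$ in both cases. With that settled, the rest is the routine binomial expansion above. (In fact this bootstrap computation already essentially contains the asymptotic, so one could merge the two steps; I would keep them separate for clarity.)
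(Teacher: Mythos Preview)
Your bootstrap step contains a genuine error in the range $1<\alpha<2$. You claim $f_\alpha(x)\to 0$ for all $\alpha\in(1,\infty)\setminus\{2\}$, but this is false when $1<\alpha<2$: the very asymptotic you are trying to prove, $f_\alpha(x)\sim \tfrac{\lambda}{2\alpha}|x|^{2-\alpha}$, has exponent $2-\alpha>0$, so $f_\alpha(x)\to\infty$. Your contradiction argument correctly excludes a finite positive limit $L$, but your dismissal of $L=\infty$ (``excluded by boundedness of $F_\alpha$-type estimates'') is vague and wrong. Worse, by Corollary~\ref{cor:1} the function $f_\alpha$ is \emph{increasing} and positive on $[0,\infty)$ for $1<\alpha<2$, so its limit is at least $f_\alpha(0)>0$; the conclusion $L=0$ is impossible. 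Your argument for $\alpha>2$ is fine.

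Your binomial-expansion idea is salvageable, but you need a different a priori bound for $1<\alpha<2$: what is actually required is $f_\alpha(x)=o(|x|)$ (so that $r\sim|x|$) and $f_\alpha(x)/r^2\to 0$ (so the expansion is valid). The paper instead applies the mean value theorem to $v\mapsto (x^2+v^2)^{\alpha/2}$ between $v=f_\alpha(x)-1$ and $v=f_\alpha(x)+1$, obtaining $\lambda = 2\alpha u_x(x^2+u_x^2)^{\alpha/2-1}$ for some $u_x\in(f_\alpha(x)-1,f_\alpha(x)+1)$; from this single identity it reads off directly that $u_x/x$ is bounded (otherwise the right-hand side diverges), and then extracts the asymptotic. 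This bypasses the need for your faulty bootstrap.
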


\begin{proof}
By the symmetry properties of $C(\alpha,\lambda)$, we only have to focus on the case $x \rightarrow +\infty$. For any $x>0$, by the mean value theorem, there exists an $u_x \in (f_\alpha(x)-1,f_\alpha(x)+1)$ such that
\begin{align*}
\lambda &= \left( x^2+(f_{\alpha}(x)+1)^2 \right)^{\alpha/2} - \left( x^2+(f_{\alpha}(x)-1)^2 \right)^{\alpha/2} \\
 &= 2 \alpha u_x (x^2+u_x^2)^{\alpha/2-1}\\
&= 2 \alpha u_x^{\alpha-1} \left( \frac{u_x}{x} \right)^{2-\alpha} \left(1+\left( \frac{u_x}{x} \right)^2 \right)^{\alpha/2-1}
\end{align*}

For $1<\alpha<2$, this implies that $\frac{u_x}{x}$ goes to $0$ as $x \rightarrow \infty$; otherwise the right-hand term would take arbitrarily large values. From this observation, we get that $u_x^{\alpha-1} \left( \frac{u_x}{x} \right)^{2-\alpha} = \lambda (1+o(1))$ and therefore that $u_x \sim \frac{\lambda}{2\alpha} x^{2-\alpha}$.
Since $u_x \in (f_{\alpha}(x)-1,f_{\alpha}(x)+1)$, the same holds for $f_{\alpha}(x)$.

For $\alpha>2$, as $x \rightarrow \infty$, $u_x$ has to converge to $0$, and therefore $f_\alpha(x)$ is bounded. Asymptotically, as $x \rightarrow \infty$,
\begin{align*}
\lambda &= \left( x^2+(f_{\alpha}(x)+1)^2 \right)^{\alpha/2} - \left( x^2+(f_{\alpha}(x)-1)^2 \right)^{\alpha/2} \\
& \sim \frac{\alpha}{2} x^{\alpha-2} \left[ (f_{\alpha}(x)+1)^2-(f_{\alpha}(x)-1)^2 \right] \\
& \sim 2\alpha f_{\alpha}(x) x^{\alpha-2},
\end{align*}
and the result follows.
\end{proof}

\subsection{Flip property for $\alpha > 1$}

We now go back to the study of the flip property of $\alpha$-realizations. We get the following result in the case $\alpha>1$:

\begin{theorem}
  \label{theo:>1}
For any real number $\alpha > 1$, the set of $\alpha$-realizations satisfies the flip property.
\end{theorem}

\begin{proof}
Fix $\alpha>1$ and consider six distinct points $A_1,\dots,A_6$ such
that $A_1,A_3,A_5$ (resp. $A_2,A_4,A_6$) are not aligned. Suppose that there is a point $A_0$ such that the quadrilaterals $A_1A_2A_3A_0$, $A_3A_4A_5A_0$ and $A_5A_6A_1A_0$ are all $\alpha$-quads. By Remark~\ref{rem:2curves} it suffices to show that the $\alpha$-construction curves of the hexagon $\calC_1$ and $\calC_3$ have a common point. By Lemma~\ref{lemma:asymp}, $\calC_1$ admits $P(A_2A_6)$ as an asymptotic direction and $\calC_3$ admits $P(A_2A_4)$ as an asymptotic direction. These two lines are not parallel, since $A_2,A_4,A_6$ are not aligned. Therefore, considering the behaviour of $\calC_1$ and $\calC_3$ at infinity and using Corollary~\ref{cor:connected}, these two construction curves have to intersect at least once.
\end{proof}

\begin{remark}
It may be interesting to bound the number of intersection points of two construction curves having a focus in common, in order to control the number of potential points that could appear. Based on simulations, we expect that for $\alpha<1$ there are at most two points of intersection, and for $\alpha>1$ there are at most three (see the examples of Figure~\ref{fig:ex_intersections}). It may be possible to use an analysis similar to that of Lemma~\ref{lemma:study_F} to get properties of convexity of the construction curves and deduce the previous bounds.
\end{remark}

\section{The space of $\alpha$-quads and $f$-quadrilaterals}
\label{sec:space}

In this last section, we provide different geometric characterizations of $\alpha$-quads. It appears that one of them holds in the broader context of $f$-quadrilaterals (see Definition \ref{def:fquad}). The first characterization of $\alpha$-quads is related to the existence in the quadrilateral of a so-called \textit{extremal pair}.

\begin{definition}
A pair of opposite sides of a quadrilateral is called an \emph{extremal pair} if these two sides achieve both the maximum and the minimum of the side lengths of the quadrilateral.
\end{definition}

\begin{proposition}
\label{prop:union}
A quadrilateral is an $\alpha$-quad for some $\alpha\in\overline{\R}$ if and only if it has an extremal pair.
\end{proposition}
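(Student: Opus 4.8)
The plan is to handle both implications at once through the $\alpha$-power mean of two positive reals $x,y$: set $M_\alpha(x,y)=\bigl((x^\alpha+y^\alpha)/2\bigr)^{1/\alpha}$ for $\alpha\in\R^*$, $M_0(x,y)=\sqrt{xy}$, $M_{+\infty}(x,y)=\max(x,y)$ and $M_{-\infty}(x,y)=\min(x,y)$. Writing $a=AB$, $b=BC$, $c=CD$, $d=DA$ for the side-lengths, the first observation is that $ABCD$ is an $\alpha$-quad if and only if $M_\alpha(a,c)=M_\alpha(b,d)$: for $\alpha\in\{0,+\infty,-\infty\}$ this is the defining condition verbatim, while for $\alpha\in\R^*$ it follows from the injectivity of $t\mapsto t^{1/\alpha}$ on $(0,\infty)$, which turns $M_\alpha(a,c)=M_\alpha(b,d)$ into $a^\alpha+c^\alpha=b^\alpha+d^\alpha$, i.e.\ $f_\alpha(a,c)=f_\alpha(b,d)$. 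Two standard facts about power means will be used: (i) $\alpha\mapsto M_\alpha(x,y)$ is continuous on the compact interval $\overline{\R}=[-\infty,+\infty]$; (ii) for $\alpha\in\R^*\cup\{0\}$ the function $M_\alpha$ is strictly increasing in each of its two arguments (for $\alpha\in\R^*$ a direct differentiation works, the chain-rule factor $\alpha$ cancelling the prefactor $1/\alpha$, so no case distinction on the sign of $\alpha$ is needed).

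For the direction ``extremal pair $\Rightarrow$ $\alpha$-quad for some $\alpha$'', after a cyclic relabelling we may assume the extremal pair is $\{a,c\}$, so that $\{\min(a,c),\max(a,c)\}=\{m,M\}$, where $m$ and $M$ denote the minimum and maximum of $\{a,b,c,d\}$. The function $P(\alpha):=M_\alpha(a,c)-M_\alpha(b,d)$ is continuous on $\overline{\R}$ by (i), and at the endpoints $P(-\infty)=\min(a,c)-\min(b,d)=m-\min(b,d)\le 0$ and $P(+\infty)=\max(a,c)-\max(b,d)=M-\max(b,d)\ge 0$. By the intermediate value theorem there is some $\alpha_0\in\overline{\R}$ with $P(\alpha_0)=0$, and by the first observation $ABCD$ is then an $\alpha_0$-quad.

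The converse is proved by contraposition: assuming $ABCD$ has no extremal pair, I will show $M_\alpha(a,c)>M_\alpha(b,d)$ strictly, for every $\alpha\in\overline{\R}$. Since the cyclic rotations and the reversal of the vertex labelling realize the full dihedral group of symmetries of the cyclic sequence of four sides, we may relabel so that $a=AB$ has maximal length $M$ and $b\le d$. Not all four sides are equal (otherwise every pair would be extremal), so $M>m$. As $\{a,c\}$ is not extremal but already contains the maximum $a$, it cannot contain the minimum, so $c>m$; hence $m$ is attained inside $\{b,d\}$, and $b\le d$ forces $b=m$. As $\{b,d\}$ is not extremal but already contains the minimum $b$, it cannot contain the maximum, so $d<M=a$. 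We thus have the two strict inequalities $a>d$ and $c>b$. For $\alpha\in\R^*\cup\{0\}$, fact (ii) gives $M_\alpha(a,c)>M_\alpha(d,c)>M_\alpha(d,b)=M_\alpha(b,d)$; for $\alpha=+\infty$, $\max(a,c)=a=M>\max(b,d)$ since $b,d<M$; and for $\alpha=-\infty$, $\min(a,c)=c>m=\min(b,d)$. In every case $M_\alpha(a,c)\neq M_\alpha(b,d)$, so $ABCD$ is an $\alpha$-quad for no $\alpha\in\overline{\R}$.

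Most of this is routine: the two facts (i)--(ii) about power means are classical, and the dihedral relabelling is bookkeeping. The one point that deserves attention is unpacking the hypothesis ``no extremal pair'' correctly into the two strict inequalities $a>d$, $c>b$ sitting in the right cyclic positions, together with noticing that the endpoint exponents $\alpha\in\{0,\pm\infty\}$ --- where $M_\alpha$ is only weakly monotone --- must be treated by hand in the converse, as done above; no serious obstacle is anticipated.
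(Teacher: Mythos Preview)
Your proof is correct and follows essentially the same approach as the paper's: an intermediate value argument in the parameter $\alpha$ for the existence direction, and a direct inequality check for the converse. Your packaging via power means $M_\alpha$ is a bit cleaner in that it treats the compactified line $\overline{\R}$ uniformly, whereas the paper first disposes of the boundary cases (where the quad is already a $\pm\infty$-quad) and then applies the IVT on $\R$ to $g(\alpha)=(\ell_1^\alpha+\ell_3^\alpha-\ell_2^\alpha-\ell_4^\alpha)/\alpha$; one small slip in your closing commentary is that $M_0(x,y)=\sqrt{xy}$ is in fact strictly monotone in each argument, as your proof body correctly uses.
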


\begin{proof}
Let $Q$ be a quadrilateral whose side lengths are denoted by $\ell_1,\ell_2,\ell_3,\ell_4$ in cyclic order (starting from any of them). Assume that $Q$ has no extremal pair. Then, without loss of generality, one may assume that $\ell_1$ is the maximal length and $\ell_4$ the minimal length, and they are distinct. Since $(\ell_1,\ell_3)$ is not an extremal pair, $\ell_3$ does not achieve the minimum. Thus, $\ell_3>\ell_4$ and $Q$ is not a $-\infty$-quad. Similarly, $\ell_2$ does not achieve the maximum; thus, $\ell_2<\ell_1$ and $Q$ is not a $+\infty$-quad. By these inequalities, $\ell_1^\alpha+\ell_3^\alpha>\ell_2^\alpha+\ell_4^\alpha$ if $\alpha>0$ and $\ell_1^\alpha+\ell_3^\alpha<\ell_2^\alpha+\ell_4^\alpha$ if $\alpha<0$ (and in any of these cases $Q$ is not an $\alpha$-quad). Furthermore, $\ell_1 \ell_3>\ell_2 \ell_4$ and $Q$ is not a $0$-quad. Hence, $Q$ is not an $\alpha$-quad for any $\alpha\in\overline{\R}$.

Conversely, suppose that $Q$ has an extremal pair. We can assume that $\ell_1$ is the maximal length, $\ell_3$ the minimal length, and that $\ell_2 \leq \ell_4$ (up to a possible mirror symmetry). If $\ell_2=\ell_3$ or if $\ell_4=\ell_1$ then $Q$ is respectively a $-\infty$-quad or a $+\infty$-quad. Hence, we can assume that $\ell_3 < \ell_2 \leq \ell_4 < \ell_1$. Consider now the function $g$ on $\R^*$ defined as
\begin{equation}
g: \alpha \mapsto \frac{\ell_1^{\alpha} + \ell_3^{\alpha} - \ell_2^{\alpha} - \ell_4^{\alpha}}{\alpha}.
\end{equation}
The function $g$ can be extended to a continuous function on $\R$ by setting $g(0)=\ln\left(\frac{\ell_1 \ell_3}{\ell_2 \ell_4}\right)$. By the previous inequalities, for $\alpha\to -\infty$, $g(\alpha) \sim \frac{\ell_3^{\alpha}}{\alpha} <0$ and for $\alpha\to +\infty$, $g(\alpha) \sim \frac{\ell_1^{\alpha}}{\alpha} >0$. Hence, by the intermediate value theorem, there exists an $\alpha\in\R$ such that $g(\alpha)=0$ and $Q$ is an $\alpha$-quad.
\end{proof}

We now characterize quadrilaterals that are $\alpha$-quads for at least two distinct values of $\alpha$.

\begin{definition}
A quadrilateral $ABCD$ is said to be a \emph{kite} if its side lengths satisfy $$\{AB,CD\}=\{BC,DA\}.$$
\end{definition}

Remark in particular that a kite is an $\alpha$-quad for all values of $\alpha \in \overline{R}$.

\begin{proposition}
\label{prop:intersection}
Let $\alpha \neq \alpha' \in \overline{\R}$, and let $Q$ be a quad which is an $\alpha$-quad and an $\alpha'$-quad at the same time. Then $Q$ is a kite.
\end{proposition}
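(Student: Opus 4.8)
The plan is to turn the statement into a question about the number of real zeros of an exponential sum. Write $\ell_1,\ell_2,\ell_3,\ell_4$ for the side lengths of $Q$ in cyclic order, so that the opposite pairs are $\{\ell_1,\ell_3\}$ and $\{\ell_2,\ell_4\}$; with this convention $Q$ is a kite exactly when $\{\ell_1,\ell_3\}=\{\ell_2,\ell_4\}$ as multisets. Suppose for contradiction that $Q$ is an $\alpha$-quad and an $\alpha'$-quad with $\alpha\neq\alpha'\in\overline{\R}$ but is not a kite.

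First I would dispose of the cases where $\alpha$ or $\alpha'$ lies in $\{-\infty,0,+\infty\}$. In each such case the two defining relations combine into two independent constraints on the \emph{two}-element multisets $\{\ell_1,\ell_3\}$ and $\{\ell_2,\ell_4\}$, and two positive reals are determined as a multiset by any two of: their maximum, their minimum, their product, their $\beta$-th power sum. For instance, being simultaneously a $+\infty$- and a $-\infty$-quad forces the two multisets to share both maximum and minimum; being a $0$-quad and a finite $\alpha$-quad forces $\{\ell_1^\alpha,\ell_3^\alpha\}$ and $\{\ell_2^\alpha,\ell_4^\alpha\}$ to have equal sum and product; being a $\pm\infty$-quad and a finite $\alpha$-quad forces the two multisets to share an extreme value, after removing which the two remaining lengths have equal $\alpha$-th powers. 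Each of these contradicts ``$Q$ is not a kite'', so from now on $\alpha,\alpha'\in\R^*$.

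Next, set $\phi(\beta)=\ell_1^\beta+\ell_3^\beta-\ell_2^\beta-\ell_4^\beta$ for $\beta\in\R$. Then $\phi(0)=0$ and, by hypothesis, $\phi(\alpha)=\phi(\alpha')=0$, so $\phi$ has at least three distinct real zeros. Write $\phi(\beta)=\sum_t c_t\,e^{\beta\ln t}$, the sum over the distinct values $t$ attained by the $\ell_i$, where $c_t$ is the number of $i\in\{1,3\}$ with $\ell_i=t$ minus the number of $i\in\{2,4\}$ with $\ell_i=t$; since the functions $e^{\beta\ln t}$ are linearly independent, $Q$ is not a kite precisely when some $c_t\neq0$, i.e.\ $\phi\not\equiv0$. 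Let $S'=\{t:c_t\neq0\}\neq\varnothing$ and recall the classical bound (Pólya's form of Descartes' rule for exponential sums, also provable by iterating Rolle's theorem): a nonzero exponential sum $\sum_j c_j e^{\mu_j\beta}$ with distinct real $\mu_j$ has at most as many real zeros as there are sign changes in the coefficient sequence written in order of the $\mu_j$, in particular at most $|S'|-1$.

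It then remains to rule out $|S'|\in\{1,2,3,4\}$. If $|S'|=1$, $\phi$ never vanishes, contradicting $\phi(0)=0$. If $|S'|\in\{2,3\}$, $\phi$ has at most $|S'|-1\le2$ zeros, contradicting the three found above. If $|S'|=4$, the four lengths are pairwise distinct, each $c_t=\pm1$, with two $+1$'s (for $\ell_1,\ell_3$) and two $-1$'s; since $Q$ is an $\alpha$-quad it has an extremal pair by Proposition~\ref{prop:union}, so the longest and shortest sides are opposite and carry the same coefficient, forcing the coefficient pattern (in order of decreasing side length) to be $(\varepsilon,-\varepsilon,-\varepsilon,\varepsilon)$, which has only two sign changes, hence at most two zeros — again a contradiction. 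Therefore $Q$ is a kite. I expect the $|S'|=4$ case to be the only delicate point: there the raw bound $|S'|-1=3$ is not enough, and one must invoke the extremal-pair structure (equivalently, observe that the alternating sign pattern would make $\phi$ vanish at no finite nonzero $\beta$) to sharpen it to $2$.
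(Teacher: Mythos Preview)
Your proof is correct and takes a genuinely different route from the paper's. In the main case $\alpha,\alpha'\in\R^*$, the paper substitutes $\ell_i\mapsto\ell_i^{\alpha'}$ to reduce to $\alpha'=1$, sets $s=\ell_1+\ell_3$, and observes that $x\mapsto x^{\alpha}+(s-x)^{\alpha}$ is strictly monotone on $[0,s/2]$ and symmetric about $s/2$, so the constraint $\ell_2^{\alpha}+(s-\ell_2)^{\alpha}=\ell_1^{\alpha}+\ell_3^{\alpha}$ forces $\ell_2\in\{\ell_1,\ell_3\}$. Your argument instead treats $\phi(\beta)=\ell_1^{\beta}+\ell_3^{\beta}-\ell_2^{\beta}-\ell_4^{\beta}$ as an exponential sum in $\beta$ and bounds its real zeros by the P\'olya--Laguerre sign-change rule, using the three zeros $0,\alpha,\alpha'$ to force a contradiction. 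The paper's method is more self-contained (a one-variable monotonicity argument, no external theorem); yours is more structural and makes transparent why the case of four distinct lengths needs the extremal-pair property of Proposition~\ref{prop:union}, which the paper's symmetry trick absorbs implicitly. Both proofs dispatch the boundary cases $\alpha\in\{-\infty,0,+\infty\}$ by the same kind of ad hoc pairwise reasoning.
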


This generalizes a result of \cite{Josefsson2} which claims that a quad which is both a $0$-quad and a $1$-quad is a kite.

\begin{proof}
Denote by $\ell_1,\ell_2,\ell_3,\ell_4$ the side lengths of $Q$ in cyclic order. Assume first that neither $\alpha$ nor $\alpha'$ take the values $-\infty,\infty$ or $0$. Then, up to replacing $\ell_i$ by $\ell_i^{\alpha'}$, one may assume that $\alpha'=1$. Write $s=\ell_1+\ell_3$ and $t=\ell_1^\alpha + \ell_3^\alpha$. Since $Q$ is a $1$-quad, we have $\ell_4=s-\ell_2$. Since $Q$ is also an $\alpha$-quad, we have
\begin{equation}
\label{eq:intermediatevalue}
\ell_2^\alpha + (s-\ell_2)^\alpha = t,
\end{equation}
and $0 \leq \ell_2 \leq s$. The function $x \mapsto x^\alpha + (s-x)^\alpha$ is strictly monotonic on $[0,s/2]$ symmetric with respect to $x=s/2$, so that the only solutions of equation~\eqref{eq:intermediatevalue} are $\ell_2=\ell_1$ and $\ell_2=\ell_3$. Hence $\{\ell_1,\ell_3\}=\{\ell_2,\ell_4\}$, and $Q$ is a kite.

Assume now that $\alpha=0$ and $\alpha'$ is finite. One may again assume that $\alpha'=1$, in which case the sums and products of each pair $\{l_1,l_3\}$ and $\{l_2,l_4\}$ are equal, and $Q$ is again a kite.
If $\alpha \in \{-\infty,\infty\}$ and $\alpha'$ is finite non-zero, one may again assume that $\alpha'=1$ and the conclusion follows easily. The case when $\alpha \in \{-\infty,\infty\}$ and $\alpha'=0$ is similar. Finally the case when $\{\alpha,\alpha'\}=\{-\infty,\infty\}$ is also easy.
\end{proof}

We finally provide another characterization, involving circumradii of
the four triangles delimited by the sides and diagonals of the
quad. This last characterization actually holds in a more generic setting, which we now introduce. In what follows, we say that a symmetric function of two variables $f:(\R_+)^2\rightarrow\R$ is \emph{homogeneous} if there exists $u\in\R$ such that for any $\lambda,x,y>0$, we have $f(\lambda x,\lambda y)=\lambda^u f(x,y)$.

\begin{definition}
\label{def:fquad}
Let $f:(\R_+)^2\rightarrow\R$ be a non-constant homogeneous symmetric function of two variables. A quadrilateral $ABCD$ is called an \emph{$f$-quadrilateral} if:
\begin{equation}
\label{eq:fquadlengths}
f(AB,CD)=f(BC,AD).
\end{equation}
\end{definition}

Multiplying $f$ by a non-zero scalar, or more generally post-composing it with a bijection, produces the same class of quads. As an example, for $\alpha \in \R^*$, $\alpha$-quads correspond to the function $f_\alpha: (x,y) \mapsto x^\alpha+y^\alpha$. By definition, $0$-quads, $+\infty$-quads an $-\infty$-quads are also $f$-quadrilaterals for some homogeneous function.

\begin{proposition}
\label{prop:circumrad}
Let $ABCD$ be a quad. Let $P$ denote the intersection point of its
diagonals and suppose that $P$ is distinct from $A,B,C,D$. We denote the respective circumradii of the triangles $ABP,BCP,CDP$ and $DAP$ by $R_1,R_2,R_3$ and $R_4$. Let $f:(\R_+)^2\rightarrow\R$ be a symmetric homogeneous function. The following are equivalent:
\begin{enumerate}
 \item $ABCD$ is an $f$-quadrilateral ;
 \item $f(R_1,R_3)=f(R_2,R_4)$.
\end{enumerate}
\end{proposition}

This result was already known for orthodiagonal quads and tangentials quads \cite{Josefsson} and our proof below is a straightforward extension of the proof of \cite[Theorem 9]{Josefsson}.

\begin{proof}
Let us denote the centers of the circumcircles of $ABP,BCP,CDP,DAP$ by respectively $O_1,O_2,O_3,O_4$. Since $AO_1B$ is isoceles in $O_1$, we have
\[
AB=2R_1\sin\frac{\widehat{AO_1B}}{2}.
\]
Since $P$ lies on the circle centered at $O_1$ and going through $A$ and $B$, we have that
\[
\frac{\widehat{AO_1B}}{2} \in \left\{\widehat{APB},\pi-\widehat{APB} \right\},
\]
hence
\begin{equation}
AB=2R_1\sin \widehat{APB}.
\end{equation}
Similarly we have
\begin{align}
BC&=2R_2\sin \widehat{BPC} \\
CD&=2R_3\sin \widehat{CPD} \\
DA&=2R_4\sin \widehat{DPA}
\end{align}
Observing that $\widehat{APB}=\widehat{CPD}=\pi-\widehat{BPC}=\pi-\widehat{DPA}$, we deduce that the quadruples $(AB,BC,CD,DA)$ and $(R_1,R_2,R_3,R_4)$ are proportional. Since $f$ is homogeneous, $f(AB,CD)=f(BC,DA)$ if and only if $f(R_1,R_3)=f(R_2,R_4)$. This concludes the proof.
\end{proof}

\begin{remark}
Let $h_1$, $h_2$, $h_3$ and $h_4$ be the altitudes through $P$ in the triangles $ABP$, $BCP$, $CDP$ and $DAP$. For $\alpha\in\{1,2\}$, $ABCD$ is an $\alpha$-quad if and only if
\[
h_1^{-\alpha} + h_3^{-\alpha} = h_2^{-\alpha} + h_4^{-\alpha},
\]
see \cite[Section 5]{Josefsson}. However this does not hold for general values of $\alpha$.
\end{remark}

We end this section with an open question.
\begin{question}
Apart from $f_\alpha$, are there homogeneous functions $f$ such that $f$-quadrilaterals satisfy one of the flip properties discussed in this paper?
\end{question}

\section*{Acknowledgements}

We are grateful to Dmitry Chelkak, Marianna Russkikh and Andrea
Sportiello for numerous fruitful discussions. We also thank Thomas
Blomme and Othmane Safsafi for discussions leading to the proof of
Propositions~\ref{prop:intersection} and
\ref{prop:circumrad}. We thank the referees for comments
leading to an improved exposition. PM and SR are partially supported by the Agence Nationale de la Recherche, Grant Number ANR-18-CE40-0033 (ANR DIMERS). SR also acknowledges the support of the Fondation Sciences Math\'ematiques de Paris.
PT acknowledges partial support from Agence Nationale de la Recherche, Grant Number ANR-14-CE25-0014 (ANR GRAAL).

\appendix

\section{Jacobi elliptic functions}
\label{sec:elliptic}

In this appendix we give a very brief presentation of elliptic
functions and formulas used in the paper, with variable names that are
not classical but fit the framework of our paper. For a more complete
introduction, we refer to \cite{Lawden, AbramowitzStegun}.

Consider a number $k \in [0,1)\cup i\R$, which we call \emph{elliptic
  modulus}. We first define the
\emph{incomplete integral of the first kind}:
\begin{equation}
  F(\theta,k) = \int_{0}^{\theta} \frac{dt}{\sqrt{1-k^2\sin^2t}}.
\end{equation}
The function $F(\cdot,k)$ is a bijection from $\R$ to $\R$, and we
denote its inverse by $\am(\cdot,k)$. Then the first two Jacobi
elliptic functions are defined for any $\tau\in \R$ by
\begin{equation}
  \begin{split}
    \cn(\tau,k) &= \cos(\am(\tau,k)), \\
    \sn(\tau,k) &= \sin(\am(\tau,k)).
  \end{split}
\end{equation}
These functions can be thought of as generalizations of usual cosine
and sine functions. For $k=0$, the function $F(\cdot,k)$ is clearly
the identity, hence $\cn$ and $\sn$ reduce to $\cos$ and
$\sin$. In general, they are periodic functions, of period $4K(k)$
where $K$ denotes the \emph{complete integral of the first kind}, also
named \emph{quarter-period}:
\begin{equation}
  K(k) = F\left(\frac{\pi}{2},k\right).
\end{equation}

Apart from $\cn$ and $\sn$, we also define
\begin{equation}
  \dn(\tau,k) = \sqrt{1-k^2\sn(\tau,k)}.
\end{equation}
Then, for any two distinct letters $p,q \in \{c,s,d,n\}$, we define a
function $pq$ as $\frac{pn}{qn}$, with the convention that $nn=1$. For instance,
\begin{equation}
  \begin{split}
    \nc(\tau,k) &= \frac{1}{\cn(\tau,k)}, \\
    \sc(\tau,k) &= \frac{\sn(\tau,k)}{\cn(\tau,k)},
  \end{split}
\end{equation}
etc.

The following formulas, which can
be found in \cite{AbramowitzStegun}, chapter 16.8, describe how these elliptic functions are affected by the change of variable $\tau\mapsto K(k)-\tau$. As is
customary, we omit the dependence in $k$ to simplify notations. Letting $k'=\sqrt{1-k^2}$, one has:
\begin{equation}
  \label{eq:ell_change_var}
  \begin{split}
    \cn(K-\tau) &= k' \sd(\tau), \\
    \sn(K-\tau) &= \cd(\tau), \\
    \dn(K-\tau) &= k' \nd(\tau).
  \end{split}
\end{equation}
The effect of the same change of variable on any other function $pq$ can
be deduced from these three.

We also make use of an addition identity, that can be found as
formula~32(i) in Chapter~2 of \cite{Lawden}. For any $\tau,\tau' \in \R,$
\begin{equation}
  \label{eq:ell_add}
  \cn({\tau}+{\tau'}) +
  \sn({\tau})\sn({\tau'})
  \dn({\tau}+{\tau'}) -
  \cn({\tau})\cn({\tau'}) = 0.
\end{equation}

\label{Bibliography}
\bibliographystyle{plain}
\bibliography{bibliographie}
\end{document}